\documentclass[11pt,a4paper]{amsart}
\usepackage[top=1in, bottom=1in, left=1in, right=1in]{geometry}
\usepackage{mathrsfs}
\usepackage{amsfonts}
\usepackage{amsmath}
\usepackage{amssymb}
\usepackage{dsfont}
\usepackage{physics}
\numberwithin{equation}{section}
\usepackage{graphicx}
\usepackage{times}
\usepackage{colonequals}
\usepackage[usenames,dvipsnames]{color}
\usepackage{comment}
\usepackage{mathtools}
\usepackage{bbm}
\usepackage{bm}
\usepackage{esvect}
\usepackage{cite}
\usepackage{hyperref}
\hypersetup{
colorlinks = true,
linkcolor={black},
urlcolor={blue},
citecolor={blue},    
urlcolor = {blue},
citebordercolor = {0.33 .58 0.33},
 linkbordercolor = {0.99 .28 0.23},
 breaklinks=true}

\renewcommand{\P}{\mathbb{P}}

\makeatletter
\renewcommand{\pmod}[1]{\allowbreak\mkern7mu({\operator@font mod}\,\,#1)}
\makeatother

\renewcommand{\O}{\mathcal{O}}
\newcommand{\PR}{\mathbb{P}} 
\newcommand{\kommentar}[1]{}

\newcommand{\N}{\mathbb{N}}
\newcommand{\Z}{\mathbb{Z}}

\newtheorem{thm}{Theorem}[section]
\newtheorem{defn}[thm]{Definition}
\newtheorem{prop}[thm]{Proposition}
\newtheorem{coro}[thm]{Corollary}
\newtheorem{lem}[thm]{Lemma}
\newtheorem{conj}[thm]{Conjecture}

\newcommand{\e}{\mathrm{e}}

\theoremstyle{remark}
\newtheorem{rem}[thm]{Remark}

\newcommand{\ds}{\displaystyle}
\newcommand{\pfrac}[2]{\left(\frac{#1}{#2}\right)} 
\newcommand{\be}{\begin{equation}}
\newcommand{\ee}{\end{equation}}

\newcommand{\flr}[1]{\lfloor #1 \rfloor}

\newcommand{\alg}[1]{\begin{align*}{#1}\end{align*}}
\newcommand{\1}[1]{\label{#1}}

\newcommand{\E}{\mathbb{E}}
\newcommand*{\defeq}{\mathrel{\vcenter{\baselineskip0.5ex 
	\lineskiplimit0pt\hbox{\scriptsize.}
	\hbox{\scriptsize.}}}=}

\def\cH{\mathcal{H}}

\newcommand{\edit}[1]{{#1}}

\renewcommand{\v}{\varepsilon}
\renewcommand{\a}{\mathbf{a}}
\renewcommand{\le}{\leqslant}
\renewcommand{\leq}{\leqslant}
\renewcommand{\ge}{\geqslant}
\renewcommand{\geq}{\geqslant}

\DeclareFontFamily{OT1}{rsfs}{}
\DeclareFontShape{OT1}{rsfs}{n}{it}{<-> rsfs10}{}
\DeclareMathAlphabet{\mathscr}{OT1}{rsfs}{n}{it}

\newcommand{\cA}{\mathcal{A}}
\newcommand{\cQ}{\mathcal{Q}}
\newcommand{\cR}{\mathcal{R}}
\newcommand{\cN}{\mathcal{N}}

\newcommand{\cF}{\mathcal{F}}

\newcommand{\cT}{\mathcal{T}}
\newcommand{\cK}{\mathcal{K}}

\newcommand{\ccP}{\mathcal{P}}
\newcommand{\ccS}{\mathcal{S}}
\renewcommand{\dd}{\mathbf{d}}
\renewcommand{\(}{\left(}
\renewcommand{\)}{\right)}
\newcommand{\fS}{\ensuremath{\mathfrak{S}} }

\renewcommand{\l}{\lambda}

\subjclass{Primary: 11N05, 11B83}

\title{The Poisson Tail Conjecture for Primes in Short Intervals}

\author[A.~Jha]{Abhishek Jha}

\address{Department of Mathematics, 1409 West Green Street,
 University of Illinois, Urbana-Champaign, Urbana, IL 61801, USA}

\email{jha33@illinois.edu}

\usepackage{microtype}
\begin{document}
\setcounter{tocdepth}{1}
\begin{abstract}
In 1976, Gallagher showed that, conditional on the Hardy--Littlewood conjectures, the number of primes below $x$ in a randomly chosen short interval of length $\lambda \log x$ asymptotically follows a Poisson distribution with mean $\lambda$. Correspondingly, the normalized gaps between consecutive primes follow an exponential distribution, provided that the scaling parameter $\lambda$ is fixed. We investigate the validity and limitations of the associated folklore Poisson Tail Conjecture as $\lambda$ is allowed to grow. For slowly growing $\lambda$, and conditional on a strong variant of the Hardy--Littlewood conjectures, we establish asymptotics showing that the local counting statistics agree with these predictions. Furthermore, we identify the scale of the phase transition and establish the breakdown of these distributions for larger $\lambda$. The proof relies on a novel combination of extremal interval sieve estimates and concentration inequalities from probability.
\end{abstract}
\maketitle

\section{Introduction}\label{sec: Introduction}

The prime number theorem implies that the average gap between two consecutive primes $p_n, p_{n+1}$ is of size $ \log p_n$. However, the sequence of prime gaps exhibits extreme deviations from this average. At the lower end of the spectrum, celebrated breakthroughs \cite{Zhang, maynard, polymath} established the existence of bounded gaps. In contrast, the true magnitude of unusually large prime gaps remains elusive. Ford, Green, Konyagin, Maynard, and Tao \cite{tao2} showed that \[p_{n+1}-p_n\gg \frac{(\log p_n)(\log\log p_n) (\log\log\log\log p_n)}{\log\log\log p_n}\] for infinitely many $n$\footnote{A very recent manuscript of OpenAI reports a further improvement to this lower bound; see \cite{OpenAILargeGaps}.}. These results concern the extreme behavior of prime gaps. Despite the progress, an asymptotic for the proportion of prime gaps that are a constant times the average gap has not been rigorously established.

\medskip
In a seminal work, Cramér \cite{cramer} proposed a probabilistic model for the distribution of primes, treating their occurrences as independent Bernoulli trials. Specifically, the primality of integers $n\ge 3$ was modeled via a random set of integers $\mathcal{C}=\{C_1,C_2,\ldots\}$ generated by including each $n$ independently with probability $1/\log n$. Cramér's heuristic is expected to govern the statistical behavior of the primes,  predicting that the distribution of normalized prime gaps asymptotically follows an exponential distribution. In particular, for any fixed real number $\l>0$, the random model almost surely satisfies \be\label{exponen}\ds{\lim_{x\rightarrow\infty}}\frac{\big|\{C_{n}\le x: C_{n+1}-C_n\ge \l\log x\}\big|}{x/\log x}=e^{-\l}.\ee 

\medskip
\edit{Gallagher \cite{gallagher} demonstrated that the analog of \eqref{exponen} for primes can be rigorously deduced from the Hardy--Littlewood conjectures \cite{HL}.}  These conjectures assert that the asymptotic relation \be\label{eq:HL-asym}
|\{n\le x:n+h\in\mathcal{P}\text{~for all~}h\in\cH\}|
=\(\fS(\cH)+o(1)\) \int_{2}^{x} \frac{\dd t}{\log^{|\cH|} t}\quad(x\rightarrow\infty)
\ee holds for any finite set $\cH \subset \mathbb{Z}$, where $\mathcal{P}$ is the set of primes and $\fS(\cH)$ is the singular series given by
\be\label{eq:singular-series}
\fS(\cH)\defeq \prod_p\bigg(1-\frac{|\cH\bmod p|}p\bigg)
\bigg(1-\frac1p\bigg)^{-|\cH|}.
\ee
The left side of \eqref{eq:HL-asym}
is bounded if $|\cH\bmod p|=p$ for
some prime $p$, since then for every integer $n$,
{one has} $p\,\mid\,n+h$ for some $h\in \cH$. 
{In this case, $\fS(\cH)=0$. In \cite[Section 1]{ksound}, Soundararajan discusses in great detail the intuition behind these conjectures and the emergence of the singular series from local arithmetic constraints. Gallagher showed that under these conjectures, the distribution of primes in intervals of logarithmic size is asymptotically Poissonian. For fixed $\l>0$, suppose that \eqref{eq:HL-asym} holds, for every fixed $r\ge 1$, uniformly over $|\cH|=r$ and $\cH\subseteq [0,\l \log x]$. Then for every fixed $k\ge 0$, \be\label{interval}\ds{\lim_{x\rightarrow\infty}}\,\frac1{x}\big|\{n\le x:\pi(n+\l \log x)-\pi(n)=k\}\big|=\frac{e^{-\l}\l^{k}}{k!}.\ee \edit{Gallagher's} analysis relies on the relation \be\label{singular}\sum_{\substack{\cH\subset [0,y]\\ |\cH|=k}}\fS(\cH)\sim \binom{y}{k}\quad(y\rightarrow\infty),\ee so that the singular series for sets of size $k$ have an average value of one.  It is not hard to see that this implies the analog of \eqref{exponen} for primes (see \cite[Theorem 2]{large} or \cite[Exercise 1.3]{ksound} for a proof of this equivalence). 

\medskip
Our discussion so far has focused exclusively on fixed $\l$. A fundamental question is how these local distributions behave when $\l$ grows with $x$. A well-documented flaw in Cramér's model is the failure of the analog of \eqref{eq:HL-asym} for all $\cH$. In particular, \edit{$\mathcal{C}$ almost surely satisfies an analog  of \eqref{eq:HL-asym} with $\fS(\cH)$ replaced by $1$; for example, $\mathcal{C}$  has $\sim x/\log^2 x$ pairs of consecutive integers below $x$.} The disparity arises from local arithmetic constraints: for any prime $p$, the set of primes completely avoids the residue class $0\bmod{p}$ (with the single exception of $p$ itself), whereas $\mathcal{C}$ is uniformly distributed modulo ${p}$. By incorporating these constraints, refined probabilistic models \cite{granville2,ford2025} recover the Hardy--Littlewood conjectures. Nonetheless, \edit{Cramér's heuristic predicts \eqref{interval} for a wide range of $\l$, because the averages of $\fS(\cH)$ and $1$ are asymptotically the same.} A standard application of the Borel--Cantelli lemma and Chebyshev's inequality shows that for any growing parameter $\l=o(\log x)$, with probability $1$,  we have \be \sum_{\substack{C_n\le x \\ C_{n+1}-C_n\ge \l \log x}}1\sim \(1-\frac1{\log x}\)^{\l \log x}\frac{x}{\log x}\sim e^{-\l}\frac{x}{\log x}\quad(x\rightarrow\infty).\ee This prediction naturally leads to the folklore Poisson Tail Conjecture \cite{large}.

\smallskip
\begin{conj}\label{taill}
    For any $\v>0$ and $0\le \l \le (\log x)^{1-\v}$, we have 
    \be\label{tail}\sum_{\substack{p_n\le x \\ p_{n+1}-p_n\ge \l \log x}}1\asymp e^{-\l}\frac{x}{\log x}.\ee
\end{conj} 

Our main goal in this paper is to determine how far these Poisson statistics persist as $\l$ grows with $x$. Conditional on a sufficiently uniform form of \eqref{eq:HL-asym}, we show that the Poisson and exponential distributions remain valid for a growing range of $\l$, while breaking down for larger values. In particular, our results locate the transition on the scale \[\exp\(\Theta\(\sqrt{\log_2 x\log_3 x}\)\).\]
Building upon Gallagher's work, Leung \cite{leung} proved that for any disjoint finite intervals $I_1,\ldots, I_r\subset (0,\infty)$, the number of primes in the intervals $n+I_1\log n,\ldots,n+I_r\log n$ is asymptotically jointly Poisson with parameters given by the lengths of $I_1,\ldots, I_r$.} More recently, Kravitz, Woo, and Xu \cite{Xu} established an averaged polynomial analog of Conjecture \ref{taill}. By averaging over families of polynomials $g \in \mathbb{Z}[x]$ of bounded degree and height, they proved that for almost all such polynomials, the number of prime values $g(n)$ for integers $n\in [x, x+L]$ asymptotically follows a Poisson distribution, confirming the polynomial analog of the Poisson Tail Conjecture in an averaged sense.

\medskip
A natural course to convert Conjecture \ref{taill} into a rigorous conditional statement is to extend Gallagher's approach employing the Hardy--Littlewood conjectures. \edit{However, a major challenge is that standard methods for estimating these sums of singular series are limited to ranges where $\l$ is at most $\log\log x$ (see \cite{sound}).} Recently, Kuperberg \cite{kuperberg} \edit{pushed} these singular series estimates to their limits, extracting new information regarding the tails of the distribution of primes. 
\edit{To overcome the technical barriers of the aforementioned approach, Banks, Ford, and Tao \cite{ford2025} proposed an alternative model that treats the primes as survivors of a random sieving process.} By probabilistically reinterpreting the summation in \eqref{singular}, their approach accommodates substantially larger values of $\l$, \edit{avoiding the need to analyze fluctuations in $\fS(\cH)$ directly.}

\medskip
To motivate their model, Banks, Ford, and Tao first reinterpret \eqref{eq:HL-asym} probabilistically. They relate the classic counting function to a random sieving process by treating the singular series as a product of local densities. For every prime $p$, let $\a_{p} \bmod{p}$ be a residue class chosen uniformly at random, with these choices being jointly independent. \edit{Define} the randomly sifted set $\ccS_{z}$ as \[\ccS_z\defeq \mathbb{Z}\setminus\bigcup_{p\le z}(\a_{p}+p\Z).\]Then, \edit{for admissible $\cH$,} \eqref{eq:HL-asym} takes the new form \be\label{probab}|\{n\le x:n+h\in\mathcal{P}\text{~for all~}h\in\cH\}|
\sim \int_{e^{2}}^{x} \P\(\cH\subset \ccS_{z(t)}\)\dd t  =\int_{e^2}^{x}\prod_{p\le z(t)}\(1-\frac{|\cH \bmod{p}|}{p}\)\dd t,\ee where $z(t)$ is the largest prime number such that $\prod_{p\le z(t)}(1-1/p)^{-1}\le \log t$ and $z(t)\sim t^{1/e^{\gamma}}$.  This formulation asserts that the probability of a random shift of $\cH$ lying in the primes is asymptotically equivalent to the probability of $\cH$ surviving this random sieve. This equivalence leads them to define their random set of integers $\cR$ as \[\cR\defeq \{n\ge e^2: n\in \ccS_{z(n)}\}.\] Crucially, the techniques developed in their analysis provide a probabilistic framework for addressing the Poisson Tail Conjecture (Conjecture~\ref{taill}) beyond the range accessible to direct estimates for sums of singular series. However, our arguments require a careful derivation of concentration estimates for the random sieve, together with suitable extremal interval sieve estimates. Unlike Cramér's model, the random set $\cR$ does satisfy the analog of \eqref{eq:HL-asym}, uniformly in a wide range of $\cH$ and with power-saving error terms. 

\smallskip
\begin{prop}[{\cite[Theorem 1.3]{ford2025}}]\label{eq:HL-R}
   Fix $c\in[1/2,1)$.
 Almost surely, we have
\[
|\{n\le x:n+h\in\cR\text{~for all~}h\in\cH\}|
=\fS(\cH)\int_{2}^{x} \frac{\dd t}{\log^{|\cH|} t}+
\O\big(x^{1-\frac{1-c}{8c^2-2c}+o(1)}\big)
\]
uniformly for all admissible tuples $\cH$ satisfying
$|\cH| \le \log^c x$ and in the range 
$\cH \subset \left[0,\exp\( \frac{\log^{1-c} x}{\log_2 x} \)\right]$ where the constant implied by the $\O$-symbol exists almost surely, though it is not uniformly bounded with respect to $x$. 
\end{prop}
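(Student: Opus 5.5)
We will prove the statement by computing a first moment, bounding the second moment, and then applying Borel--Cantelli over a union bound on all $\cH$. Fix $x$ and an admissible $\cH$ with $|\cH|=k\le \log^c x$ and $\cH\subset[0,H]$, where $H=\exp(\log^{1-c}x/\log_2 x)$. Write
\[
N_\cH(x)=\sum_{e^2\le n\le x}\mathbf{1}\{n+\cH\subset \cR\}.
\]
For large $n$ we have $z(n+h)=z(n)$ for all $h\in\cH$, except for $n$ lying within $H$ of a change point of $z(\cdot)$. These exceptional $n$ number $O(\pi(x^{1/e^\gamma})H)$, which is negligible. Away from them, $\P(n+\cH\subset\ccS_{z(n)})=\prod_{p\le z(n)}(1-|\cH\bmod p|/p)$.

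\textbf{Step 1 (mean).} We show that
\[
\prod_{p\le z}\Big(1-\frac{|\cH\bmod p|}{p}\Big)=\fS(\cH)\prod_{p\le z}\Big(1-\frac1p\Big)^{k}\big(1+O(\text{error})\big).
\]
For $p>z$ with $p>H$ we have $|\cH\bmod p|=k$. The tail factor is then
\[
\prod_{p>z}\Big(1-\frac kp\Big)\Big(1-\frac1p\Big)^{-k}=\exp\big(O(k^2/z)\big),
\]
which is fine because $z\approx t^{e^{-\gamma}}$ is a power of $t$ and $k^2\le\log^{2c}x$. Then use $\prod_{p\le z(t)}(1-1/p)^{-1}=\log t\,(1+O(1/z(t)))$, which follows from the choice of $z(t)$ as the largest prime with $\prod_{p\le z(t)}(1-1/p)^{-1}\le\log t$. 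Raising to the $k$-th power costs only $1+O(k/z)$. Summing over $n$ gives
\[
\E N_\cH(x)=\fS(\cH)\int_2^x \frac{\dd t}{\log^k t}+O(x^{1-\delta}).
\]
One small point: for $t$ small, $z(t)$ may be below $H$, but $t\le x^{\epsilon}$ contributes trivially.

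\textbf{Step 2 (variance).} Pair $n,n'$ with $|n-n'|>2H$ and $z(n)=z(n')=z$. The event for the pair is $\cH\cup(\cH+n'-n)\subset\ccS_z$, which is determined by the residues $\a_p$. Its probability factorizes over primes $p\le z$, and for a given $p$ the two translates collide modulo $p$ only when $p\mid(n'-n-(h'-h))$. The covariance is then controlled by
\[
\prod_{p\le z}\frac{1-|\cH\cup(\cH+m)\bmod p|/p}{\big(1-|\cH\bmod p|/p\big)^2}-1,
\]
where $m=n'-n$. This is small unless $m$ has many prime factors $p$ dividing some difference $m-(h'-h)$. The approach is to average over $m$ in a range of length about $x$. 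The number of $m$ for which the product deviates substantially is controlled by divisor-type bounds on the $k^2$ shifted numbers $m-(h'-h)$. We aim to show
\[
\mathrm{Var}\,N_\cH(x)\ll x^{2-2\delta'}
\]
for an explicit $\delta'$. The key trade-off is this: primes $p\le y$ produce genuine correlation from the common random residues, and we bound their effect crudely by $\exp(O(k^2\sum_{p\le y}1/p^2))$. Large primes $p>y$ rarely cause collisions. Balancing $y$ against $k=\log^c x$ and $H$ is where the exponent $\frac{1-c}{8c^2-2c}$ should emerge.

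\textbf{Step 3 (union bound and Borel--Cantelli).} Chebyshev gives
\[
\P\big(|N_\cH-\E N_\cH|>x^{1-\delta}\big)\le x^{-2\delta''}.
\]
The number of tuples is at most $H^{k}=\exp(\log^{c}x\cdot\log^{1-c}x/\log_2x)=x^{1/\log_2 x}=x^{o(1)}$, so the union costs only $x^{o(1)}$. We apply this along $x=2^j$ and interpolate using monotonicity of $N_\cH$ in $x$, together with smoothness of the main term on a finer geometric grid $x_j=(1+x_j^{-\eta})^j$. Summability then gives the almost-sure result, with an implied constant that is random and not uniform in $x$.

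The main obstacle is Step 2. The small primes create real dependence among the events for different $n$, and a naive second moment is weak. We may instead need a higher moment, or a conditional argument: first fix $\a_p$ for $p\le y$, then use the independence of the large-prime residues to get concentration, for instance via Azuma or McDiarmid with residues revealed prime by prime. This is what should produce a power saving rather than a logarithmic one.
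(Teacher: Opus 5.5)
The paper does not prove this proposition. It is quoted from \cite[Theorem 1.3]{ford2025} and serves only as context, so your outline has to stand on its own, and it does not. Step 2 is the entire content of the theorem, but you only state it as a goal, and the mechanism you sketch would fail. Consider the normalized covariance
\[
\prod_{p\le z}\frac{1-|(\mathcal{H}\cup(\mathcal{H}+m))\bmod p|/p}{\bigl(1-|\mathcal{H}\bmod p|/p\bigr)^{2}}-1 .
\]
It is not small pointwise, with or without collisions. For $p=2$ the local factor is $0$ for every odd $m$ and $2$ for every even $m$. In general, for small $p$ it ranges from $0$ up to $p/(p-|\mathcal{H}\bmod p|)$.

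Hence your bound $\exp(O(k^2\sum_{p\le y}p^{-2}))$ is false. Even as written it equals $\exp(O(\log^{2c}x))$, which for $c\ge 1/2$ is at least a fixed power of $x$ and wipes out any power saving. The variance is small only through cancellation on average: each local factor has mean exactly $1$ over $m\bmod p$. Exploiting this through complete periods in $m$ requires $\prod_{p\le w}p\le x^{1-\epsilon}$, i.e.\ $w\lesssim\log x$. On the other side, for primes $p>P$ the ratio differs from $1$ by about $k^2/(P\log P)$ even for collision-free $m$, so a pointwise power saving needs $P\ge x^{\epsilon}$.

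The primes in $(\log x,x^{\epsilon}]$ are covered by neither device. Controlling them uniformly for $k\le\log^c x$ and $\mathcal{H}\subset[0,H]$ is where the real work lies, and where a $c$-dependent exponent must be earned. ``Divisor-type bounds on the shifted numbers'' do not address this. Your conditional Azuma/McDiarmid alternative hits the same wall. Even with an upper-bound sieve, the worst-case effect of revealing $\mathbf{a}_p$ is a proportion $x^{o(1)}k/p$ of the count. So the primes above $y$ give relative fluctuations of order $x^{o(1)}k/\sqrt{y}$, and a power saving forces $y\ge x^{\epsilon}$. But after conditioning on all $\mathbf{a}_p$ with $p\le x^{\epsilon}$, the surviving set has no usable periodicity.

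Two further points would block the stated exponent even if Step 2 were settled. First, suppose $\mathrm{Var}\,N_{\mathcal{H}}(x)\ll x^{2-2\delta'}$. Chebyshev plus a union bound over a grid of spacing $x^{1-\delta}$ (needed for interpolation by monotonicity) is summable only if $3\delta<2\delta'$. The count has Poisson-type fluctuations from the large primes, so one cannot expect $\delta'>1/2$. This scheme therefore gives at best $x^{2/3+o(1)}$, not the $x^{1/2+o(1)}$ claimed at $c=1/2$. You would need a maximal inequality (chaining over dyadic subintervals) or higher-moment or exponential concentration.

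Second, the $n$ within $H$ of a jump of $z(\cdot)$ number about $\pi(z(x))H=x^{e^{-\gamma}+o(1)}\approx x^{0.56}$. That is not negligible against $x^{1/2+o(1)}$, so these $n$ must be kept. Their probability is $\prod_{p\le z(n)}(1-|\mathcal{H}\bmod p|/p)\,(1+O(k/z(n)))$. Similarly, pairs with $z(n)=z(n')$ are a vanishing proportion of all pairs, since $z$ is constant only on intervals of length about $x^{1-e^{-\gamma}}$. The covariance analysis must therefore also handle the mixed products with $z(n)\ne z(n')$.
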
  For comparison, it has been conjectured that a much stronger version of \eqref{eq:HL-asym} holds (see, e.g., \cite{sound, kuperberg}), namely: 
\be|\{n\le x:n+h\in\mathcal{P}\text{~for all~}h\in\cH\}|
=\fS(\cH) \int_{2}^{x} \frac{\dd t}{\log^{|\cH|} t}+\O(x^{1/2+o(1)})\quad(x\rightarrow\infty).\ee This matches the error term in Proposition \ref{eq:HL-R} when $c=1/2$. 

\medskip
We are now in a position to state our main results. Let $\cA \subset \N$ be a set of positive integers. Put \be\label{r}M_{\cA}(x;y)\defeq \big|\{n\le  x:n\in \cA\text{ and }[n+1,n+y]\cap\cA=\varnothing\}\big|,\ee and \be\label{Gall} N_{\cA}(x;y,k)\defeq \big|\{n\le  x:|[n+1,n+y]\cap\cA|=k\}\big|.\ee  

Note the intentional asymmetry between the definitions of $M_{\cA}$ and $N_{\cA}$. \edit{For the quantity} $M_{\cA}(x;y)$, the index $n$ is restricted to $ \cA$ to count the gaps between consecutive elements of the set that exceed length $y$. On the other hand, $N_{\cA}(x;y,k)$ drops this restriction, measuring the frequency of short intervals $[n+1,n+y]$ containing exactly $k$ elements of $\cA$, sampled uniformly over all integers $n\le x$.
\smallskip
\begin{defn}
    Let $\phi$ and $\psi$ be positive real-valued functions. We say that $\cA\subset \N$ satisfies $\mathbf{HL}[\phi,\psi]$ if the relation
\be\label{eq:HLA}
|\{n\le x:n+h\in\cA\text{~for all~}h\in\cH\}|
=\fS(\cH) \int_{2}^{x} \frac{\dd t}{\log^{|\cH|} t}+
\edit{E(x,\cH)}
\ee
holds uniformly over all finite sets of integers $\cH \subset [0,\phi(x)]$
with $|\cH| \le \psi(x)$, \edit{and the error term satisfies $|E(x,\cH)|\le xe^{-\psi(x)\,\log_2 x}$ for $x\ge x_0$, where $x_0$ is some constant independent of $\cH$.}
\end{defn} Under the assumption that $\cA=\ccP$, this condition recovers the classical Hardy--Littlewood conjecture for primes. For convenience, we introduce some notation used throughout. We let $\log_k x$ denote the $k-$fold iterated logarithm truncated at zero; that is $\log_1 x\defeq \max\{0,\log x\}$ and $\log_k x\defeq \max\{0,\log(\log_{k-1} x)\}$ for $k\ge 2$. Let $\eta(x)>0$ tend to zero sufficiently slowly, and define
\be\label{eq:lambda_thresholds}
\l'=\lambda'(x) \defeq  \exp\left( \frac{1}{\sqrt{7}} \sqrt{\log_2 x \log_3 x} \right),\quad\textrm{and}\quad \l''=\lambda''(x) \defeq  \exp\left( \(\sqrt{2}+\eta(x)\) \sqrt{\log_2 x \log_3 x} \right).
\ee

Assuming that $\cA$ satisfies $\mathbf{HL}[\phi,\psi]$ for appropriate functions $\phi$ and $\psi$, our first result establishes a partial form of the asymptotic predicted in \eqref{tail}.

\smallskip
\begin{thm}[Uniform exponential distribution]\label{thm:HLgaps}
 Assume that
 $\cA \subset \N$ satisfies \[{\mathbf{HL}}\left[ \lambda'\log x, \exp(\sqrt{\log_2 x\log_3 x})\right].\] Then, for all sufficiently large $x$, we have 
\[
M_{\cA}(x;\lambda\log x)=\frac{xe^{-\lambda}}{\log x}\(1+\O\(\(\log_2 x\)^{-10}\)\),
\]
\edit{uniformly for $\,0<\l\le\l'$.} 
\end{thm}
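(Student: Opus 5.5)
The plan is to write $M_{\cA}(x;y)$, with $y=\l\log x$, as a truncated inclusion--exclusion sum. This converts it, via $\mathbf{HL}$, into a statement about the random sieve of Banks--Ford--Tao, and then reduces that to a concentration estimate for the number of sieve survivors in $[1,y]$. Concretely,
\[
M_{\cA}(x;y)=\sum_{n\le x}\mathbf 1_{\cA}(n)\prod_{h=1}^{y}\big(1-\mathbf 1_{\cA}(n+h)\big),
\]
and the Bonferroni inequalities give upper and lower bounds
\[
\sum_{k\le K}(-1)^k\sum_{\substack{\cH\subset[1,y]\\|\cH|=k}}\big|\{n\le x:\ n+h\in\cA\ \forall h\in\{0\}\cup\cH\}\big|
\]
for $K$ even or odd. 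I take $K\approx \exp(\sqrt{\log_2 x\log_3 x})-1$, so that $|\{0\}\cup\cH|\le\psi(x)$ and $\cH\subset[0,\l'\log x]$, as required by the hypothesis.

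\textbf{Step 1: the $\mathbf{HL}$ error terms are negligible.} The accumulated error from \eqref{eq:HLA} is at most
\[
\sum_{k\le K}\binom{y}{k}\,x e^{-\psi\log_2 x}\le x\exp\big(K\log(e y/K)-K\log_2 x\big).
\]
Here $\log y=\log_2 x+\log\l'$. Moreover $\log K=\sqrt{\log_2 x\log_3 x}$ exceeds $\log\l'=\tfrac1{\sqrt7}\sqrt{\log_2x\log_3x}$ by a large margin. Hence the exponent is $\le -cK\sqrt{\log_2 x\log_3 x}$, and the total error is $o(xe^{-\l'}/\log x)$. This is exactly where the constant $1/\sqrt7$ leaves room.

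\textbf{Step 2: rewrite the main term with the random sieve.} Using \eqref{probab} in reverse, each main term $\fS(\{0\}\cup\cH)\int\dd t/\log^{k+1}t$ becomes, up to acceptable error, $\int_{e^2}^x\P(\{0\}\cup\cH\subset\ccS_{z(t)})\,\dd t$. Interchanging sum and expectation, the truncated alternating sum equals
\[
\int_{e^2}^{x}\E\Big[\mathbf 1_{0\in\ccS_{z(t)}}\sum_{k\le K}(-1)^k\binom{X_t}{k}\Big]\dd t,\qquad X_t\defeq|\ccS_{z(t)}\cap[1,y]|.
\]
The untruncated sum is $\mathbf 1_{X_t=0}$. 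The truncation error is bounded by $\E[\mathbf 1_{0\in\ccS}\binom{X_t}{K+1}]$, which is tiny provided $X_t$ has a strong upper tail. For this I combine an extremal interval sieve bound with a concentration inequality: $X_t\le$ (Selberg-type bound) $\ll y/\log y$ deterministically, and $X_t$ exceeds $K/2\gg\l$ only with superexponentially small probability. So the problem reduces to showing
\[
\P\big(X_t=0\mid 0\in\ccS_{z(t)}\big)=e^{-\l}\big(1+\O((\log_2 x)^{-10})\big),
\]
uniformly for $t\in[x/(\log x)^{A},x]$. Smaller $t$ contribute trivially, and $\log t=\log x\,(1+o(1))$ is absorbed since $\l\log t/\log x$ differs from $\l$ by $o((\log_2x)^{-10})$ in this range of $t$.

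\textbf{Step 3 (main obstacle): the empty-interval probability.} I split the primes at $w=y^{\delta}$ with small $\delta$ (or at $w=\log_2 x$-power scale). First I sieve by $p\le w$. The residual set $R\subset[1,y]$ has expected size $y\prod_{p\le w}(1-1/p)$, adjusted for conditioning on $0$. By McDiarmid/Azuma applied over the independent $\a_p$, $|R|$ concentrates with relative error $\ll y^{-c}$, except on an event of probability $\exp(-y^{c})\ll e^{-\l}(\log_2 x)^{-10}$. For primes $w<p\le z(t)$, independence gives
\[
\P(R\text{ fully sifted})=\E\Big[\prod_{w<p\le z}\big(1-|R\bmod p|/p\big)\Big]
\]
in the form of a survival product. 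For $p>y$ one has $|R\bmod p|=|R|$, and for $w<p\le y$ collisions cost only a factor $1+\O(|R|^2/p\cdot y^{-1})$. Mertens then turns the product into $\exp(-|R|\sum 1/p\cdot\ldots)$, which, after matching normalisations with $z(t)$, gives $e^{-\l(1+o((\log_2x)^{-10}))}$.

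The delicate point is that we need \emph{relative} precision for a probability of size $e^{-\l}$ with $\l$ as large as $\l'$. Fluctuations of $|R|$ of relative size $\epsilon$ change the answer by $e^{\pm\epsilon\l}$. So I need $\epsilon\l\ll(\log_2x)^{-10}$, and I must also treat inclusion--exclusion in the middle range $w<p\le y$ exactly rather than to first order. I would first try to get exponential concentration of $|R|$ at scale $\epsilon=y^{-c}$ and to handle the middle primes by a second application of the extremal sieve bound. If that fails, I would iterate the splitting over dyadic ranges of $p$.
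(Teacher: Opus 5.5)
There is a genuine gap in Step 3, in how you treat the small primes. You correctly state what is needed: a relative error $\epsilon$ in $|R|$ with $\epsilon\lambda\ll(\log_2x)^{-10}$, outside an event of probability $\ll e^{-\lambda}(\log_2x)^{-10}$. You cannot get this from McDiarmid/Azuma over the $\mathbf{a}_p$ with $p\le w$. Changing $\mathbf{a}_p$ moves $|R|$ by up to $\asymp |R|/p$, so $\sum_{p\le w}c_p^2\asymp |R|^2$, dominated by $p=2,3,5,\dots$. The resulting tail bound at relative deviation $\epsilon$ is only of the form $2\exp(-c\,\epsilon^2)$.

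Worse, the claimed concentration is false. By Proposition~\ref{f+} (or Lemma~\ref{maierlem}), there are residues modulo the primes $p\le y^{c'}$, for any fixed $c'<\min\{c,\delta\}$, for which the sieved count in $[1,y]$ falls short of its mean by a constant factor. This configuration has probability $\prod_{p\le y^{c'}}p^{-1}=e^{-(1+o(1))y^{c'}}$, far larger than $\exp(-y^{c})$, and the larger primes typically preserve the deficit.

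A sanity check points the same way. Nothing in your Step 3 uses $\lambda\le\lambda'$. Under a hypothesis like that of Theorem~\ref{UK4}, it would therefore give $M_{\cA}(x;\lambda\log x)\sim xe^{-\lambda}/\log x$ for $\lambda=\log^{c_0}x$ with $c_0$ small. Theorem~\ref{UK4} rules this out, since $f^{+}(1+1/c_0+)<1$. So the constant $1/\sqrt7$ does not come from the $\mathbf{HL}$ bookkeeping in Step 1; that step is fine, with room to spare.

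The missing idea is to handle the primes $p\le w_1=\lambda_0^{25/8}$ deterministically, by the fundamental lemma (Lemma~\ref{f1}). For every choice of residues it gives $S_{w_1}=y\,\Theta_{w_1}(1+\mathcal{O}(\lambda_0^{-21/20}))$. Hence $\epsilon\lambda\le\lambda_0^{-1/20}\le(\log_2 x)^{-10}$, with no exceptional set at all. This requires $u\log u\ge(1+\eta_1)\log\lambda_0$ at level $w_1^u\le y^{1-\eta_3}$.

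Only after that does concentration enter, for $w_1<p\le y^{4/3}$ (Lemma~\ref{f3}). There $\sum_j c_j^2\ll y^2/(w_1\log^3 w_1)$, which gives a tail $e^{-3\lambda_0}$ at relative scale $\lambda_0^{-17/16}$ precisely because $w_1\ge\lambda_0^{3+\eta_2}$. These two constraints on $w_1$ are compatible exactly when $\lambda\le\lambda'$ (Remark~\ref{rem:constants}). Neither of your split points works: $w=y^{\delta}$ leaves a constant fundamental-lemma error, and $w=(\log_2 x)^{\mathcal{O}(1)}$ is far below $\lambda^{3}$ for large $\lambda$.

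Two further steps need repair.

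First, $\prod_{w<p\le z}(1-|R\bmod p|/p)$ is the probability that every element of $R$ survives, not that $R$ is fully sifted. It evaluates to roughly $(\log w/\log z)^{|R|}$, not $e^{-\lambda}$. The correct statement, once $w\ge y^4$, is Lemma~\ref{lem:w_3w_4} with $g=0$: survivors are approximately independent with probability $\Theta_{w,z}$, giving $(1-\Theta_{w,z})^{|R|}(1+\mathcal{O}(y^3/w))$. The primes in $(w_1,y^8]$ have to be absorbed into the concentration steps (Lemmas~\ref{f3} and~\ref{lem:w4w5}); a collision factor does not handle them.

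Second, you cannot discard $t<x/(\log x)^{A}$. The quantity $e^{-\lambda'}$ is smaller than every fixed power of $1/\log x$, so the trivial bound swamps the main term. Moreover, for small $t$ the truncated alternating sum does not converge. Use a cutoff such as $x_0=x^{1-1/\lambda_0^{21/20}}$ and bound $n\le x_0$ trivially, as in Lemma~\ref{probru}. Tiny $\lambda$, e.g.\ $\lambda<1/\sqrt{\log x}$, also needs a separate elementary argument.
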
 

Before stating our next result, we recall a recent conjecture of Kuperberg \cite[Conjecture 1.7]{kuperberg} concerning a uniform variant of \eqref{interval}.

\begin{conj}\label{conj}
    Let $\l=(\log x)^{o(1)}$ and $k\ll\, (\log_2 x)^2$. Then, \be\label{interval2}\big|\{n\le x:\pi(n+\l \log x)-\pi(n)=k\}\big|\sim x\frac{e^{-\l}\l^{k}}{k!}\quad(x\rightarrow\infty).\ee
\end{conj}

In a similar spirit, our next result establishes a uniform variant of \eqref{interval} over a substantially wider range of $k$.

\smallskip
\begin{thm}[Uniform Poisson statistics]\label{thm:HLpoisson}
Assume that
 $\cA \subset \N$ satisfies \[{\mathbf{HL}}\left[ \lambda'\log x, \exp(\sqrt{\log_2 x\log_3 x})\right].\] Then, for all sufficiently large $x$, we have 
\[
N_{\cA}(x;\lambda\log x,k)=x\frac{e^{-\lambda}\l^{k}}{k!}\(1+\O\(\(\log_2 x\)^{-10}\)\),
\]
uniformly for $ 1/{\sqrt{\log x}}\le \lambda\le  \l'$ and  non-negative integers $k$ such that  \[\max\{k,k\log(k/\l)\}\le \exp\(\frac1{\sqrt{15}}\sqrt{\log_2 x \log_3 x}\).\]
\end{thm}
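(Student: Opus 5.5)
We follow the Banks--Ford--Tao philosophy: transfer the counting problem for $\cA$ to one about the random sifted set $\ccS_z$, and then prove Poisson statistics for the random sieve by concentration arguments rather than by direct averaging of singular series.

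\emph{Step 1: inclusion--exclusion.} Put $y=\lambda\log x$ and $I_n=[n+1,n+y]$. Write
\[
N_{\cA}(x;y,k)=\sum_{n\le x}\sum_{j\ge k}(-1)^{j-k}\binom{j}{k}\binom{|I_n\cap\cA|}{j},
\]
and truncate the alternating sum at a level $J$ using the Bonferroni inequalities, with $J\approx \exp(\sqrt{\log_2 x\log_3 x})$ but still admissible for $\mathbf{HL}$. Expanding $\binom{|I_n\cap \cA|}{j}$ as a sum over sets $\cH\subset[1,y]$ with $|\cH|=j$, and applying $\mathbf{HL}[\lambda'\log x,\exp(\sqrt{\log_2x\log_3x})]$ to each $\cH$, gives main terms
\[
\sum_{|\cH|=j}\fS(\cH)\int_2^x\frac{\dd t}{\log^j t}.
\]
Summed over $\cH$ and $j\le J$, the error $xe^{-\psi\log_2 x}$ is negligible, because the number of sets $\cH$ is at most $y^{J}\le e^{J\log(\lambda\log x)}$, which is smaller than $e^{\psi\log_2 x}$. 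Via \eqref{probab}, the main term becomes
\[
\int_2^x \E\Big[\binom{|I\cap\ccS_{z(t)}|}{j}\Big]\,\dd t
\]
up to small error. Here $I=[1,y]$, so we are computing the distribution of $|I\cap \ccS_{z}|$, where $z=z(t)$ and $t$ essentially ranges over $[x/\log^{C} x,x]$.

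\emph{Step 2: the random sieve count.} Split the sieve at a parameter $w$, say $w=\log y$ or a small power of $y$. For the large primes $w<p\le z$, the sifting is nearly independent across points. Conditional on the small-prime survivor set $\ccS_w\cap I$, the count of points surviving the remaining primes is approximately Poisson, with mean
\[
|\ccS_w\cap I|\prod_{w<p\le z}(1-1/p).
\]
This can be proved by factorial moment comparison, since the correlations between points coming from large primes are tiny. The key input is concentration of $|\ccS_w\cap I|$ around $y\prod_{p\le w}(1-1/p)$. We would establish it with a martingale/Azuma or McDiarmid bound over the independent choices $\a_p$, $p\le w$: each prime changes the count by at most about $y/p$. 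We would supplement this with extremal interval sieve bounds (Iwaniec's upper and lower bounds for sifted intervals) to control the rare events where the count deviates by a large factor. Mixing a Poisson distribution over a mean $\lambda(1+\delta)$ with $\delta$ concentrated then gives $e^{-\lambda}\lambda^k/k!\,(1+O(k\delta+\lambda\delta^2+\dots))$.

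\emph{Step 3: the obstacle.} The main difficulty is making the relative error $(\log_2 x)^{-10}$ uniform when $e^{-\lambda}\lambda^k/k!$ is tiny, for $\lambda$ up to $\lambda'$ and $k\log(k/\lambda)$ up to $\exp(\tfrac1{\sqrt{15}}\sqrt{\log_2x\log_3x})$. There are two competing demands. The tail probability that $|\ccS_w\cap I|$ deviates enough to alter the Poisson mass must be below $e^{-\lambda}\lambda^k/k!$, which is about $e^{-\lambda-k\log(k/\lambda)}$. At the same time, the Bonferroni truncation must have $J$ larger than a constant multiple of $\max(k,\lambda)$ while $y^J$ stays within the $\mathbf{HL}$ error budget.

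Balancing the concentration exponent (roughly $y/(w\log w)$-type, from the small primes, and giving the $\exp(\sqrt{\log_2x\log_3x})$ scale after optimizing $w$) against these constraints should produce the constants $1/\sqrt7$ and $1/\sqrt{15}$. I would first attempt the optimization with $w=\exp(c\sqrt{\log_2x\log_3x})$. Finally, integrating over $t$ replaces $\prod_{p\le z(t)}(1-1/p)$ by $1/\log t$, and the variation of $\log t$ over the range is absorbed into the error.
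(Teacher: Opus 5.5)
Your proposal has a genuine gap in Step 2, at the input you yourself call key: the concentration of $|\mathcal{S}_w\cap[1,y]|$. It cannot come from Azuma or McDiarmid over the choices $\mathbf{a}_p$, $p\le w$, with the bounded differences you propose.

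\textbf{The concentration step fails, and the missing idea.} With $c_p\approx y/p$ one has $\sum_{p\le w}c_p^2\asymp y^2\ge(\mathbb{E}S_w)^2$; the primes $2,3,5,\dots$ alone contribute this much. So the tail bound for a relative deviation $\delta$ is $\exp(-O(\delta^2))$, which is vacuous. You need $\delta\ll\lambda^{-1}(\log_2x)^{-10}$ with failure probability well below $e^{-\lambda}\lambda^k/k!\ge e^{-2\lambda_k}$. Sharpening $c_p$ for small $p$ would need equidistribution of the sifted set modulo $p$, uniformly in the other residues, which is itself a sieve statement.

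The missing idea is that the small primes need no probability at all. For \emph{every} choice of $(\mathbf{a}_p)_{p\le w_1}$, the set $\mathcal{S}_{w_1}\cap[1,y]$ is a translate of the integers in an interval of length $y$ coprime to $\prod_{p\le w_1}p$. The fundamental lemma (Lemma~\ref{lmsieve}) therefore gives, deterministically, $S_{w_1}=y\Theta_{w_1}(1+O(\lambda_k^{-21/20}))$ with $w_1=\lambda_k^{25/8}$ and level $y^{125/128}$ (Lemma~\ref{f1}). The interval-sieve bounds you reserve for rare large deviations are thus, in their quantitative form with $u\to\infty$, exactly the tool for the bulk.

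A martingale is used only for $w_1<p\le y^{4/3}$. There $\sum c_p^2\ll y^2/(w_1\log^3w_1)$, against $X_0^2\asymp y^2/\log^2w_1$. A relative deviation $\lambda_k^{-17/16}$ then has probability $\exp(-c\lambda_k^{-17/8}w_1\log w_1)\ll e^{-3\lambda_k}$. This is why Azuma can only start above $\lambda_k^{3+\epsilon}$.

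\textbf{Consequences for the rest of your plan.} The threshold is not an Azuma exponent of type $y/(w\log w)$. It comes from making three requirements compatible:
\begin{itemize}
\item the fundamental-lemma error $e^{-u\log u}\le\lambda_k^{-1-\eta_1}$;
\item the Azuma requirement $w_1\ge\lambda_k^{3+\eta_2}$;
\item the level $w_1^u\le y^{1-\eta_3}$.
\end{itemize}
In the limit these force $\log\lambda_k\lesssim\sqrt{(\log y\log_2y)/6}$, and with slack they give $1/\sqrt7$ and $1/\sqrt{15}$ (Remark~\ref{rem:constants}).

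Your Poisson step is also inadequate as described. For $w<p\le y$ (with $w=\log y$ or $y^{\theta}$), points of $[1,y]$ collide modulo $p$, so the conditional factorial moments carry polynomially small relative errors $\epsilon_j$. Recovering $\mathbb{P}(S=k)$ from them by the alternating sum can amplify such errors by a factor of order $e^{2\lambda}$: the terms have total size about $e^{\lambda}\lambda^k/k!$, against a target of $e^{-\lambda}\lambda^k/k!$. This is harmless in your Step 1, where the errors are $t^{-0.54}$ and $e^{-\psi\log_2x}$. The paper instead carries concentration past $y$ (Azuma up to $y^{4/3}$, then Lemma 6.1 of Banks--Ford--Tao up to $y^8$). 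Only then does it use the exact binomial law of Lemma~\ref{ford}, which controls the point probabilities directly.

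Finally, discarding $n\le x/\log^Cx$ is not negligible, since the target can be as small as about $xe^{-\lambda'}$. You need a cutoff $x^{1-\kappa}$ with $\lambda_k/\log x\ll\kappa\ll\lambda_k^{-1}(\log_2x)^{-10}$; the paper takes $\kappa=\lambda_k^{-21/20}$.
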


\begin{rem}
    We observe that Kuperberg restricted the formulation of the conjecture to the regime $k\ll (\log_2 x)^2$. Our theorem shows that, under the relevant assumptions, the predicted asymptotic behavior holds over a substantially wider range of $k$. 
\end{rem}

\begin{rem}
    We have not attempted to optimize the lower restriction $\l\ge 1/\sqrt{\log x}$, since our primary focus is on growing $\l$. Additionally, for small $\l$, the Brun--Titchmarsh theorem imposes a barrier on the parameter $k$, restricting it to the range $k\ll \frac{\l \log x}{\log (\l\log x)}$.
\end{rem}In contrast to our previous results, the following two results establish that the Poisson distribution breaks down as $\l$ grows. The proofs of these theorems rely on the combination of the oscillations of the sifting function with the probabilistic setup developed for the preceding results. To formulate them precisely, we require a definition related to the interval sieve. The basic interval sieve function is defined as \[S(x,y,z)=\#\{x<n\le x+y: n\text{ has no prime factor }\le z\}.\] Consider the minimum value \[S^{-}(y,z)=\displaystyle\min_{x}S(x,y,z).\] For our purposes, it is more convenient to utilize the following alternative formulation of $S^{-}(y,z)$. In particular, \[S^{-}(y,z)={\ds{\min_{(\a_p)}}\,|(0,y]\cap\ccS_{z}|}.\]  The special case where $z$ is a fixed power of $y$ is of basic interest. The sieve bounds of Jurkat and Richert \cite{sieve1} imply that for any fixed $v\ge 1$ we have \[(1+o(1))e^{-\gamma}F(v)z^v/\log z\ge S^{-}(z^v,z)\ge (1-o(1))e^{-\gamma}f(v)z^v/\log z\quad(z\rightarrow\infty),\] \smallskip where $\gamma$ is the Euler-Mascheroni constant and $f, F$ are the lower and upper bound linear sieve functions defined by the coupled differential-delay equations \alg{&f(v)=0,\quad F(v)=\frac{2e^{\gamma}}{v}\quad(0<v\le 2),\\&f(v)=\frac1{v}\int_{1}^{v-1}F(t)\,\dd t,\quad F(v)=\frac{2e^{\gamma}}{v}+\frac1{v}\int_1^{v-1}f(t)\,\dd t\quad (v\ge 2).}These functions are discussed at length in \cite{sieve1}. In particular, $f(v)<1<F(v)$ for all $v$, and $f(v)$ and $F(v)$ both tend to $1$ rapidly as $v\rightarrow\infty$. Following \cite{Hilde}, we define \[f^{+}(v)\defeq \limsup_{z\to\infty}\frac{S^{-}(z^v,z)}{e^{-\gamma}z^v/\log z}\quad\text{and}\quad f^{-}(v)\defeq \liminf_{z\to\infty}\frac{S^{-}(z^v,z)}{e^{-\gamma}z^v/\log z}\quad(v>1).\] We also denote $f^{+}(v+)\defeq\lim_{\v\to0}f^{+}(v+\v)$.

\medskip
 It is immediate from the definitions that $f^{+}(v)\ge f^{-}(v)\ge f(v)$. Interestingly, Granville \cite[Corollary 1]{granville} proved that the lower bound is sharp, showing $f^{-}(v)=f(v)$ conditional on the existence of an infinite sequence of Siegel zeros. In the other direction, considering random choices for the residue classes $\a_p\bmod p$ gives the upper bound $f^{+}(v)\le 1$. We record the following proposition regarding $f^{+}$, which shows that this inequality is strict. \edit{The proof is essentially due to Maier \cite{maier}; the details of which we defer to Section \ref{prelim}.}

 \smallskip
\begin{prop}\label{f+}
    The function $f^{+}$ is non-decreasing on $(1,\infty)$ and $f^{+}(v)<1$ for every $v>1$.
\end{prop}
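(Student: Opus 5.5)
Two claims need proof: that $f^{+}$ is non-decreasing on $(1,\infty)$, and that $f^{+}(v)<1$ for every $v>1$.

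\emph{Monotonicity.} Fix $1<v<w$ and a large $z$. Write $y=z^w$, and choose residues $(\a_p)_{p\le z}$ attaining $S^{-}(z^v,z)=|(0,z^v]\cap\ccS_z|$. Cover $(0,z^w]$ by $m=\lceil z^{w-v}\rceil$ consecutive blocks of length $z^v$.

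We cannot simply reuse the same sifting on every block, because $\ccS_z$ is periodic modulo $P(z)=\prod_{p\le z}p$, which is far larger than $z^w$. Instead we use the shift formulation: $S^{-}(z^v,z)=\min_x S(x,z^v,z)$. Averaging over the blocks gives
\[
\min_x S(x,z^w,z)\le\sum_{j<m}S(x+jz^v,z^v,z).
\]
This is not directly helpful, since only one block is known to be minimal.

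The correct route goes the other way. Every block of length $z^v$ inside a minimizing interval of length $z^w$ contains at least $S^{-}(z^v,z)$ survivors, so $S^{-}(z^w,z)\ge \lfloor z^{w-v}\rfloor S^{-}(z^v,z)$. Normalising gives
\[
\frac{S^{-}(z^w,z)}{z^w}\ge(1-o(1))\frac{S^{-}(z^v,z)}{z^v}.
\]
Taking $\limsup$ yields $f^{+}(w)\ge f^{+}(v)$. This step is routine.

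\emph{Strict inequality $f^{+}(v)<1$.} By monotonicity it suffices to treat $v$ large, but we need every $v$, so we argue directly with Maier's matrix method in residue form. The plan is to exhibit, for each large $z$, residues $(\a_p)$ with $|(0,z^v]\cap\ccS_z|\le(1-\delta(v))e^{-\gamma}z^v/\log z$, where $\delta(v)>0$ is independent of $z$.

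First, choose $\a_p=0$ for all $p\le z^{1/u}$, with $u$ a large parameter depending on $v$. The survivors in $(0,y]$, $y=z^v$, are then the integers free of prime factors $\le z^{1/u}$. Their count is $\sim y\,\omega(uv)/\log z^{1/u}$, where $\omega$ is Buchstab's function.

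Second, for the primes $z^{1/u}<p\le z$, choose the $\a_p$ by a random or greedy argument. The expected number of remaining survivors is then exactly $\prod_{z^{1/u}<p\le z}(1-1/p)$ times the current count, and some choice attains at most the expectation. Using Mertens' theorem, the total is
\[
\sim e^{-\gamma}\frac{y}{\log z}\,e^{\gamma}\omega(uv).
\]
Maier's key fact is that $e^{\gamma}\omega(t)-1$ oscillates in sign and takes values below $0$ for arbitrarily large $t$. Choose $u$ with $uv=t_0$ such a point (and $t_0>1$ so that $u>1$). This gives $e^{\gamma}\omega(uv)<1$, hence $f^{+}(v)\le e^{\gamma}\omega(uv)<1$. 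Since $\omega$ is continuous, the asymptotics are uniform, so the constant is independent of $z$.

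\emph{Main obstacle.} The delicate point is justifying the second step uniformly. We need the conditional expectation over the random $\a_p$, for $p$ in $(z^{1/u},z]$, to factor as a product over primes even though the current survivor set is not random. This holds by linearity of expectation: each survivor $n$ is removed by $p$ independently with probability $1/p$. The remaining task is to combine this with the standard smooth-number asymptotic $\Phi(y,z^{1/u})\sim y\,\omega(uv)/\log z^{1/u}$ and with the fact that $\omega(t)<e^{-\gamma}$ infinitely often (Maier). With these in hand, the bound is immediate.
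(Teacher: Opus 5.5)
Your proof is correct, and it differs from the paper's in the argument for $f^{+}(v)<1$. Once the abandoned first attempt is deleted, your monotonicity step is the paper's interval-splitting argument.

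For the strict inequality, the paper takes $\a_p\equiv 0$ for \emph{all} $p\le z$, which gives $f^{+}(u)\le e^{\gamma}\omega(u)$ for each $u>1$. It then applies the monotonicity just proved to get $f^{+}(v)\le e^{\gamma}\inf_{u\ge v}\omega(u)$. This is $<1$ because $\omega(u)-e^{-\gamma}$ changes sign in every unit interval (Maier).

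You instead take $\a_p\equiv 0$ only for $p\le z^{1/u}$ and average over uniformly random $\a_p$ for $z^{1/u}<p\le z$. The first-moment bound combines $\Phi(z^v,z^{1/u})\sim u\,\omega(uv)z^v/\log z$ with Mertens' $\Theta_{z^{1/u},z}\sim 1/u$. This yields $f^{+}(v)\le e^{\gamma}\omega(uv)$ directly at level $v$.

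Both routes arrive at $f^{+}(v)\le e^{\gamma}\inf_{t\ge v}\omega(t)$ and use the same key input about Buchstab's function. The paper's is shorter because it recycles monotonicity. Yours makes the strict inequality independent of the monotonicity claim. It also exhibits a second comparison mechanism: sieving by additional primes with random residues preserves the normalized density in expectation, which fits the random-sieve framework of the paper.

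Two small corrections. First, you need $t_0\ge v$, not $t_0>1$, to guarantee $u=t_0/v\ge 1$, i.e.\ $z^{1/u}\le z$. This is harmless, since $\omega(t)<e^{-\gamma}$ for arbitrarily large $t$. Second, your sentence ``by monotonicity it suffices to treat $v$ large, but we need every $v$'' is self-contradictory. Monotonicity does reduce the claim to large $v$, and that reduction is exactly what the paper exploits.
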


 Before addressing the breakdown of the Poisson statistics, we recall some conjectures concerning the maximal gaps between primes. It was originally conjectured by Cram\'er that \[\ds{\limsup_{n\rightarrow{\infty}}}\,\frac{p_{n+1}-p_n}{\log^2 p_n}=1,\] \edit{the analog for his random set $\mathcal{C}$ holding almost surely.} However, Granville later argued this to be false by modifying Cramér's model and predicting that \[\ds{\limsup_{n\rightarrow{\infty}}}\,\frac{p_{n+1}-p_n}{\log^2 p_n}\ge 2e^{-\gamma}=1.12292\ldots.\] Most recently, Banks, Ford, and Tao \cite{ford2025}, as well as Granville and Lumley \cite{lumley}, refined Granville's conjecture. 

\smallskip
\begin{conj}\label{gapl}
    We have 
    \[\ds{\limsup_{n\rightarrow{\infty}}}\,\frac{p_{n+1}-p_n}{\log^2 p_n}=\frac1{f^{-}(2)}\ge 2e^{-\gamma},\]
    
    \smallskip 
    \noindent where the last inequality is true since $f^{-}(2)\le e^{\gamma}\,\omega(2)=e^{\gamma}/2$. It is a folklore conjecture that this inequality is, in fact, an equality. As stated before, under the assumption of Siegel zeros, one can show that $f^{-}(2)=0$, which would imply that the limit supremum diverges to infinity.
\end{conj} \edit{Assuming that this conjecture holds, for any constant $c\in (1
,2e^{-\gamma})$, there exists an infinite sequence of indices $(n_i)$ such that $p_{n_{i}+1}-p_{n_i}\ge c\log^2 p_{n_i}$. Choosing $x=p_{n_i}$ guarantees that for arbitrarily large $x$, there exists a prime gap strictly below $x$ of size at least $c\log^2 p_{n_i}\sim c\log^2 x$. For infinitely many $x$, this already contradicts the asymptotic relation in \eqref{tail} with $\l=c\log x$. Our goal here is to show that the breakdown occurs at much smaller values of $\l$.  We now state our result showing that \eqref{tail} and \eqref{interval2} fail to hold when $\l$ is a fixed small power of $\log x$.} 
\begin{thm}\label{UK4}
     Fix $c,c'\in (0,1)$ with $c<c'$. Assume that
 $\cA \subset \N$ satisfies \[{\mathbf{HL}}\left[ \log^{1+c'} x,\log^{c'} x\right].\] Set $\l\defeq \lambda(x)=\log^c x$. Then we have 
 \[
\frac{x}{\log x}\exp\Big(-\big(f(1+1/c)+o_{c,c'}(1)\big)\lambda\Big)\ge M_{\cA}(x;\l \log x)\ge\frac{x}{\log x}\exp\Big(-\big(f^{+}(1+1/c+)+o_{c,c'}(1)\big)\lambda\Big),\] as $x\rightarrow\infty$. Moreover, the same bounds hold for $N_{\cA}(x;\l \log x,0)$ as well.
           
\end{thm}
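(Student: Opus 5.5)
Throughout, set $y=\l\log x=\log^{1+c}x$. The plan is to transfer the problem to the Banks--Ford--Tao random sieve. Using $\mathbf{HL}[\log^{1+c'}x,\log^{c'}x]$, expand the indicator that $[n+1,n+y]\cap\cA$ has prescribed size by inclusion--exclusion (Bonferroni), truncated at depth $r\approx\log^{c'}x$. This expresses $M_\cA$ and $N_\cA(\cdot;\cdot,0)$ through sums of $\fS(\cH)$ over $\cH\subset[0,y]$ with $|\cH|\le r$. Rewriting each singular series as $\P(\cH\subset\ccS_{z})\prod_{p\le z}(1-1/p)^{-|\cH|}$, as in \eqref{probab}, these sums become moments of $X=|(0,y]\cap\ccS_z|$ with $z=z(t)\approx t^{e^{-\gamma}}$. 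Hence
\[
N_\cA(x;y,0)\approx\int^{x}\E\big[(1-\rho)^{X}\big]\,\dd t,\qquad \rho\approx\frac{1}{\log t}\prod_{p\le z}(1-1/p)^{-1}\approx 1,
\]
with an analogous formula for $M_\cA$. The truncation error from Bonferroni at depth $r$ is controlled by $\E\binom{X}{r}/\ldots$. The $\mathbf{HL}$ error $xe^{-\psi\log_2x}$ summed over at most $y^{r}=e^{(1+c)\log^{c'}x\log_2 x}$ sets is still harmless, because the hypothesis has $\psi=\log^{c'}x$ and $c<c'$. This gives room to take $r$ slightly larger than $\l=\log^c x$, which is where $c<c'$ is used.

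The main trick is that the sieve need only go up to a small level. Splitting $\ccS_z$ at $w=y^{1/v}$, the primes in $(w,z]$ act nearly independently and remove each survivor with probability about $1-\log w/\log z$. So, conditionally on $X_w=|(0,y]\cap\ccS_w|$, the quantity $\E(1-\rho)^X$ behaves like $\exp(-X_w\prod_{p\le w}(1-1/p))$, up to $1+o(1)$ in the exponent. Choose $v=1+1/c$, so that $w=y^{c/(1+c)}=\log^c x=\l$ after normalisation. Then $y=w^{v}$, and $X_w\prod_{p\le w}(1-1/p)$ equals $\l$ times the normalised sieve ratio $S/(e^{-\gamma}w^v/\log w)$.

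Upper bound on $M_\cA$: for every choice of residues, $X_w\ge S^-(w^v,w)\ge (f(v)-o(1))e^{-\gamma}w^v/\log w$ by Jurkat--Richert. This gives $M_\cA\le \frac{x}{\log x}\exp(-(f(v)+o(1))\l)$.

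Lower bound on $M_\cA$: by definition of $f^+(v+\v)$ we can choose extremal residues $(a_p)_{p\le w'}$, with $w'=y^{1/(v+\v)}$, such that $|(0,y]\cap\ccS_{w'}|\le (f^+(v+\v)+o(1))e^{-\gamma}y/\log w'$. The random event $\a_p=a_p$ for all $p\le w'$ has probability $\prod_{p\le w'}p^{-1}=e^{-(1+o(1))w'}$. Since $w'=o(\l)$ when $\v>0$ is fixed, this cost is negligible. On that event, the factor $\exp(-(f^+(v+\v)+o(1))\l)$ survives, and letting $\v\to0$ gives $f^+(1+1/c+)$. The same arguments apply to $N_\cA(x;y,0)$, since the extra condition $n\in\cA$ only contributes the factor $1/\log x$ via one more element of $\cH$.

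I expect the main obstacle to be the concentration step. One has to show rigorously that sieving by primes in $(w,z]$ contributes $(1+o(1))$ in the exponent, uniformly on the relevant configurations of $\ccS_w$, and that the truncated moment expansion at depth $r\approx\log^{c'}x$ reproduces $\E e^{-\ldots}$ even though we only want precision $e^{-O(\l)}$ relative to a main term that is itself $e^{-\Theta(\l)}$. For this I would first try a martingale/Azuma-type bound on $X$ as the primes in $(w,z]$ are revealed one by one. I would combine it with a bound on $\E\binom{X}{r}$ to control the Bonferroni tail once $r\ge C\l$.
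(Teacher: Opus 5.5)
Your architecture is the paper's. It shares the Bonferroni/$\mathbf{HL}$ reduction to $\P_{z(t)}(S_{z(t)}=0)$ with truncation depth $\asymp\l$ (Lemmas \ref{probru} and \ref{Ebo}) and the split at $w=\l$ so that $y=w^{1+1/c}$. It also applies Jurkat--Richert to every residue choice for the upper bound, which is Lemma \ref{crux} with $\v_1=c$, and fixes extremal residues at cost $e^{-o(\l)}$ for the lower bound.

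The gap is the step you flag yourself, and the tool you suggest cannot close it, because two different mechanisms are needed on two ranges.

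\emph{Medium primes, $p\in(w,y^{4/3}]$.} Here the survivors are strongly correlated: one class removes $\asymp y/p$ of them. So your near-independence heuristic is false on this range, and one needs concentration. The paper runs Azuma on $\Theta_{w,p}^{-1}S_p$ over this range only (Lemma \ref{f5}, which admits your $w=\l$). There $\sum c_p^2\ll y^2/(w\log^3 w)$, so at relative precision $\delta=(\log_2 x)^{-2/7}$ the failure probability is $\exp(-\Omega(\delta^2 w\log w))\ll e^{-3\l}$.

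\emph{Large primes, $p>y^{4/3}$.} Here Azuma is useless. The increment bounds stay $\asymp\log p/\log w$ at each of the $\approx\pi(z)$ steps. More to the point, $\{S_z=0\}$ is itself a large-deviation event of probability $e^{-\Theta(\l)}$, so you need its exact law, not a concentration bound. That law comes from Banks--Ford--Tao's Lemmas 6.1 and 6.3, packaged here as Lemma \ref{ford}. Since $[1,y]$ injects into $\Z/p\Z$ for $p>y$, inclusion--exclusion gives $\P(S_z=0\mid\cF_{y^8})=(1-\Theta_{y^8,z})^{S_{y^8}}(1+\O(y^{-5}))$. This is the rigorous form of your independence heuristic.

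The $e^{-3\l}$ failure bound matters only for the upper bound, where it must beat $e^{-f(1+1/c)\l}$. This is what pins the split point to $\l$ up to powers of $\log_2 x$. For the lower bound no concentration is needed at all. After fixing $\a_p=a_p$ for $p\le w'$, Jensen (Lemma \ref{lem:tower}) gives $\E\big[(1-\Theta_{y^8,z})^{S_{y^8}}\mid\cF_{w'}\big]\ge(1-\Theta_{y^8,z})^{\Theta_{w',y^8}S_{w'}}$, exactly as in the proof of Theorem \ref{thm:HLasymp}.

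Some smaller corrections follow.

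\emph{The exponent.} It should be $X_w\prod_{w<p\le z}(1-1/p)\approx e^{\gamma}X_w\log w/\log x$, not $X_w\prod_{p\le w}(1-1/p)$. With that fix, your identification with $\l$ times the normalised sieve ratio is right.

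\emph{The truncation depth.} Truncating at depth $r\approx\log^{c'}x$ is not harmless, since $y^r\cdot xe^{-\log^{c'}x\log_2 x}=x\exp(c\log^{c'}x\log_2 x)$. You must take $r\asymp\l$, as your later sentence suggests. The paper uses $\lfloor 800\l\rfloor\le\frac16\log^{c'}x$, which is one place where $c<c'$ enters.

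\emph{Treating $M_\cA$.} The paper never uses the measure conditioned on $0\in\ccS_z$ for this theorem. For the upper bound it uses $M_\cA\le N_\cA(x;y,0)$, since the factor $1/\log x$ is $e^{-o(\l)}$. For the lower bound it uses Lemma \ref{lo}, $M_\cA(x;y)\ge\big(N_\cA(x,x';y)-N_\cA(x,x';y')\big)/y'$ with $y'=\log^{1+(c+c')/2}x$. The subtracted term is $\le x\exp(-\Omega(\log^{(c+c')/2}x))$ by Lemma \ref{crux}. If you instead treat $M_\cA$ directly, your extremal residues must satisfy $a_p\not\equiv 0$. To arrange this, translate the extremal configuration so that its largest survivor $\le 0$ moves to $0$; this does not increase $|(0,y]\cap\ccS_{w'}|$.
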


\begin{rem}
    The right-hand limit in the lower bound arises because the extremal sieve estimate in the proof is applied at $v>1+1/c$ and $v$ approaches $1+1/c$ from above. Since $f^{+}$ is non-decreasing, this limit exists and $f^{+}(1+1/c+)<1$ by Proposition \ref{f+}.
\end{rem}

\smallskip
Since $f(v)\le f^{+}(v)\le 1$ and $f(v)\rightarrow1$ as $v\rightarrow\infty$, we have $f^{+}(1+1/c)\rightarrow 1$ as $c\rightarrow0$. Hence, to detect deviations from the Poisson statistics when $\lambda$ grows slower than any fixed power of $\log x$, a different approach is required. The following theorem shows that the Poisson statistics already break down when $\l\ge \l''$.
 \begin{thm}\label{thm:HLasymp}
     Fix $c\in (0,1)$. Assume that
 $\cA \subset \N$ satisfies \[{\mathbf{HL}}\left[ \log^{1+c} x,\log^{c} x\right].\] Let $\l=\l(x)\ge \l''$ (as defined in \eqref{eq:lambda_thresholds}) be a parameter such that $\l=(\log x)^{o(1)}$ as $x\to \infty$. Then, for all sufficiently large $x$, we have  \[ M_{\cA}(x;\l \log x) \ge xe^{-\l}\exp\bigg[\l\exp\Big(-\big(1/\sqrt{2}+o(1)\big)\sqrt{\log_2 x\log_3 x}\Big)\bigg].\] 
 Moreover, the same bound holds for $N_{\cA}(x;\l\log x,0)$.
 \end{thm}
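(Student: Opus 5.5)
We prove the lower bound for $N_{\cA}(x;\l\log x,0)$ first and deduce the one for $M_{\cA}$ at the end. Set $y\defeq\l\log x$. The idea is the Banks--Ford--Tao reinterpretation \eqref{probab}: via $\mathbf{HL}$, the count of empty intervals becomes a probability for the random sieve, and this probability can be bounded below by forcing an extremal (Maier-type) configuration of the residue classes $\a_p$ for small primes.

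\emph{Step 1: reduction to the random sieve.} By inclusion--exclusion (Bonferroni truncated at an even level $K\le\log^c x$),
\[
N_{\cA}(x;y,0)\ \ge\ \sum_{j\le K}(-1)^j\sum_{\substack{\cH\subset[1,y]\\|\cH|=j}}\big|\{n\le x: n+\cH\subset\cA\}\big|.
\]
Each inner count is evaluated by $\mathbf{HL}[\log^{1+c}x,\log^c x]$; this is admissible since $y=(\log x)^{1+o(1)}\le\log^{1+c}x$. The main terms reassemble, via \eqref{probab}, into $\int^x\E\big[\text{(truncated Bonferroni in }|\ccS_{z(t)}\cap(n,n+y]|)\big]\,\dd t$. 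The error terms contribute at most $\binom{y}{\le K}xe^{-\log^c x\log_2 x}$, which is negligible. The truncation at level $K$ is then removed by concentration: beyond mean $\asymp\l$, the variable $|\ccS_z\cap(0,y]|$ has Poisson-type tails, so the remainder is $\ll(e\l/K)^K$, which is tiny. This gives
\[
N_{\cA}(x;y,0)\ \ge\ (1-o(1))\int_{e^2}^x\P\big((0,y]\cap\ccS_{z(t)}=\varnothing\big)\,\dd t .
\]

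\emph{Step 2: forcing a good small-prime configuration.} Choose $w$ with $y=w^{u}$, where $u\to\infty$ slowly, and write $\ccS_{z}=\ccS_w\cap\ccS_{(w,z]}$. First, fix residues $\a_p$ for $p\le w$ achieving (near) $S^{-}(y,w)$. We need an interval-sieve fact stronger than $f^{+}<1$ as $v\to\infty$: at this scale the extremal count beats the random count by the stated factor,
\[
S^{-}(y,w)\le \frac{e^{-\gamma}y}{\log w}\Big(1-\exp\big(-(1/\sqrt2+o(1))\sqrt{\log_2x\log_3x}\big)\Big).
\]
We intend to obtain this from Maier's oscillation argument quantified in $u$, or from the de Bruijn--type bound $S^{-}-\text{random}\gg u^{-u(1+o(1))}$ with $u\approx\sqrt{\log_2x/\log_3x}$. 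This particular choice of $u$ is exactly what produces the constant $\sqrt2$ in $\l''$. The probability of this configuration is $\prod_{p\le w}p^{-1}=e^{-(1+o(1))w}$.

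Second, conditionally on that configuration, the survivors form a fixed set $T$ of size $|T|\approx\l\log z/\log w\cdot(1-\delta)$, where $\delta$ is the saving above. The primes in $(w,z]$ act independently, and $\P(T\cap\ccS_{(w,z]}=\varnothing)$ is handled as in Banks--Ford--Tao's Janson/concentration estimate. This gives roughly $\prod_{w<p\le z}(1-|T|/p)\approx\exp(-|T|\log(\log z/\log w))$ up to lower-order losses, i.e.\ about $e^{-\l(1-\delta)}$, with a correction factor of $\exp(O(\l/\log w))$. Multiplying the two probabilities gives
\[
\P\ \ge\ e^{-\l+\l\delta-O(w)-o(\l\delta)}.
\]
The hypothesis $\l\ge\l''$ (with its extra $\eta$) is precisely what guarantees that $\l\delta$ dominates both $w=y^{1/u}$ and the $(\log x)$-factor, since $\log\l$ exceeds $\sqrt2\sqrt{\log_2x\log_3x}$. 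Integrating over $t\le x$ gives the bound for $N_{\cA}$.

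\emph{Step 3: transfer to $M_{\cA}$.} Each maximal empty run of length $\ge y+\l\log x/\log_2x$ ending at an element of $\cA$ contributes many $n$ to $N_{\cA}(x;y,0)$. Conversely, $M_{\cA}(x;y)\ge N_{\cA}(x;y',0)/y'$ roughly, with $y'$ slightly larger than $y$. The division by $y'$ and the enlargement of $y$ cost only $e^{O(\l/\log_2x)}\cdot\log x$, which is absorbed into the $o(1)$ in the exponent. The main obstacle is the quantitative extremal sieve bound in Step 2: it requires an explicit rate in $u$ for Maier's deficiency, with the right dependence, rather than the qualitative statement $f^{+}(v)<1$ of Proposition \ref{f+}. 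The concentration step for primes in $(w,z]$ with a fixed, non-random $T$ must also be made uniform.
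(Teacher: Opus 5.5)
Your architecture matches the paper's. You pass to $\P(S_{z(t)}=0)$ via Lemma~\ref{probru} and Lemma~\ref{Ebo}, force a Maier-extremal configuration for $p\le w$ at cost $e^{-O(w)}$ (the paper takes $\log w=\frac{1}{\sqrt2}\sqrt{\log_2x\log_3x}$), and then sieve randomly above $w$. The interval-sieve input you call the main obstacle is exactly Lemma~\ref{maierlem}, which the paper simply cites.

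The real issue is precision. The entire gain is $e^{\l\delta}$ with $\delta=\exp(-(1/\sqrt2+o(1))\sqrt{\log_2x\log_3x})=w^{-1+o(1)}$. This is far smaller than both $1/\log w$ and $1/\log_2x$, so every loss must be $o(\l\delta)$ in the exponent, and two of your steps fail this.

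\emph{Step~2.} The product $\prod_{w<p\le z}(1-|T|/p)$ is the probability that all of $T$ \emph{survives}, not that all of $T$ is removed. The binomial form $(1-\Theta_{v,z})^{S_v}$ for the latter event comes from \cite[Lemma 6.3]{ford2025}, and only for primes $p>v=y^8$, where the elements of $[1,y]$ are distinct modulo $p$. For $p\in(w,y^8]$ you appeal to concentration and accept a factor $\exp(O(\l/\log w))$. That wipes out the gain, since $\l/\log w\gg\l\delta$. Azuma bounds as in Lemma~\ref{f5} cannot rescue this: they give relative precision only about $(w\log w)^{-1/2}=w^{-1/2+o(1)}\gg\delta$.

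The paper uses no concentration here. It conditions on $\cF_{y^8}$ and applies Jensen (Lemma~\ref{lem:tower}) together with the exact identity $\E[S_{y^8}\mid\cF_w=\a]=\Theta_{w,y^8}S_w$. This gives $\P(S_{z_1}=0\mid\cF_w=\a)\ge(1-O(y^{-5}))(1-\Theta_{y^8,z_1})^{\Theta_{w,y^8}S_w}$ with no loss; a one-sided Markov bound on $S_{y^8}$ would also do.

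For the same reason, Maier's bound must be kept in the form $y\Theta_w(1-\cdots)$ rather than with $e^{-\gamma}/\log w$. Also, the balance that produces $\sqrt2$ needs $u\sim\sqrt{2\log_2x/\log_3x}$, not $u\approx\sqrt{\log_2x/\log_3x}$.

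\emph{Step~3.} The inequality $M_\cA(x;y)\ge N_\cA(x;y',0)/y'$ is false in general, because a single long gap contributes arbitrarily many $n$ to $N_\cA(x;y',0)$. The valid form is Lemma~\ref{lo}, $M_\cA(x;y)\ge\big(N_\cA(x,x';y)-N_\cA(x,x';h')\big)/h'$, which requires an \emph{upper} bound for $N_\cA(x,x';h')$. Moreover, enlarging $y$ by $\l\log x/\log_2x$ costs $e^{-\l/\log_2x}$, and this is not absorbed: since $1/\log_2x\gg\delta$, it would push your bound below $xe^{-\l}$.

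The paper instead takes $h'=\log^{1+c/2}x$, far larger than $y$. Lemma~\ref{crux} then gives $N_\cA(x,x';h')\le x\exp(-(1-o(1))f(1+2/c)\log^{c/2}x)$, which is negligible against $xe^{-\l}$ because $\l=(\log x)^{o(1)}$. Dividing by $h'$ costs only $e^{O(\log_2x)}$, which is absorbed because $\l\delta\ge\exp((1/\sqrt2+o(1))\sqrt{\log_2x\log_3x})$. This is where the longer range $\phi=\log^{1+c}x$ in the hypothesis is used.

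A minor point: for a lower bound at $k=0$, the Bonferroni truncation level must be odd, not even.
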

  \noindent Under the Hardy--Littlewood hypothesis for the primes, Theorem \ref{thm:HLasymp} shows that Conjecture \ref{conj} does not hold over its full conjectured range. 
\smallskip
 \begin{rem}\label{1.4}
     Heuristically, one expects the asymptotic of $M_{\cA}(x;\l \log x)$ to include an additional density factor of $1/\log x$ relative to  $N_{\cA}(x; \l \log x,0)$, reflecting the density of the initial element in $\cA$. However, in both Theorems \ref{UK4} and \ref{thm:HLasymp}, this logarithmic factor is absorbed by the large exponential error terms, making the bounds for $M_{\cA}(x;\l \log x)$ and $N_{\cA}(x;\l \log x,0)$ identical up to the stated error.

     Furthermore, Theorem \ref{thm:HLasymp} explicitly shows the breakdown of the Poisson statistics for $N_{\cA}$ and the corresponding exponential gap distribution for $M_{\cA}$. The Cram\'er--Gallagher heuristic predicts a density strictly proportional to $e^{-\l}$. In contrast, our lower bound has the additional multiplicative factor of the order \[\exp\bigg[\l\exp\Big(-\big(1/\sqrt{2}+o(1)\big)\sqrt{\log_2 x\log_3 x}\Big)\bigg],\] which diverges to infinity as $x\rightarrow\infty$. Thus, the frequency of empty intervals in both settings exceeds the heuristic prediction.
  \end{rem}

\subsection{Discussion of the Main Results}\label{disc}
Before proceeding to the proofs, we briefly remark on a few technical aspects and the underlying assumptions.

\medskip
\emph{Optimizing the Hardy--Littlewood hypothesis:} The assumptions underlying our results, specifically the conditions on the function $\psi$ in the hypothesis $\mathbf{HL}[ \phi, \psi]$, can be sharpened. The admissible range for $|\cH|$ can be made fully explicit in terms of $\l$ and weakened, as is implicitly shown within the proofs of these theorems. Similarly, the error terms in our asymptotic formulas could be optimized. However, we have chosen to prioritize the clarity of exposition and transparency of the theorem statements over extracting the strongest possible estimates.

\medskip
\emph{Uniformity in the Hardy--Littlewood conjectures:} As illustrated by Proposition \ref{eq:HL-R}, it is expected that a wide class of random integer sequences obeys Hardy--Littlewood type conjectures with a level of uniformity that far exceeds the assumptions required by our theorems. In particular, this strong degree of uniformity, capable of accommodating substantially larger tuples and shift ranges, is expected to hold for the sequence of prime numbers.

\medskip
\emph{Deriving the exponential distribution:} For a fixed parameter $\lambda$ as in \eqref{exponen}, the classical exponential gap distribution is known to follow directly from the Gallagher--type singular series averages in \eqref{singular}. However, in our regime where $\lambda$ is allowed to grow with $x$, transitioning from the Gallagher-type result to the gap distribution does not seem feasible. Specifically, as noted in Remark \ref{1.4}, evaluating $M_{\mathcal{A}}$ involves an additional global density factor of $1/\log x$. Consequently, establishing the asymptotics for the gap distribution $M_{\mathcal{A}}$ requires arguments slightly different from the techniques used to evaluate the interval counting function $N_{\mathcal{A}}$.

\medskip
\emph{Comparison with prior work:} Our approach draws on the methods of \cite[Section 8]{ford2025}, but there are several subtle differences. In \cite{ford2025}, the authors deal with the parameter $\l$ growing like $\log x$, which allows them to bound the probabilities of exceptional sets in the random sieving process. In our case, however, we deal with much smaller values of $\l$, forcing a more delicate treatment to bound the probabilities of these sets. Furthermore, our analysis of the gap distribution $M_{\cA}$ requires enforcing the condition $0\in \ccS_z$ throughout the sieving process. This breaks the pure martingale structure of the randomly sieved sets. To circumvent this obstruction, we develop a generalized variant of Azuma's inequality (Lemma \ref{lem:Azuma-ineq}) to handle these restricted sets. Most importantly, the focus in \cite[Section 8]{ford2025} is primarily on establishing the existence of large prime gaps. This task requires only a positive lower bound for the counting function $N_{\cA}(x;\l \log x,0)$. On the other hand, the proofs of Theorems \ref{thm:HLgaps} and \ref{thm:HLpoisson} require a more careful analysis to bound the error terms originating from the sieving process.

\subsection{Plan of the paper} The remainder of this paper is organized as follows. In Section \ref{prelim}, we collect several foundational results from sieve theory and probability, including an extremal interval sieve estimate and a generalized version of Azuma’s inequality. Section \ref{sieve} is devoted to the random sieving process; here, we establish the basic probabilistic and combinatorial estimates required for the various regimes of $\lambda$. In Section \ref{section1}, we use a Brun-type sieve to prove Theorems \ref{thm:HLgaps} and \ref{thm:HLpoisson}, which establish the validity of the exponential and Poisson distributions for small $\lambda$. In Section \ref{section2}, we provide the proofs of Theorems \ref{UK4} and \ref{thm:HLasymp}, detailing the breakdown of these distributions in the large $\lambda$ regimes. Finally, Section \ref{6} discusses the technical barriers that prevent further improvements to our bounds, along with concluding remarks regarding the methods.

\subsection{Acknowledgments} The author thanks his advisor, Kevin Ford, for suggesting this problem and for many helpful discussions. The author also thanks Jaya M. Manthripragada for helping him understand various probabilistic ideas and formulate the arguments in Section \ref{rapid}.
During the preparation of this work, the author was supported in part by the National Science Foundation under grant DMS-2301264.
\section{Preliminaries}\label{prelim}

\subsection{Notation} 
We largely retain the probabilistic setup of \cite{ford2025}. The indicator function of any set $\cT$ is denoted $\mathds{1}_{\cT}(n)$. We select residue classes $\mathbf{a}_p\bmod p$ uniformly and independently at random for each prime $p$, and then for any set of primes $\mathcal{Q}$ we denote by $\cF_{\mathcal{Q}}$ the ordered tuple $(\mathbf{a}_p: p\in \mathcal{Q})$; often we condition our probabilities on $\cF_{\mathcal{Q}}$ for a fixed choice of $\mathcal{Q}$. In a similar vein, we define $\cF^{0}_{\mathcal{Q}}$ by assuming that $\mathbf{a}_p \neq 0$ for all $p\in \mathcal{Q}$; that is, selecting $\a_{p}$ uniformly from $\{1,\ldots,p-1\}$.

Probability and expectation are denoted by $\P$ and $\E$ respectively. We use $(\P_{\mathcal{Q}},\E_{\cQ})$ to denote the probability and expectation with respect to random $\cF_{\cQ}$.  When $\cQ$ is the
set of primes in $(c,d]$, we write 
$\cF_{c,d}$, $\P_{c,d}$ and $\E_{c,d}$; if $\cQ$
is the set of primes $\le c$, we write $\cF_c$, $\P_c$ and $\E_c$.
In particular, $\P_{c,d}$ refers to the probability
over random $\cF_{c,d}$, often with conditioning on $\cF_{c}$. Analogously, we write $(\P^0_{\mathcal{Q}},\E^0_{\cQ})$ to denote the probability and expectation with respect to random $\cF^0_{\cQ}$.

The symbol $\gamma$ is reserved for the Euler-Mascheroni constant. Implied constants in the standard asymptotic notations $\mathcal{O}$, $\ll$, $\gg$, and $\asymp$ are absolute unless otherwise specified. The notation $o(1)$ is used to indicate a function that tends to zero as $x \to \infty$; in expressions like $1-o(1)$, the $o(1)$ is assumed to be positive, and $F \sim G$ means $F = (1+o(1))G$.

\subsection{Results from Sieve and Probability Theory}

We collect here some standard results from sieve theory and probability that are used in the rest of the paper.
We begin by stating the fundamental lemma of the combinatorial sieve (see\cite[Theorem 6.12]{opera}), followed by the required lower and upper bound sieve estimates. Given a finite set of integers $\mathcal{A}$ and a finite set of primes $\mathcal{P}$, we define
$$
S(\mathcal{A}, \mathcal{P}) \defeq  \#\{a \in \mathcal{A} : (a, \mathcal{P}) = 1\},
$$
where the notation $(a, \mathcal{P}) = 1$ is shorthand for $(a,\prod_{p\in \ccP}p)=1$, denoting that $a$ has no prime factors from $\mathcal{P}$. Furthermore, we use the notation $d \mid \mathcal{P}$ to denote $d \mid \prod_{p \in \mathcal{P}} p$.

\smallskip
\begin{lem}[Fundamental lemma of sieve theory]\label{lmsieve}
Assume that for a finite set of integers $\mathcal{A}$ and a finite set of primes $\mathcal{P}$, there exist a nonnegative multiplicative function $g(d)$, a parameter $y$, and positive constants $\alpha$ and $C>1$ such that:

 $\bullet\,\,$ for any $d \mid \mathcal{P}$, \[|\{\mathfrak{a} \in \mathcal{A} : d \mid \mathfrak{a}\}| = \frac{g(d)}{d} \cdot {y} + r_d,\]

$\quad\quad$ and \[g(p) < p \quad \text{for all } p \in \mathcal{P},\]

 $\bullet\,\,$ for any real $v>w\ge 2$, \[\prod_{w < p \leq v, p \in \mathcal{P}} \left(1 - \frac{g(p)}{p}\right)^{-1} \le C \left(\frac{\log v}{\log w}\right)^\alpha .\]

In particular, if we take $\ccP$ to be the set of primes $p\le z$ and let $D\ge z\ge 2$, then uniformly for $\mathcal{A}, y,$ and $u=(\log D)/\log z\ge 4\alpha+2$, we have
$$
S(\mathcal{A}, \mathcal{P}) = y\prod_{p \in \mathcal{P}} \left( 1 - {g(p)}/{p} \right) \Big(1 + \theta C^{\eta}\exp\big[-u\log u +u\log\log (uC) +\O(u) \big]\Big) + \O\Big( \sum_{d \mid \mathcal{P}, d \leq D} |r_d| \Big),
$$
where $|\theta|\le 1$, while $\eta$ and implied constants depend only on $\alpha$.
\end{lem}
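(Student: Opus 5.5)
This is a standard result (\cite[Theorem 6.12]{opera}). If I had to reprove it, I would use the classical route: build upper and lower sieve weights by truncating the Möbius function, then estimate the discarded terms by Rankin's trick. Write $P(z)=\prod_{p\in\mathcal{P}}p$ and $V=\prod_{p\in\mathcal{P}}(1-g(p)/p)$.

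\textbf{Step 1: the weights.} I would construct real weights $\lambda_d^{\pm}$, supported on squarefree $d\mid P(z)$ with $d\le D$ and satisfying $|\lambda_d^\pm|\le 1$, such that
\[
\sum_{d\mid n}\lambda_d^-\le \mathds{1}_{n=1}\le \sum_{d\mid n}\lambda_d^+
\quad\text{for all } n\mid P(z).
\]
The concrete choice is the Brun/Rosser beta-sieve: put $\lambda_d^\pm=\mu(d)$ when $d=p_1\cdots p_r$ with $p_1>\cdots>p_r$ satisfies the truncation rules $p_1\cdots p_{m-1}p_m^{\beta+1}<D$ for the appropriate parity of $m$, and $\lambda_d^\pm=0$ otherwise. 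Here $\beta$ is a large constant depending on $\alpha$, which is where the hypothesis $u\ge 4\alpha+2$ enters.

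\textbf{Step 2: main term and remainder.} Summing the weights over $\mathfrak a\in\mathcal A$ with $n=(\mathfrak a,P(z))$ and inserting the hypothesis $|\{\mathfrak a: d\mid\mathfrak a\}|=yg(d)/d+r_d$ gives
\[
y\sum_d\lambda_d^-\frac{g(d)}{d}-\sum_{d\le D}|r_d|
\;\le\; S(\mathcal A,\mathcal P)\;\le\;
y\sum_d\lambda_d^+\frac{g(d)}{d}+\sum_{d\le D}|r_d|.
\]
This already produces the $\O\big(\sum_{d\mid \mathcal{P},\, d\le D}|r_d|\big)$ term.

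\textbf{Step 3: comparison with $V$.} It remains to show
\[
\sum_d\lambda^\pm_d\frac{g(d)}{d}=V\Big(1+\theta C^{\eta}e^{-u\log u+u\log\log(uC)+\O(u)}\Big).
\]
By Buchstab's identity, the difference $\sum_d\lambda_d^\pm g(d)/d-V$ is a signed sum over "first failure" terms. Each such term has the form
\[
\frac{g(d)}{d}\,V(p_r),\qquad d=p_1\cdots p_r,
\]
where $d$ is the first divisor violating the truncation, so $p_1\cdots p_r$ exceeds roughly $D/p_r^{\beta}$.

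\textbf{Step 4: the key estimate.} I would bound these terms by Rankin's trick. Insert the factor $(d\,p_r^{\beta}/D)^{s}$ with $s>0$, which is at least $1$ on every failing $d$. Then use the dimension hypothesis to control the resulting products, e.g.
\[
\prod_{w<p\le z}\Big(1+\frac{g(p)}{p}(p^{s}-1)\Big)\le \exp\big(\O(\alpha e^{s\log z})\big)C^{\O(1)}
\]
by partial summation against $\prod(1-g(p)/p)^{-1}\le C(\log v/\log w)^\alpha$. Choosing $s\log z\approx\log(u\log u)$ gives a bound of the shape $V\,C^{\eta}\exp(-u\log u+u\log\log(uC)+\O(u))$. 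The number of primes in $d$ is controlled because the beta condition forces the $p_m$ to decrease geometrically in logarithmic scale.

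\textbf{Main obstacle.} The delicate point is obtaining the full saving $e^{-u\log u}$ rather than just $e^{-cu}$. A single Rankin step only gives $e^{-cu}$. The stronger saving requires iterating the Buchstab recursion, equivalently an induction on $u$ showing that the error functions $F(u)-1$ and $1-f(u)$ of the beta-sieve satisfy
\[
\ll e^{-u\log u+u\log\log u+\O(u)}.
\]
I would do this via the delay-differential inequalities satisfied by those error functions, transferring the bound to finite $z$ using the $C$-dependence from the dimension hypothesis. That is where the constant $\eta=\eta(\alpha)$ arises.
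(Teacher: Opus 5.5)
Your opening citation is exactly the paper's approach: the paper gives no proof of this lemma, but quotes it from \cite[Theorem 6.12]{opera} and uses it as a black box (only with $g\equiv 1$, $\alpha=1$, $|r_d|\le 1$, in Lemma~\ref{f1}). That citation is all that is required.

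If you mean the reproof sketch as an actual argument, two of its steps would not go through as written.

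First, the displayed product bound in Step~4 has the wrong dependence on $C$. The dimension hypothesis only gives $\sum_{w<p\le v}g(p)/p\le \log C+\alpha\log(\log v/\log w)$ on each interval. So the primes in $(z^{1/2},z]$ may carry total mass $\asymp\log C$, spread over many primes with small $g(p)/p$, and each such prime is multiplied by $p^{s}-1\ge e^{(s\log z)/2}-1$. The product can then be as large as $C^{c(e^{(s\log z)/2}-1)}$. In general one only has $\exp\big(\mathcal{O}((\alpha+\log C)\,e^{s\log z})\big)$. With $e^{s\log z}\approx u\log u$ this is a power of $C$ that grows with $u$, not $C^{\mathcal{O}(1)}$, and it would swamp the saving. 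The Rankin exponent has to be chosen in terms of $C$ as well as $u$. That is what the term $u\log\log(uC)$ inside the exponent of the statement reflects.

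Second, your ``main obstacle'' paragraph contradicts Step~4, which already claims the full saving from one optimized Rankin step. The route it proposes also does not yield the lemma. The limiting $\beta$-sieve functions $F,f$ and their delay-differential equations describe the regime $z\to\infty$ with $u$ fixed. Any comparison of the finite sums $\sum_d\lambda_d^{\pm}g(d)/d$ with $V F(u)$, $V f(u)$ carries its own error term, typically $e^{-u}$ times a negative power of $\log D$. That is far larger than $e^{-u\log u}$ in the ranges that matter. For example, in Lemma~\ref{f1} $u$ is only about $\sqrt{\log y/\log_2 y}$, yet a relative error $\lambda_k^{-21/20}$ is needed. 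Moreover, the lemma must hold uniformly in $z\ge 2$ and for every $g$ satisfying only the one-sided dimension bound.

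The induction therefore has to be run on the finite first-failure sums themselves. Bound them uniformly in $z$ and $g$, using a Rankin factor whose exponent is optimized in both $u$ and $C$. Then control the loss $V(p_r)/V(z)$, which the hypothesis only bounds by about $C(\log z/\log p_r)^{\alpha}$, via the lower bound on $p_r$ (in terms of the earlier $p_m$) that the truncation rule forces.
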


\smallskip
\begin{lem}[Lower bound sieve,{\cite[Theorem 5]{sieve1}}]\label{lem:sieve-lower}
    Assume that for a finite set of integers $\mathcal{A}$ and a finite set of primes $\mathcal{P}$, there exists a parameter $y$ such that, for every $d\mid \ccP$,
 \[\big|\#\{\mathfrak{a} \in \mathcal{A} : d \mid \mathfrak{a}\} -{y}/{d}\big| \le 1.\]
Suppose further that $P$ consists of all primes $p\le z$. Then, for every fixed $\alpha>0$, uniformly for $\mathcal{A}\,\textrm{and}\, y$ in the range $u=(\log y)/\log z\ge 2+\alpha$, we have \[S(\cA,\ccP)\ge (1-o_{\alpha}(1))e^{-\gamma}f(u)\frac{y}{\log z}\quad(z\rightarrow\infty).\] In particular, we have $S(\cA,\ccP)\gg_{\alpha}y/\log z$ uniformly for $u\ge 2+\alpha$.
\end{lem}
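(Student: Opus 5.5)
The plan is to derive this directly from the Jurkat--Richert form of the linear (dimension one) lower bound sieve. Take $g(d)=1$ for every $d\mid\ccP$, so that $\#\{\mathfrak a\in\cA: d\mid \mathfrak a\}=y/d+r_d$ with $|r_d|\le 1$. Let $V(z)=\prod_{p\le z}(1-1/p)$. Since $g(p)=1$, the density condition of the linear sieve holds with an absolute constant, by Mertens' theorem. The Jurkat--Richert theorem then gives, for any level $D\ge z$ and $s=\log D/\log z\ge 2$,
\[
S(\cA,\ccP)\ \ge\ yV(z)\Big(f(s)-\O\big((\log D)^{-1/3}\big)\Big)\ -\sum_{\substack{d\mid \ccP\\ d<D}}|r_d|.
\]
Because $|r_d|\le 1$, the remainder sum is at most $D$.

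\textbf{Choice of level.} I would take $D=y/\log^2 y$. This choice gives three things.
\begin{itemize}
\item The remainder is at most $D=y/\log^2 y$. Since $\log y\ge (2+\alpha)\log z$, this is $o(y/\log z)$.
\item The sieve parameter is $s=u\,(1-2\log_2 y/\log y)$. As $z\to\infty$ we have $\log y\to\infty$, so $s=u(1-o(1))$ uniformly in $u\ge 2+\alpha$. In particular $s\ge 2+\alpha/2$ once $z$ is large in terms of $\alpha$.
\item The error term $\O((\log D)^{-1/3})$ is $o(1)$.
\end{itemize}
Finally, Mertens' theorem gives $V(z)=(1+o(1))e^{-\gamma}/\log z$.

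\textbf{Replacing $f(s)$ by $f(u)$.} It remains to pass from $f(s)$ to $f(u)$ uniformly in $u\ge 2+\alpha$. Here I use the standard facts about $f$: it is continuous, non-decreasing, and tends to $1$ as $v\to\infty$, and it is positive on $(2,\infty)$. Hence $f$ is uniformly continuous on $[2+\alpha/2,\infty)$, and it is bounded below there by $f(2+\alpha/2)>0$. The relative perturbation $s/u=1-o(1)$ gives an absolute perturbation $u-s=u\cdot o(1)$. For large $u$ this perturbation need not be small, but that does no harm: $f$ is already within $o(1)$ of $1$ there, and $s\to\infty$ along with $u$. Splitting into the cases $u\le U$ and $u>U$, with $U$ large, then gives $f(s)\ge f(u)-o_\alpha(1)$ uniformly. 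Since $f(u)\ge f(2+\alpha)>0$, this absolute error is also a relative error $(1-o_\alpha(1))f(u)$. Combining everything yields $S(\cA,\ccP)\ge (1-o_\alpha(1))e^{-\gamma}f(u)\,y/\log z$.

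\textbf{The ``in particular'' clause.} This follows at once from $f(u)\ge f(2+\alpha)>0$ for $u\ge 2+\alpha$.

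\textbf{Main obstacle.} The only real point of care is the uniformity in $u$ when $y$ is huge relative to $z$. One must check that shrinking the level from $y$ to $D$ costs only a relative $o(1)$ in $s$, and that the Jurkat--Richert error terms are uniform in $s\ge 2+\alpha/2$. Both follow from the explicit forms above.
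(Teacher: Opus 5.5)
Your proposal is correct. The paper gives no argument of its own and simply quotes Jurkat--Richert's Theorem 5, which is the $f(\log y/\log z)$ form of the linear lower-bound sieve under $|r_d|\le 1$. Your reduction is the standard way that form is obtained from the general Jurkat--Richert/Iwaniec bound: take level $D=y/\log^2 y$, bound the remainder trivially, and then use monotonicity and continuity of $f$ to pass from $f(s)$ to $f(u)$ uniformly.

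The only point to watch is which version you quote. If your remainder carries the weights $3^{\nu(d)}$, as in the original Jurkat--Richert and Halberstam--Richert statements, take $D=y/\log^{4}y$ instead; nothing else in your argument changes.
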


\smallskip
\begin{lem}[Upper bound sieve,{\cite[Theorem 3.8]{mont}}]\label{lem:sieve-upper}
Suppose that $(a,q)=1$, $(P,q)=1$, and $x$ and $y$ are real numbers with $y\ge 2q$. We have \[\big|\{x<n\le x+y:\,n\equiv a\bmod{q},\, (n,P)=1\}\big|\le \frac{e^{\gamma}y}{q}\(\prod_{{p\mid P,\; p\le \sqrt{y/q}}}\(1-\frac1{p}\)\)\(1+\O\pfrac1{\log (y/q)}\).\]
\end{lem}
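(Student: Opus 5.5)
This is a standard Brun--Titchmarsh-type estimate, and we plan to reprove it by the large sieve, following \cite[Theorem 3.8]{mont}. First we reduce to an unrestricted interval. Writing $n=a+qm$, the condition $x<n\le x+y$ places $m$ in an interval $I$ of length $N\defeq y/q\ge 2$. Since $(P,q)=1$, for each prime $p\mid P$ the condition $p\nmid a+qm$ excludes exactly one residue class of $m$ modulo $p$, namely $m\equiv -a\bar q\bmod p$. We only need an upper bound, so we discard every sieving condition with $p>\sqrt{N}$. This leaves a set $\ccP'$ of primes $p\mid P$ with $p\le \sqrt N$, and the count is at most the number of $m\in I$ avoiding one prescribed residue class modulo each $p\in\ccP'$.

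Next we apply the arithmetic large sieve in Montgomery's form. For any $Q\ge1$, a set of integers in an interval of length $N$ that avoids $\omega(p)=1$ class modulo each $p\in\ccP'$ has size at most $(N+Q^2)/L$, where
\[
L\defeq \sum_{\substack{d\le Q\\ d\mid \ccP'}}\frac{\mu^2(d)}{\varphi(d)}.
\]

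The key step is a lower bound for $L$ in terms of the product over $\ccP'$. Multiplying $L$ by $\prod_{p\le Q,\,p\notin\ccP'}(1-1/p)^{-1}=\prod_{p\le Q,\,p\notin \ccP'}\sum_{k\ge0}p^{-k}$ and expanding, every squarefree $d\le Q$ is represented. Indeed, write $d=d_1d_2$ with $d_1\mid\ccP'$ and $d_2$ free of primes in $\ccP'$; then $1/\varphi(d)=1/\varphi(d_1)\cdot 1/\varphi(d_2)$, and $1/\varphi(d_2)=\sum_{\mathrm{rad}(k)=d_2}1/k$ is recovered from the geometric series. Hence
\[
L\prod_{\substack{p\le Q\\ p\notin\ccP'}}\Big(1-\frac1p\Big)^{-1}\ \ge\ \sum_{d\le Q}\frac{\mu^2(d)}{\varphi(d)}\ \ge\ \log Q,
\]
so that
\[
L\ \ge\ \log Q\prod_{p\le Q}\Big(1-\frac1p\Big)\prod_{p\in\ccP',\,p\le Q}\Big(1-\frac1p\Big)^{-1}.
\]
By Mertens' theorem, $\log Q\prod_{p\le Q}(1-1/p)=e^{-\gamma}(1+\O(1/\log Q))$. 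Therefore, provided $Q\le\sqrt N$ is close to $\sqrt N$ on the logarithmic scale, $L\ge e^{-\gamma}\prod_{p\in\ccP'}(1-1/p)^{-1}(1+\O(1/\log N))$. The primes of $\ccP'$ lying between $Q$ and $\sqrt N$ must be accounted for, either by taking $Q$ close to $\sqrt N$ or by absorbing them into the error.

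The main obstacle is the choice of $Q$, which has to balance two competing losses. Taking $Q=\sqrt N$ costs a factor $N+Q^2=2N$, which is lethal for the constant $e^{\gamma}$ claimed in the statement. Taking $Q=\sqrt N/K$ gives $N+Q^2=N(1+K^{-2})$, but then the primes in $(Q,\sqrt N]$ cost a factor $\prod_{Q<p\le\sqrt N}(1-1/p)^{-1}=1+\O(\log K/\log N)$. We would first try to avoid this trade-off altogether. One option is the sharper Selberg $\Lambda^2$ weights supported on $d\le Q$ with the Montgomery--Vaughan quadratic-form bound, which gives the remainder in the form $N+Q^2$ applied to the Selberg minimum. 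The other is to compare with the weighted sum $\sum_{d\le Q}\mu^2(d)/\varphi(d)\ge \log Q+\gamma+\sum_p\frac{\log p}{p(p-1)}+o(1)$, whose extra constant helps to compensate. Following the bookkeeping of \cite[Theorem 3.8]{mont}, the aim is to check that, with the sieve truncated exactly at $p\le\sqrt{y/q}$, these losses combine into the factor $1+\O(1/\log(y/q))$ in the statement. Finally, we restore $N=y/q$ to obtain the stated bound $\frac{e^{\gamma}y}{q}\prod_{p\mid P,\;p\le \sqrt{y/q}}(1-1/p)\big(1+\O(1/\log(y/q))\big)$.
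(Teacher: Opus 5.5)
\textbf{There is a gap at exactly the point you flagged: the error factor $1+\O(1/\log(y/q))$.} Your reduction to $m$ in an interval of length $N=y/q$, the discarding of primes $p>\sqrt N$, and the Rankin-type bound $L\ge \log Q\prod_{p\le Q,\,p\notin\ccP'}(1-1/p)$ are all correct. But the proposal stops at the one step that carries the content of the lemma, and with the unweighted bound $(N+Q^2)/L$ that step cannot be done.

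Take $q=1$ and $\ccP'$ equal to all primes up to $\sqrt N$, and write $Q^2=tN$. The bound becomes
\[
\frac{N(1+t)}{\tfrac12\log N-\tfrac12\log(1/t)+\O(1)}=\frac{2N}{\log N}\Big(1+t+\frac{\log(1/t)}{\log N}+\O\Big(\frac1{\log N}\Big)\Big).
\]
Minimising over $t$ leaves a loss of $(1+o(1))\log\log N/\log N$ for every choice of $Q$.

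Your two proposed fixes do not close this:
\begin{itemize}
\item The extra constant in $\sum_{d\le Q}\mu^2(d)/\varphi(d)=\log Q+\O(1)$ cannot help. Controlling the factor $1+t$ forces $t=\O(1/\log N)$, so the deficit $\tfrac12\log(1/t)$ of $\log Q$ against $\tfrac12\log N$ is unbounded.
\item Selberg weights fed into an $N+Q^2$ quadratic-form bound face the identical trade-off.
\end{itemize}
Deferring to ``the bookkeeping of Montgomery--Vaughan'' is circular. That bookkeeping is the statement itself, which the paper simply quotes from \cite[Theorem 3.8]{mont} without proof.

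\textbf{The missing idea} is to keep the sieve level exactly at $\sqrt N$ while making the remainder smaller than the main term by a full logarithm. The standard route, which also explains the cutoff $p\le\sqrt{y/q}$ in the statement, is Selberg's $\Lambda^2$ sieve.
\begin{itemize}
\item Take $\lambda_d$ supported on $d\le z\defeq\sqrt N$ with $d\mid\ccP'$. Since $([d_1,d_2],q)=1$, each inner count is $N/[d_1,d_2]+r$ with $|r|\le 1$. Hence the count is at most $N/L(z)+\big(\sum_{d\le z}|\lambda_d|\big)^2$.
\item With $L_d(w)\defeq\sum_{m\le w,\,m\mid\ccP',\,(m,d)=1}\mu^2(m)/\varphi(m)$ and $\lambda_d=\mu(d)\frac{d}{\varphi(d)}L_d(z/d)/L_1(z)$, one has the identity
\[
\sum_{d\le z}|\lambda_d|=L_1(z)^{-1}\sum_{n\le z,\,n\mid\ccP'}\frac{\mu^2(n)\sigma(n)}{\varphi(n)}.
\]
\item This is $\ll z/\log z$. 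Use the elementary bound $\sum_{n\le z}f(n)\ll \frac{z}{\log z}\sum_{n\le z}f(n)/n$ for multiplicative $f$ bounded on primes, together with your own lower bound $L_1(z)\gg\prod_{p\in\ccP'}(1-1/p)^{-1}$.
\item So the remainder is $\O(N/\log^2N)$. Meanwhile $N/L_1(z)\ge N\prod_{p\in\ccP'}(1-1/p)\gg N/\log N$, so the remainder costs only a factor $1+\O(1/\log N)$.
\end{itemize}
Your Rankin-type lower bound for $L_1(\sqrt N)$ combined with Mertens' theorem then gives exactly the stated inequality.

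Alternatively, the Montgomery--Vaughan weighted large sieve, with weights $(N+\tfrac32 dQ)^{-1}$ at $Q=\sqrt N$, also works. You would run your comparison argument with that decreasing weight in place of the sharp cutoff $d\le Q$.
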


\begin{lem}[Brun's sieve]\label{lem:UK}
Suppose that $y \geq 1$. Let $\cN, \cA$ be sets of positive integers. Let $\Omega \subset[0,y]$ be a finite set of integers.  
For each $n\in \cN$, define the counting function \[v(n)\defeq \big|\{h\in \Omega:n+h\in\cA\}\big|.\]
\noindent \emph{(i) General Case (Generalization of \cite[Lemma 8.1]{ford2025}):}
 For an integer $k\ge 0$, define
\[
T\defeq \big|\{n\in \cN\,:\,v(n)=k\}\big|
\]
and
\[
U_K\defeq  \sum_{\ell=k}^K(-1)^{\ell-k}\binom{\ell}{k}\sum_{\substack{\cH\subset\,\Omega\\|\cH|=\ell}} \;
\sum_{n\in\cN} \;\prod_{h\in\cH} \mathds{1}_{\cA}(n+h)\qquad
(K\ge k).
\]
Then, for any $K$ with $K-k$ even we have $T\le U_K$,
and for $K-k$ odd we have $T\ge U_K$.

\vspace{0.5cm}
\noindent \emph{(ii) Fixed Point Case ($h=0$ is fixed):} Put
\[
T\defeq \sum_{n\in\cN\cap \cA}\prod_{h\in [1,y]}\(1-\mathds{1}_{\cA}(n+h)\)
\]
and
\[
U'_K\defeq  \sum_{\ell=0}^K(-1)^\ell\sum_{\substack{\cH\subset[1,y]\\|\cH|=\ell}} \;
\sum_{n\in\cN} \;\prod_{h\in\cH\cup \{0\}} \mathds{1}_{\cA}(n+h)\qquad
(K\ge 0).
\]
Then, for any even $K$ we have $T\le U'_K$,
and for any odd $K$ we have $T\ge U'_K$.
\end{lem}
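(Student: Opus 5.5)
Both parts follow from the classical Bonferroni inequalities, applied pointwise for each fixed $n\in\cN$ and then summed over $n$. Nothing about $\cA$ is used beyond the fact that $\mathds{1}_{\cA}$ is $0$--$1$ valued, so no arithmetic input enters.

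For part (i), I would fix $n\in\cN$ and write $v=v(n)$. The product $\prod_{h\in\cH}\mathds{1}_{\cA}(n+h)$ equals $1$ exactly when $\cH$ is a subset of the $v$-element set $\{h\in\Omega:n+h\in\cA\}$. Hence the contribution of $n$ to $U_K$ is
\[
c_K(v)\defeq\sum_{\ell=k}^{K}(-1)^{\ell-k}\binom{\ell}{k}\binom{v}{\ell}.
\]
Using $\binom{\ell}{k}\binom{v}{\ell}=\binom{v}{k}\binom{v-k}{\ell-k}$ and substituting $j=\ell-k$ gives
\[
c_K(v)=\binom{v}{k}\sum_{j=0}^{K-k}(-1)^j\binom{v-k}{j}.
\]
I would then apply the truncated alternating binomial identity
\[
\sum_{j=0}^{J}(-1)^j\binom{m}{j}=(-1)^J\binom{m-1}{J},
\]
valid for $m\ge1$ and proved by induction on $J$ via Pascal's rule. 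It yields three cases:
\begin{itemize}
\item $c_K(v)=0$ when $v<k$, since then $\binom{v}{k}=0$;
\item $c_K(v)=1$ when $v=k$, since then only $j=0$ survives;
\item $c_K(v)=(-1)^{K-k}\binom{v}{k}\binom{v-k-1}{K-k}$ when $v>k$.
\end{itemize}
Thus $c_K(v)=\mathds{1}_{v=k}+(-1)^{K-k}R(v)$ with $R(v)\ge0$. Summing over $n\in\cN$ gives $U_K=T+(-1)^{K-k}\sum_n R(v(n))$, so $T\le U_K$ when $K-k$ is even and $T\ge U_K$ when $K-k$ is odd.

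Part (ii) is a special case of (i). I would take $\Omega=[1,y]$, $k=0$, and replace $\cN$ by $\cN\cap\cA$. The factor $\mathds{1}_{\cA}(n+0)$ in $U'_K$ restricts the sum over $n\in\cN$ to $n\in\cN\cap\cA$, and $\binom{\ell}{0}=1$, so $U'_K$ coincides with $U_K$ for these data. Likewise, $T$ counts the $n\in\cN\cap\cA$ with $v(n)=0$, because $\prod_{h\in[1,y]}(1-\mathds{1}_{\cA}(n+h))$ is the indicator of $v(n)=0$. The parity conditions then follow directly from (i).

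The only step needing care is the alternating-sum identity together with the boundary cases $v\le k$, including $v=k$ where $m=v-k=0$ and the identity in the form above does not apply. Everything else is bookkeeping, since all sums are finite because $\Omega$ is finite and the relevant counts are assumed finite.
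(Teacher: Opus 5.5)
Your proof is correct and follows essentially the same route as the paper: evaluate $\sum_{\ell=k}^K(-1)^{\ell-k}\binom{\ell}{k}\binom{v(n)}{\ell}$ pointwise via $\binom{\ell}{k}\binom{v}{\ell}=\binom{v}{k}\binom{v-k}{\ell-k}$ and the alternating binomial sum, obtaining the error $(-1)^{K-k}\binom{v}{k}\binom{v-k-1}{K-k}$ of the correct sign, then sum over $n\in\cN$; part (ii) is deduced exactly as in the paper by taking $k=0$, $\Omega=[1,y]\cap\Z$ and $\cN\cap\cA$ in place of $\cN$. Your separate treatment of the boundary case $v=k$ is slightly more careful than the paper, whose closed-form tail $(-1)^{K-k+1}\binom{u}{k}\binom{u-k-1}{K-k}$ implicitly requires reading $\binom{-1}{K-k}$ as $0$ when $u=k$.
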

\begin{proof}
It suffices to prove the first part, as the second part follows immediately as a special case by setting $k=0$, $\Omega=[1,y]\cap\ \Z$ and replacing $\cN$ with $\cN\cap\cA$. 
For any integers $K,k,u\ge 0$ let
\be\label{d2}
\delta_K(u,k)\defeq \sum_{\ell=k}^K(-1)^{\ell-k}\binom{\ell}{k}\binom{u}{\ell}\quad \text{and}\quad
\delta(u,k)\defeq \begin{cases}
1&\quad\hbox{if $u=k$},\\
0&\quad\hbox{if $u\neq k$}.
\end{cases}
\ee
Observe that
\be\label{d1}
\delta(u,k)\le\delta_K(u,k)\quad{\(K\equiv k\bmod{2}\)}\quad
\delta(u,k)\ge\delta_K(u,k)\quad{\(K\equiv k+1\bmod{2}\)};
\ee
since:
\alg{\delta(u,k)=\sum_{\ell=k}^{\infty}(-1)^{\ell-k}\binom{\ell}{k}\binom{u}{\ell}&=\delta_K(u,k)+\binom{u}{k}\sum_{\ell=K+1}^{\infty}(-1)^{\ell-k}\binom{u-k}{\ell-k}\\&=\delta_K(u,k)+(-1)^{K-k+1}\binom{u}{k}\,\binom{u-k-1}{K-k}.}We have
\[
T=\sum_{n\in\cN}\delta(v(n),k)=\sum_{n\in\cN} \delta_K(v(n),k)+\theta,
\]
where $\theta\le 0$ if $K\equiv k\bmod{2}$ and $\theta\ge 0$ otherwise. Also,
\[
\sum_{n\in\cN} \delta_K(v(n),k)
=\sum_{\ell=k}^K(-1)^{\ell-k}\binom{\ell}{k}\sum_{n\in\cN}\binom{v(n)}{\ell}=U_K
\]
since
\[
\binom{v(n)}{\ell}=\sum_{\substack{\cH\subset\,\Omega \\|\cH|=\ell}}
\;\prod_{h\in\cH}\mathds{1}_{\cA}(n+h)\qquad(n\in\cN),
\]
and the lemma is proved. 
\end{proof}

It will be convenient to express the combinatorial bounds of the previous lemma in probabilistic language for our arguments in later sections. 

\begin{lem}[Probabilistic Brun's sieve]\label{probru}
    Suppose $\cA\subset \N$ satisfies $\mathbf{HL}[\phi,\psi]$. Let $x$ be sufficiently large such that $\phi(x)\le \log^2 x$ and $\psi(x)\le (\log x)/10 \log_2 x$. Furthermore, let $6\le y\le \phi(x)$ and $y^{100}\le x'\le x$. Let $K$ be a positive integer such that \[4\le K\le \min\{\psi(x)/6,y/3\}.\] For $t\ge e^2$, put $S_{z(t)}\defeq |\ccS_{z(t)}\cap [1,y]|$, and for $n\in [x',x]$, define the counting function \[v(n)\defeq |\{h\in [1,y]:n+h\in \cA\}|.\] 

\smallskip\noindent \emph{(i)} For any non-negative integer $k\le K$, we have \alg{\sum_{\substack{x'< n\le x \\ v(n)=k}}1&=\int_{x'}^{x}\P_{z(t)}\(S_{z(t)}=k\)\,\dd t+ \O\(\int_{x'}^{x}(K+1)^{k}\,\E_{z(t)}\binom{S_{z(t)}}{K+1}\,\dd t\)\,+\O\(\frac{x}{\log^{K} x}+x'y^{2K+2}\).}

\noindent \emph{(ii)} We have 
\[\sum_{\substack{x'< n\le x \\n\in \cA,\, v(n)=0}}1=\int_{x'}^{x}\P\(0\in \ccS_{z(t)}\)\( \P_{z(t)}^{\,0}(S_{z(t)}=0) +\O\(
\E_{z(t)}^{\,0}\binom{S_{z(t)}}{K+1}\)\)\, \dd t+\O\(\frac{x}{\log^{K} x}+x'y^{2K+2}\).\]
 \end{lem}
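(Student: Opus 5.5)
Part (i) follows by combining Lemma \ref{lem:UK}(i) with the hypothesis $\mathbf{HL}[\phi,\psi]$ and the probabilistic identity \eqref{probab}; part (ii) is the same argument with Lemma \ref{lem:UK}(ii).

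\emph{Step 1: reduce to correlation sums.} For part (i), apply Lemma \ref{lem:UK}(i) with $\cN=(x',x]\cap\Z$ and $\Omega=[1,y]$, once with $K$ and once with $K+1$. Exactly one of $K,K+1$ has the parity of $k$, so $T$ is sandwiched between $U_K$ and $U_{K+1}$. Their difference is $\binom{K+1}{k}$ times a single correlation sum over $|\cH|=K+1$, and $\binom{K+1}{k}\le (K+1)^k$. Hence
\[
T=U_K+\O\Bigl((K+1)^k\sum_{|\cH|=K+1}\sum_{n}\prod_{h\in\cH}\mathds{1}_{\cA}(n+h)\Bigr).
\]

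\emph{Step 2: apply the hypothesis.} For each $\cH\subset[1,y]$ with $|\cH|\le K+1\le\psi(x)$, evaluate $\sum_{x'<n\le x}\prod_{h\in\cH}\mathds 1_\cA(n+h)$ by \eqref{eq:HLA} at $x$ and at $x'$. This step is legitimate because:
\begin{itemize}
\item the shift by $h\le y\le\phi(x)$ is harmless;
\item $\mathbf{HL}$ counts $n\le x$ with $n+h\in\cA$, and the sum over $(x',x]$ is the difference of the counts at $x$ and $x'$.
\end{itemize}
The count at $x'$ is trivially at most $x'$. Since $\mathbf{HL}$ is only assumed for $x\ge x_0$, and $x'$ may be small relative to $x$, I avoid applying it at $x'$. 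Instead I bound all the $x'$ contributions trivially by $x'\cdot\#\{\cH\}\le x'\sum_{\ell\le K+1}\binom{y}{\ell}\ell^k\ll x'y^{2K+2}$.

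The main term at $x$ is $\fS(\cH)\int_2^x \log^{-|\cH|}t\,\dd t$. The total error is at most $\sum_{\ell\le K+1}\binom{\ell}{k}\binom{y}{\ell}\,x e^{-\psi\log_2 x}$. Since $y\le\log^2x$ and $K\le\psi/6$, this is at most $x\exp\bigl((2K+2)\log_2 x-\psi\log_2 x\bigr)\ll x/\log^K x$.

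\emph{Step 3: convert singular series to sieve probabilities.} This is the main obstacle. I need
\[
\fS(\cH)\log^{-|\cH|}t=\P(\cH\subset\ccS_{z(t)})\bigl(1+\O(\text{small})\bigr),
\]
uniformly for $|\cH|\le K+1$ and $\cH\subset[1,\log^2 x]$. I would use the definition of $z(t)$, namely $\prod_{p\le z}(1-1/p)^{-1}\le\log t$, which gives $\log^{-|\cH|}t=\prod_{p\le z}(1-1/p)^{|\cH|}(1+\O(|\cH|/z))$. The tail $\prod_{p>z}(1-|\cH\bmod p|/p)(1-1/p)^{-|\cH|}$ equals $1+\O(|\cH|^2\log y/z)$ once $z>y$, because for $p>y$ we have $|\cH\bmod p|=|\cH|$.

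For $t\le x^{1/2}$, say, I cut off and bound the integrand trivially. This contributes at most $\O(\sqrt{x}\,y^{2K+2})$, which is absorbed into the error terms given $\psi\le\log x/(10\log_2x)$. Integrating from $2$ to $x'$ is likewise absorbed into $x'y^{2K+2}$. The multiplicative errors sum to $\ll x/\log^Kx$ after using $\sum_{\cH}\P(\cH\subset\ccS)=\E\binom{S}{\ell}\le\binom{y}{\ell}$.

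\emph{Step 4: reassemble.} Interchanging sums gives
\[
\sum_{\ell=k}^K(-1)^{\ell-k}\binom{\ell}{k}\,\E_{z(t)}\binom{S_{z(t)}}{\ell}=\E_{z(t)}\,\delta_K(S_{z(t)},k).
\]
By the identity in the proof of Lemma \ref{lem:UK}, this equals $\P_{z(t)}(S_{z(t)}=k)+\O\bigl((K+1)^k\E\binom{S}{K+1}\bigr)$. Note $\binom{S}{k}\binom{S-k-1}{K-k}\le (K+1)^k\binom{S}{K+1}$ up to a constant.

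For part (ii), the same steps apply with $\cH\cup\{0\}$. The sieve probability factors as
\[
\P(\cH\cup\{0\}\subset\ccS_z)=\P(0\in\ccS_z)\,\P^0(\cH\subset\ccS_z),
\]
since conditioning on $0\in\ccS_z$ is exactly $\a_p\ne0$ for all $p\le z$. This produces the stated form.
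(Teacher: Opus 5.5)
Your proposal is correct and follows essentially the same route as the paper. You sandwich $T$ between $U_K$ and $U_{K+1}$ via Lemma \ref{lem:UK}, apply \eqref{eq:HLA} only at $x$ and bound the range $n\le x'$ trivially, convert $\fS(\cH)/\log^{|\cH|}t$ into $\P(\cH\subset\ccS_{z(t)})$, reassemble through $\delta_K$ versus $\delta$, and for (ii) factor through $\P(0\in\ccS_{z(t)})$. The one deviation is that you prove the singular-series comparison by hand for $t\ge\sqrt{x}$, using $|\cH \bmod p|=|\cH|$ for $p>z(t)>y$, and discard $[x',\sqrt{x}]$ trivially, whereas the paper cites \cite[Lemma 3.5]{ford2025} uniformly on $[x',x]$ using $x'\ge (K+1)^{100}$. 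Note that your ``trivial'' bounds on the small-$t$ and $[2,x']$ integrals still need the pointwise estimate $\fS(\cH)\ll (C\log y)^{|\cH|}$, which the paper states explicitly.
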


 \begin{proof}
    The proofs for both parts are similar, but we include them both for completeness. We retain the setup of the previous lemma. Let \[T_k\defeq \sum_{\substack{x'< n\le x \\ v(n)=k}}1.\]
    
    For (i), it follows from Lemma \ref{lem:UK} (i) (with $\cN=[x',x]\cap \Z$ and $\Omega=[1,y]\cap \Z$) that $T_k$ is bounded between $U_K$ and $U_{K+1}$. Let $M\in \{K,K+1\}$. By the definition of $U_M$ in Lemma \ref{lem:UK} (i), we apply \eqref{eq:HLA} at $x$. The contribution of the initial interval $n\le x'$, along with the corresponding main-term integral over $[2,x']$, is
    \[\ll x'\sum_{\ell=k}^{M}\binom{\ell}{k}\binom{y}{\ell}(1+(C\log y)^{\ell})\ll x'y^{2K+2},\] where we used the bound $\fS(\cH)\ll (C \log y)^{|\cH|}$. We have 
    \be\label{11}
    \begin{aligned}U_M=\int_{x'}^{x}&\,\sum_{\ell=k}^{M}(-1)^{\ell-k}\binom{\ell}{k}\frac{1}{\log^{\ell} t}\sum_{\substack{\cH\subset [1,y]\\|\cH|=\ell}}\fS(\cH)\,\dd t \\&+\O\(\sum_{\ell=k}^{M}\binom{\ell}{k}\binom{y}{\ell}xe^{-\psi(x)\,\log_2 x}\)+\O\(x'y^{2K+2}\).
    \end{aligned}
    \ee  
   For the sum in the error term, the terms in the sum are maximized at $\ell=M$ since $M\le K+1\le y/2$. Bounding the sum by its largest term and using the assumption $\psi(x)\ge 6K$, the error is bounded by 
    \alg{&\ll M\binom{M}{k}\binom{y}{M}\frac{x}{(\log x)^{6K}}\le (K+1)\binom{K+1}{k}\binom{y}{K+1}\frac{x}{(\log x)^{6K}}\\[2ex]&< x(K+1)^{k+1}y^{K+1}\log^{-6K}x\le xy^{2K+2}\log^{-6K}x\\[2ex]&\le x\log^{-K}x,} where the last estimate follows from $y\le \log^2 x$ and $K\ge 4$.
    By the assumption $x'\ge y^{100}\ge (K+1)^{100}$ and \cite[Lemma 3.5]{ford2025}, replacing $\fS(\cH)/\log^{\ell} t$ with $V_{\cH}(z(t))$ induces a relative error of size $\O(t^{-0.54})$ for $t\in [x',x]$. Summing over all subsets $\cH$ and integrating over $t\in [x',x]$ in \eqref{11}, we bound the absolute error by \[\ll (K+1)^{k+1}\binom{y}{K+1}\int_{x'}^{x}\frac{\dd t}{t^{0.54}}\ll (K+1)^{k+1}y^{K+1}x^{0.46}\ll x\log^{-K}x.\] 
    By the linearity of expectation and the observation that $V_{\cH}(z(t))=\P\(\cH\subset \ccS_{z(t)}\)$, we infer 
    \begin{align} U_M&=\int_{x'}^{x}\E_{z(t)}\left(\sum_{\ell=k}^{M}(-1)^{\ell-k}\binom{\ell}{k}\binom{S_{z(t)}}{\ell}\right)\dd t\,+\O\(x\log^{-K}x+x'y^{2K+2}\)\\&\label{14}=\int_{x'}^{x}\E_{z(t)}\(\delta_M(S_{z(t)},k)\)\dd t\,+\O(x\log^{-K}x+x'y^{2K+2}).\end{align} 
    We now focus on the inner expectation. Substituting $u=S_{z(t)}$ in \eqref{d2} and \eqref{d1}, it is immediate that the indicator function $\delta(S_{z(t)},k)$ is bounded between $\delta_K(S_{z(t)},k)$ and $\delta_{K+1}(S_{z(t)},k)$. 
    Thus, \be\label{d4}|\delta_M(S_{z(t)},k)-\delta(S_{z(t)},k)|\le \binom{K+1}{k}\binom{S_{z(t)}}{K+1}.\ee Taking expectations, we deduce \be\label{13}\E_{z(t)}\(\delta_M(S_{z(t)},k)\)=\P_{z(t)}\(S_{z(t)}=k\)+\O\((K+1)^{k}\,\E_{z(t)}\binom{S_{z(t)}}{K+1}\).\ee Combining \eqref{14} and \eqref{13}, we conclude that \[U_M=\int_{x'}^{x}\P_{z(t)}\(S_{z(t)}=k\)\,\dd t+ \O\(\int_{x'}^{x}(K+1)^{k}\,\E_{z(t)}\binom{S_{z(t)}}{K+1}\,\dd t\)\,+\O\(\frac{x}{\log^K x}+x'y^{2K+2}\).\] Since $T_k$ is bounded between $U_K$ and $U_{K+1}$, we are done.

\medskip
    We now turn our attention to (ii). As before, let \[T\defeq \sum_{\substack{x'< n\le x \\ n\in \cA,\,v(n)=0}}1.\] Applying Lemma \ref{lem:UK} (ii) (with $\cN=[x',x]\cap \Z$), we get that $T$ is bounded between $U'_K$ and $U'_{K+1}$. Let $M\in \{K,K+1\}$. Again, using the definition of $U'_M$ and applying $\eqref{eq:HLA}$, we obtain \be
    \begin{aligned}
\label{15}
U'_M=\int_{x'}^{x}&\,\sum_{\ell=0}^{M}(-1)^{\ell}\frac{1}{\log^{\ell+1} t}\sum_{\substack{\cH\subset [1,y]\\|\cH|=\ell}}\fS(\cH\cup\{0\})\,\dd t \\&+\O\(\sum_{\ell=0}^{M}\binom{y}{\ell}xe^{-\psi(x)\,\log_2 x}\)+\O\(x'y^{2K+2}\).
    \end{aligned}
    \ee Arguing as before, the first error term is bounded above by $x\log^{-K}x$. Replacing $\fS(\cH\cup\{0\})/\log^{\ell+1} t$ with $V_{\cH\cup\{0\}}(z(t))$ induces an overall error of same size. Therefore, \eqref{15} simplifies to \be \label{16}U'_M=\int_{x'}^{x}\sum_{\ell=0}^{M}(-1)^{\ell}\sum_{\substack{\cH\subset [1,y]\\|\cH|=\ell}}V_{\cH\cup\{0\}}(z(t))\,\dd t+\O\(x\log^{-K}x+x'y^{2K+2}\).\ee We need to express the inner summation in terms of expectations. Consequently, we expand the inner summation as follows: \alg{
\sum_{\substack{\cH\subset[1,y]\\|\cH|=\ell}}V_{\cH\cup\{0\}}(z(t))&=\sum_{\substack{\cH\subset[1,y]\\|\cH|=\ell}}\P(\cH\cup\{0\}\subset \ccS_{z(t)})\\&=\sum_{\substack{\cH\subset[1,y]\\|\cH|=\ell}}\P\(\cH\subset\ccS_{z(t)}\,|\, 0\in \ccS_{z(t)}\)\cdot\P\(0\in \ccS_{z(t)}\)
\\&=\P\(0\in \ccS_{z(t)}\)\cdot \E_{z(t)}^{\,0}\binom{S_{z(t)}}{\ell}.
} This implies that \begin{align} U'_M&=\int_{x'}^{x}\P\(0\in \ccS_{z(t)}\)\sum_{\ell=0}^{M}(-1)^{\ell}\,\E_{z(t)}^{\,0}\binom{S_{z(t)}}{\ell}\,\dd t+\O\(x\log^{-K}x+x'y^{2K+2}\)\\&\label{18}=\int_{x'}^{x}\P\(0\in \ccS_{z(t)}\)\,\E_{z(t)}^{\,0}\(\delta_M(S_{z(t)},0)\)\,\dd t+\O(x\log^{-K}x+x'y^{2K+2}).\end{align} Using the bound in \eqref{d4} and taking the conditional expectation, we arrive at the following estimate. \be\label{17}\E_{z(t)}^{\,0}\(\delta_M(S_{z(t)},0)\)=\P_{z(t)}^{\,0}\(S_{z(t)}=0\)+\O\(\E_{z(t)}^{\,0}\binom{S_{z(t)}}{K+1}\).\ee Along with \eqref{18}, this leads to the asymptotic  \[U'_M=\int_{x'}^{x}\P\(0\in \ccS_{z(t)}\)\( \P_{z(t)}^{\,0}(S_{z(t)}=0) +\O\(
\E_{z(t)}^{\,0}\binom{S_{z(t)}}{K+1}\)\)\, \dd t+\O\(\frac{x}{\log^K x}+x'y^{2K+2}\).\] This completes the proof since $T$ is bounded between $U'_K$ and $U'_{K+1}$. 
\end{proof}

\begin{lem}[Extremal interval sieve,{\cite[Proposition 3]{maier}}]\label{maierlem}
    For sufficiently large $z$, let $y=y(z)$ satisfy \[z\le  y\le \exp(\sqrt{z}),\quad u\defeq\frac{\log y}{\log z}\rightarrow\infty.\]  We have   
    \[S^{-}(y,z)\le y\,\prod_{p\le z}(1-1/p)\Big(1-\exp\big[{-(1+o(1))u\log u}\big]\Big).\]
\end{lem}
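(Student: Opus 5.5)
The plan is to build an explicit choice of residue classes $(\a_p)_{p\le z}$ in two stages, following Maier's idea. In the first stage we sieve a long initial range of primes with the trivial classes $\a_p=0$. There the surviving set is exactly the $z_1$-rough integers in $(0,y]$, and their count $\Phi(y,z_1)$ is known to undershoot the ``expected'' value $y\prod_{p\le z_1}(1-1/p)$ by an amount of the size in the statement. In the second stage we treat the remaining primes greedily; this never does worse than the expected density factor.

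\emph{Step 1 (choice of $z_1$).} Write $\Phi(y,w)$ for the number of $n\le y$ with no prime factor $\le w$. It is classical (de Bruijn, Hildebrand) that, in a suitable range,
\[
\Phi(y,w)=e^{\gamma}\,\omega\!\left(\tfrac{\log y}{\log w}\right) y\prod_{p\le w}\Big(1-\frac1p\Big)\,(1+\text{error}),
\]
where $\omega$ is Buchstab's function. It is also known that $e^{\gamma}\omega(v)-1$ changes sign in every interval of length $1$. Its extrema have size $\exp[-(1+o(1))v\log v]$, and this estimate holds on a positive proportion of each such interval. Accordingly, we choose $u_1\in[u,u+1]$ with
\[
e^{\gamma}\omega(u_1)-1\le -\exp[-(1+o(1))u_1\log u_1],
\]
and put $z_1=y^{1/u_1}\le z$. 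Since $u_1\log u_1=(1+o(1))u\log u$ as $u\to\infty$, we get
\[
\Phi(y,z_1)\le y\prod_{p\le z_1}(1-1/p)\Big(1-\exp[-(1+o(1))u\log u]\Big),
\]
provided the error term in the asymptotic is smaller than the deficit. This choice corresponds to taking $\a_p=0$ for all $p\le z_1$.

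\emph{Step 2 (greedy sieving for $z_1<p\le z$).} We process the primes in $(z_1,z]$ one at a time. If $R$ is the current surviving subset of $(0,y]$, then by pigeonhole some residue class mod $p$ contains at least $|R|/p$ elements of $R$. Choosing $\a_p$ to be that class multiplies $|R|$ by at most $1-1/p$. Iterating over all such primes gives
\[
|(0,y]\cap\ccS_z|\le \Phi(y,z_1)\prod_{z_1<p\le z}(1-1/p).
\]
Combined with Step 1, this yields the bound claimed in Lemma \ref{maierlem} for $S^{-}(y,z)$.

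\emph{Main obstacle.} The delicate point is the error term in Step 1. The deficit is of size $\exp[-(1+o(1))u\log u]$, which may be as small as roughly $\exp(-\sqrt z)$ when $y$ is near $\exp(\sqrt z)$. The naive de Bruijn error of relative size $1/\log z_1$ would swamp this. I would first try to avoid the asymptotic for $\Phi$ entirely and instead iterate Buchstab's identity
\[
\Phi(y,z_1)=\Phi(y,z_0)-\sum_{z_0<p\le z_1}\Phi(y/p,p)
\]
together with the fundamental lemma (Lemma \ref{lmsieve}). This should express $\Phi(y,z_1)/(y\prod_{p\le z_1}(1-1/p))-1$ as a main oscillating term, modelled by the Buchstab difference--delay equation, plus errors controlled by an $\exp(-(1+o(1))u\log u)$ factor with a better constant. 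Here the hypothesis $y\le\exp(\sqrt z)$ is what should keep the number of small primes, and hence the accumulated remainder terms, under control. If that route proves too lossy, the fallback is to quote Maier's computation \cite{maier} of the sign-changing second-order term directly.
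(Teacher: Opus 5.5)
\textbf{There is a genuine gap: the one nontrivial estimate is never proved.} Your reduction is sound and is the Maier matrix-method idea behind the cited result. With $\a_p=0$ for $p\le z_1$ the count is exactly $\Phi(y,z_1)$. Your pigeonhole step for $z_1<p\le z$ is correct and loses nothing, since it only multiplies by $\prod_{z_1<p\le z}(1-1/p)$. The paper gives no argument here; it imports the statement from Maier. So your fallback is what the paper does, but as a proof of the statement it is circular.

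The missing assertion is
\[
\Phi(y,z_1)\le y\prod_{p\le z_1}\Big(1-\frac1p\Big)\Big(1-\exp\big[-(1+o(1))u\log u\big]\Big),\qquad z\le y\le \exp(\sqrt z),\ u\to\infty ,
\]
uniformly. You rightly note that de Bruijn's relative error $1/\log z_1$ swamps the deficit, but you then only say a Buchstab iteration "should" work. Two inputs are missing:
\begin{itemize}
\item The sign-change lemma for $e^{\gamma}\omega-1$ (used for Proposition \ref{f+}) says nothing about the size of the negative excursions. Your claim that $e^\gamma\omega(u_1)-1\le-\exp[-(1+o(1))u_1\log u_1]$ for some $u_1\in[u,u+1]$ needs a quantitative asymptotic for $\omega(u)-e^{-\gamma}$, which you must cite.
\item You need an evaluation of $\Phi(y,z_1)\big/\big(y\prod_{p\le z_1}(1-1/p)\big)-e^{\gamma}\omega(u_1)$ with error smaller than $|e^\gamma\omega(u_1)-1|$, uniformly in the stated range. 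This is the real content of the lemma.
\end{itemize}

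\textbf{Why the suggested iteration does not reach the full range.} Write
\[
\Phi(y,z_1)-y\Theta_{z_1}=\big(\Phi(y,z_0)-y\Theta_{z_0}\big)-\sum_{z_0<p\le z_1}\Big(\Phi(y/p,p-1)-\frac yp\,\Theta_{p-1}\Big).
\]
The main terms telescope exactly, which is the good part of your idea. The deficit must then come from cancellation in the prime sum, weighted near $p\approx z_1$ by $e^\gamma\omega(\log(y/p)/\log p)-1$. These weights change sign each time $\log p$ moves by $\asymp \log p/u$, that is, on $p$-intervals of length $\asymp p\log p/u$.

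At the top of the range, $\log y\approx\sqrt z$, you have $u\log u\sim\frac12\log y$. So the deficit is $y^{-1/2+o(1)}$, and those intervals have length $p^{1/2+o(1)}$. Replacing the prime sums by integrals to the required accuracy would need the prime number theorem in (weighted) intervals of length $p^{1/2+o(1)}$, which is far beyond unconditional knowledge. The errors in the inductively estimated $\Phi(y/p,p-1)$ then compound on top of this. This is the same phenomenon that confines Hildebrand's asymptotic for $\Psi(x,y)$ to $\log y\ge(\log_2x)^{5/3+\epsilon}$.

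With Vinogradov--Korobov error terms, such an iteration can only be expected to work when $\log u=o\big((\log z)^{3/5-\epsilon}\big)$. That plausibly covers the regime actually used in the proof of Theorem \ref{thm:HLasymp}, where $\log z\asymp\sqrt{\log_2x\log_3x}$ and $u\asymp\sqrt{\log_2x/\log_3x}$. It does not cover the lemma as stated. For the full range you need a genuinely different input, and that input is exactly what your proposal leaves out.
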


Before proceeding, we supply the proof of the upper bound on the sifting function $f^{+}$ stated in the introduction. Recall Proposition \ref{f+}, which asserts that for all $v>1$, we have $f^{+}(v)<1$. 

\begin{proof}[Proof of Proposition \ref{f+}]
     First, we show that $f^{+}$ is non-decreasing. This is evident since if $A<B$, then any interval of length $z^{B}$ is the disjoint union of $\flr{z^{B-A}}$ intervals of length $z^{A}$ and one remaining interval of length at most $z^A$. By standard sieve estimates, we know that \[S(0,z^v,z)\sim \omega(v)\frac{z^v}{\log z},\] where $\omega$ is Buchstab's function, defined by $\omega(v)=1/v$ for $1\le v\le 2$, and $(v\,\omega(v))'=\omega(v-1)$ for all $v\ge 2$. By the monotonicity of $f^{+}$, we have $f^{+}(v)\le e^{\gamma}\ds{\min_{u\ge v}\,\omega(u)}.$ By \cite[Lemma 4]{maier2}, it is known that $\omega(v)-e^{-\gamma}$ has a sign change on every unit interval. Thus, we obtain $f^{+}(v)<1$.
\end{proof}
\begin{lem}\label{lem:tower}
Let $X$ be a finite-valued random variable and let $E$ be an event with $\P(E)>0$. For $0<a<1$, 
\[
\E\left[a^{X}\,|\,{E}\right]\ge a^{\E[X\,|\,{E}]}.
\]
\end{lem}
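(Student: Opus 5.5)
This is Jensen's inequality for the convex function $t\mapsto a^{t}=e^{t\log a}$, applied under the conditional probability measure $\P(\,\cdot\,|\,E)$. The only things to check are that this conditional measure is a genuine probability measure and that all expectations involved are finite.

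\textbf{Step 1 (setting up the measure).} Since $\P(E)>0$, the rule $\P_E(\cdot)\defeq\P(\cdot\cap E)/\P(E)$ defines a probability measure, and $\E[\,\cdot\,|\,E]$ is expectation with respect to $\P_E$.

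\textbf{Step 2 (finiteness).} Because $X$ takes only finitely many values $x_1,\dots,x_m$, both $\E[X\,|\,E]$ and $\E[a^X\,|\,E]$ are finite sums. Set $q_i\defeq \P_E(X=x_i)$, so that $q_i\ge0$ and $\sum_i q_i=1$.

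\textbf{Step 3 (convexity).} For $0<a<1$ the function $g(t)=a^t$ is convex on $\mathbb{R}$, since $g''(t)=(\log a)^2a^t>0$. Put $\mu\defeq\E[X\,|\,E]=\sum_i q_ix_i$. Convexity gives the tangent-line bound
\[
a^t\ge a^{\mu}+a^{\mu}\log a\,(t-\mu)\qquad\text{for all real } t.
\]

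\textbf{Step 4 (averaging).} Substitute $t=x_i$, multiply by $q_i$ and sum over $i$. The linear term vanishes because $\sum_i q_i(x_i-\mu)=0$, and what remains is
\[
\E[a^X\,|\,E]=\sum_i q_i a^{x_i}\ge a^{\mu}=a^{\E[X\,|\,E]},
\]
which is the claim.

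None of these steps is a real obstacle; the argument is the finite-support case of Jensen's inequality. In later use, one would apply the lemma with $E=\{0\in\ccS_z\}$, so that the conditional law is the one denoted $\P^0$.
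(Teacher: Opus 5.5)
Your proof is correct and is the same argument as the paper's: Jensen's inequality for the convex map $t\mapsto a^{t}$ under the conditional measure $\P(\cdot\,|\,E)$, which you verify directly in the finite-support case via the tangent-line bound. Your closing aside is inaccurate but does not affect the proof: the paper applies the lemma with $E=\{\cF_w=\a\}$, $X=S_{y^8}$ and $a=1-\Theta_{y^8,z_1}$ in the proof of Theorem~\ref{thm:HLasymp}, not with $E=\{0\in\ccS_z\}$.
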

\begin{proof}
     The result follows from Jensen's inequality, since $x\to a^x$ is convex for $0<a<1$.
\end{proof}

\begin{lem}[Azuma's inequality]\label{lem:Azuma}
Suppose that $X_0,\ldots,X_n$ is a submartingale sequence taking values in a finite set of real numbers with $|X_{i+1}-X_i|\le c_i$
for each $0\le i\le n-1$.  Then for all $\epsilon>0$,
$$
\mathbb{P}(X_n - X_0 \le -\epsilon) \le \exp\left(\frac{-\epsilon^2}{2 \left(c_0^{2}+\cdots+c_{n-1}^{2}\right)}\right).
$$
And symmetrically (when the sequence is a supermartingale):
$$
\mathbb{P}(X_n - X_0 \ge \epsilon) \le \exp\left(\frac{-\epsilon^2}{2 \left(c_0^{2}+\cdots+c_{n-1}^{2}\right)}\right).
$$
If the sequence is a martingale, using both inequalities above and applying the union bound allows one to obtain a two-sided bound:
$$
\mathbb{P}(|X_n - X_0| \ge \epsilon) \le 2\exp\left(\frac{-\epsilon^2}{2 \left(c_0^{2}+\cdots+c_{n-1}^{2}\right)}\right).
$$ 
\end{lem}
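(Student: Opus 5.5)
We prove the submartingale lower-tail bound; the supermartingale case follows by applying it to $-X_i$, and the two-sided bound for martingales by a union bound over the two one-sided estimates. Write $D_i\defeq X_{i+1}-X_i$ and let $\cF_i$ be the natural filtration generated by $X_0,\ldots,X_i$. Since the $X_i$ take values in a finite set, all conditional expectations are finite sums and no measure-theoretic subtleties arise. The submartingale property says $\E[D_i\,|\,\cF_i]\ge 0$, and by hypothesis $|D_i|\le c_i$.

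The approach is the standard exponential moment (Chernoff) method. Fix $s>0$. By Markov's inequality,
\[
\P(X_n-X_0\le -\epsilon)=\P\big(e^{-s(X_n-X_0)}\ge e^{s\epsilon}\big)\le e^{-s\epsilon}\,\E\big[e^{-s(X_n-X_0)}\big].
\]
The key step is Hoeffding's lemma in conditional form, applied to $Y\defeq -D_i$. This $Y$ satisfies $\E[Y\,|\,\cF_i]\le 0$ and $|Y|\le c_i$. By convexity of $y\mapsto e^{sy}$ on $[-c_i,c_i]$,
\[
e^{sY}\le \frac{c_i-Y}{2c_i}e^{-sc_i}+\frac{c_i+Y}{2c_i}e^{sc_i}.
\]
Taking conditional expectation and using $\E[Y\,|\,\cF_i]\le0$ gives
\[
\E[e^{sY}\,|\,\cF_i]\le \cosh(sc_i)+\frac{\E[Y\,|\,\cF_i]}{c_i}\sinh(sc_i)\le\cosh(sc_i)\le e^{s^2c_i^2/2}.
\]
The middle inequality uses $\sinh(sc_i)\ge0$ for $s>0$. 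The last inequality follows from the termwise comparison of Taylor series, $(2k)!\ge 2^k k!$. (If $c_i=0$, then $D_i=0$ and the bound is trivial.)

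We then peel off one increment at a time using the tower property:
\[
\E\big[e^{-s(X_n-X_0)}\big]=\E\Big[e^{-s(X_{n-1}-X_0)}\,\E\big[e^{-sD_{n-1}}\,\big|\,\cF_{n-1}\big]\Big]\le e^{s^2c_{n-1}^2/2}\,\E\big[e^{-s(X_{n-1}-X_0)}\big].
\]
Iterating this $n$ times yields
\[
\E\big[e^{-s(X_n-X_0)}\big]\le \exp\Big(\tfrac{s^2}{2}\sum_i c_i^2\Big),
\]
and therefore
\[
\P(X_n-X_0\le-\epsilon)\le \exp\Big(-s\epsilon+\tfrac{s^2}{2}\sum_i c_i^2\Big).
\]
Choosing $s=\epsilon/\sum_i c_i^2$ minimizes the right-hand side and gives the bound $\exp\!\big(-\epsilon^2/(2\sum_i c_i^2)\big)$ claimed in the statement.

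The only step requiring any care is the conditional Hoeffding estimate. What must be checked there is that the one-sided drift condition $\E[D_i\,|\,\cF_i]\ge0$ enters with the correct sign, which it does because the coefficient $\sinh(sc_i)$ is nonnegative and it multiplies $\E[-D_i\,|\,\cF_i]\le 0$. Everything else in the argument is routine.
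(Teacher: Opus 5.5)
Your proof is correct. The paper quotes this lemma as a standard fact without proof, but your argument is the same exponential-moment argument the paper uses to prove its generalized Azuma inequality (Lemma~\ref{lem:Azuma-ineq}), specialized to zero drift $d_i=0$: the chord bound for $e^{sy}$ on $[-c_i,c_i]$, dropping the drift term since $\sinh(sc_i)\ge 0$, the bound $\cosh(sc_i)\le e^{s^2c_i^2/2}$, peeling off increments via the tower property, and optimizing $s$.
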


\begin{lem}[Generalized Azuma's inequality]\label{lem:Azuma-ineq}
Suppose that $X_0,\ldots,X_n$ is a sequence of real-valued random variables taking only finitely many values and adapted to a filtration $(\mathcal{F}_i)$ such that \[X_i\le \E(X_{i+1}|\cF_i)\le X_i+d_i\] with $|X_{i+1}-X_i|\le c_i$ and $d_i\ge 0$ for each $0\le i\le n-1$. Then,
\[
\P\big( |X_n-X_0| \ge t \big) \le 2\exp\left\{-\frac{t^{2}}{8\left(c_0^{2}+\cdots+c_{n-1}^{2}\right)}\right\}\qquad(t> 2(d_0+\cdots +d_{n-1})),
\] 
\end{lem}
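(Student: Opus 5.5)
The plan is to reduce the generalized inequality to the standard Azuma inequality (Lemma \ref{lem:Azuma}) through the Doob decomposition. For $0\le i\le n-1$ define the predictable increments
\[
D_i\defeq \E(X_{i+1}\,|\,\cF_i)-X_i\in[0,d_i],
\]
and put $A_0\defeq 0$ and $A_m\defeq \sum_{i<m}D_i$. Then
\[
Y_m\defeq X_m-A_m,\qquad 0\le m\le n,
\]
is a martingale with respect to $(\cF_i)$, since $\E(Y_{m+1}-Y_m\,|\,\cF_m)=\E(X_{m+1}\,|\,\cF_m)-X_m-D_m=0$. Each $D_i$ is a function of finitely many finite-valued variables, so $Y_m$ also takes only finitely many values, and Lemma \ref{lem:Azuma} applies.

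Next I would bound the martingale increments. We have
\[
|Y_{m+1}-Y_m|=|X_{m+1}-X_m-D_m|\le |X_{m+1}-X_m|+D_m.
\]
Using $c_m$ for the first term is fine, but $D_m\le d_m$ is not obviously controlled by $c_m$. The cleaner route is Jensen: $D_m=\E(X_{m+1}-X_m\,|\,\cF_m)$, so $|D_m|\le \E(|X_{m+1}-X_m|\,|\,\cF_m)\le c_m$. Hence $|Y_{m+1}-Y_m|\le 2c_m$, and the sum of squared increments is at most $4\sum c_i^2$.

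Finally I would transfer the deviation bound from $Y$ back to $X$. Set $\Sigma\defeq\sum_i d_i$, so that $0\le A_n\le\Sigma$. For $t>2\Sigma$ we have $A_n<t/2$. If $|X_n-X_0|\ge t$, then
\[
|Y_n-Y_0|=|X_n-X_0-A_n|\ge t-A_n>t/2.
\]
The two-sided form of Lemma \ref{lem:Azuma} with $\epsilon=t/2$ and increments bounded by $2c_i$ then gives
\[
\P(|X_n-X_0|\ge t)\le \P(|Y_n-Y_0|\ge t/2)\le 2\exp\Big(-\frac{(t/2)^2}{2\cdot 4\sum c_i^2}\Big)=2\exp\Big(-\frac{t^2}{32\sum c_i^2}\Big).
\]

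This is a factor of 4 weaker than the stated constant $8$. The main obstacle is therefore to recover the sharper constant, and here is how I would do it.

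First, replace the crude bound $2c_m$ by a range bound. Conditionally on $\cF_m$, the increment $Y_{m+1}-Y_m$ lies in the interval $[-c_m-D_m,\,c_m-D_m]$, which has length $2c_m$. Hoeffding's lemma then gives
\[
\E\big(e^{s(Y_{m+1}-Y_m)}\,\big|\,\cF_m\big)\le e^{s^2c_m^2/2}.
\]
This is the same moment-generating bound as for increments bounded by $c_m$. Rerunning the exponential-moment (Chernoff) proof of Azuma with this bound yields $\P(|Y_n-Y_0|\ge \epsilon)\le 2\exp(-\epsilon^2/(2\sum c_i^2))$.

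Taking $\epsilon=t/2$ as before then gives exactly
\[
\P(|X_n-X_0|\ge t)\le 2\exp\Big(-\frac{t^2}{8\sum c_i^2}\Big),
\]
which is the stated bound. The steps to check carefully are the measurability and predictability of $A_m$, and that the strict inequality $t>2\Sigma$ is used to guarantee $A_n<t/2$.
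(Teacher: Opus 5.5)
Your argument is correct and yields exactly the stated constant, but it is organized differently from the paper's proof.

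\textbf{Your route.} You remove the predictable drift through the Doob decomposition $Y_m=X_m-A_m$ and apply the conditional Hoeffding lemma to the recentered increments. The key observation is that $Y_{m+1}-Y_m$ still lies in an interval of length $2c_m$ with $\cF_m$-measurable endpoints. This avoids the crude increment bound $2c_m$ and its factor-$4$ loss. You then transfer the deviation back to $X$ using $|A_n|\le\sum d_i<t/2$.

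\textbf{The paper's route.} The paper never recenters. It applies the chord (convexity) bound for $e^{\alpha x}$ on the symmetric interval $[-c_i,c_i]$ directly to $X_{i+1}-X_i$. Combining this with $\E(X_{i+1}-X_i\mid\cF_i)\le d_i$, $\sinh x\le x\cosh x$ and $\cosh x\le e^{x^2/2}$ gives
\[
\E\big(e^{\alpha(X_{i+1}-X_i)}\mid\cF_i\big)\le e^{d_i\alpha+\alpha^2c_i^2/2}.
\]
A Chernoff bound then gives the upper tail $\exp\{-(t-\sum d_i)^2/(2\sum c_i^2)\}$. The lower tail comes separately from the submartingale case of Lemma~\ref{lem:Azuma}, with no loss.

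\textbf{Comparison.} The two one-step estimates actually coincide. Since $e^{\alpha(X_{i+1}-X_i)}=e^{\alpha D_i}e^{\alpha(Y_{i+1}-Y_i)}$, your Hoeffding step gives $e^{\alpha D_i+\alpha^2c_i^2/2}\le e^{\alpha d_i+\alpha^2c_i^2/2}$, which is the paper's bound.

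\begin{itemize}
\item The paper's version is more elementary: it needs only the symmetric chord inequality, not Hoeffding's lemma for asymmetric intervals. It also keeps the lower tail at the sharper rate $t^2/(2\sum c_i^2)$.
\item Your version is more structural and treats both tails uniformly. It only uses $|D_i|\le d_i$, so it would also cover drift of either sign, whereas the paper's lower-tail step relies on $D_i\ge 0$.
\item You replace $t$ by $t/2$ in the lower tail as well, which the paper avoids. This is immaterial for the stated bound.
\end{itemize}
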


\begin{proof}
 For a fixed $\alpha> 0$, we consider the convex function $f(x) = e^{\alpha x}$.
For any $|x| \le c$, $f(x)$ is below the line segment from $(-c, f(-c))$ to $(c, f(c))$. In
other words, we have \[e^{\alpha x}\le \frac1{2c}(e^{\alpha c}-e^{-\alpha c})x +\frac1{2}(e^{\alpha c}+e^{-\alpha c}).\]

Thus, we get 
\begin{align*}\E\(e^{\alpha(X_{i+1}-X_{i})}\mid \cF_{i}\)&\le \E\left(\frac1{2c_{i}}(e^{\alpha c_i}-e^{-\alpha c_i})(X_{i+1}-X_{i}) +\frac1{2}(e^{\alpha c_i}+e^{-\alpha c_i})\mid \cF_{i}\right)\\&\le \frac{d_{i}}{2c_i}(e^{\alpha c_i}-e^{-\alpha c_i})+\frac1{2}(e^{\alpha c_i}+e^{-\alpha c_i})\\& =\frac{d_i}{c_i}\sinh(\alpha c_i)+\cosh(\alpha c_i).\end{align*} We know that $ \sinh(x) \le x\cosh(x)$ and $ \cosh(x)\le e^{x^{2}/2}$ for $x\ge 0$. Therefore, \[\E\(e^{\alpha(X_{i+1}-X_{i})}\mid \cF_{i}\) \le d_i \alpha \cosh(\alpha c_i)+\cosh(\alpha c_i)\le e^{d_i\alpha+ \alpha^2c_i^2/2}.\] Consider the moment generating function $\E\(e^{\alpha (X_{n}-X_0)}\)$. We will split this up as 
\alg{\E\(e^{\alpha (X_{n}-X_0)}\)&=\E\(e^{\alpha(X_{n}-X_{n-1})}\cdot e^{\alpha (X_{n-1}-X_0)}\)\\&=\E\(\E\(e^{\alpha (X_{n}-X_{n-1})}\mid \cF_{n-1}\)e^{\alpha (X_{n-1}-X_0)}\).}
This implies that 
\[\E\(e^{\alpha (X_{n}-X_0)}\)\le e^{d_{n-1}\alpha+\alpha^{2} c_{n-1}^{2}/2}\, \E\(\e^{\alpha (X_{n-1}-X_0)}\).\] Proceeding inductively, \[\E\(e^{\alpha (X_n-X_0)}\)\le e^{{  \sum_{i=0}^{n-1} (\alpha^{2}c_i^{2}}/{2}+d_i\alpha)}.\] By Markov's inequality on the moment generating function, we have:
\begin{align*}
    \P(X_n-X_0\ge t)&= \P\big(e^{\alpha(X_n-X_0)}\ge e^{t\alpha}\big)\\&\le e^{-\alpha t}\E\( e^{\alpha(X_n-X_0)}\)\\&\le e^{-\alpha t}e^{{  \sum_{i=0}^{n-1} (\alpha^{2}c_i^{2}}/{2}+d_i\alpha)}\\&=\exp\left\{{-\frac{\left(t-\sum_{i=0}^{n-1}d_i\right)^{2}}{2\sum_{i=0}^{n-1}c_i^{2}}}\right\}\\&\le \exp\left\{{-\frac{t^{2}}{8\left(\sum_{i=0}^{n-1}c_i^{2}\right)}}\right\},
\end{align*}
where we choose $\alpha=(t-\sum_{i=0}^{n-1}d_i)/{(c_0^2+\cdots+c_{n-1}^2)}$ to minimize the probability.

\smallskip
On the other hand, we know that the sequence $(X_{i})$ forms a submartingale. Applying Azuma's inequality for submartingales (Lemma \ref{lem:Azuma}), we get that \[\P\left(X_n-X_0\le{ -t}\right)\le \exp\left\{{-\frac{t^{2}}{2\left(c_0^{2}+\cdots+c_{n-1}^{2}\right)}}\right\}\qquad(t> 0).\] Combining both tail bounds, we obtain our result.
\end{proof}

\begin{rem}
    The terms $d_i$ account for the small positive drift that arises later in the sieving process when the pure martingale structure is lost, an obstruction we discussed earlier in Section \ref{disc}.
\end{rem}

\section{Random Sieving}\label{sieve}

Throughout the sequel, we employ the notation  $$\Theta_{z}\defeq \prod_{p\le z}\(1-\frac{1}{p}\)\qquad\text{  and   }\qquad\Theta_{x,y}\defeq \prod_{x< p \le y}\(1-\frac1{p}\)=\frac{\Theta_y}{\Theta_x},$$ along with \[\hat\Theta_{x,y}\defeq \prod_{x< p\le y}\left(1-\frac1{p-1}\right)=\frac{\hat\Theta_y}{\hat\Theta_x},\quad\text{where }\quad \hat\Theta_{z}\defeq \prod_{2<p\le z}\(1-\frac1{p-1}\).\] 

\vspace{0.2cm}
\noindent Throughout this section, we assume $x$ is sufficiently large and define \[y\defeq \lambda\log x\quad\( 1/\sqrt{\log x}\le \l\le \log x\).\] Furthermore, we assume that $k$ is a non-negative integer satisfying $\max\{k,k\log (k/\l)\}\le  \l'$.
\noindent  We define
\[
\ccS_w(y) \defeq  [1,y]\cap\ccS_w
\]
and when the value of $y$ is clear from context we put
\[S_w\defeq |\ccS_w(y)|.\] We denote \be\label{lk}\lambda_k\defeq \max\{\lambda,k, k\log (k/\l), (\log_2 x)^{200}\}.\ee We also define the $k=0$ analog as \be\label{l0}\lambda_0\defeq \max\{\lambda, (\log_2 x)^{200}\}.\ee  Our estimates depend crucially on the growth rate of $\l$ relative to $x$. To facilitate the exposition, we classify the various regimes of $\l$ used throughout the paper as follows.
\bigskip
\[
\begin{array}{|c|c|c|c|}
\hline

\textbf{Regime} & \textbf{Range of } {\lambda} & \textbf{Key Application} \\
\hline
\text{Arbitrary} & \lambda \ge 1/\sqrt{\log x} & \S \ref{rapid3} \\
\hline
\text{Slow Growth} & 1/\sqrt{\log x} \le \lambda \le \l' & \text{Lem. \ref{f1}--\ref{f4}, Cor. \ref{cor:w1w5}, \S \ref{section1}}  \\
\hline
\text{Rapid Growth} & \l=\log^c x \quad(0<c<1) & \text{Lem. \ref{f5}, \S \ref{rapid1} }  \\
\hline
\end{array}
\]
\smallskip
\subsection{Sieving for Small Primes (Using Fundamental Lemma)}\label{lsmall}

\begin{lem}\label{f1}
    Let $\l,k$ satisfy 
    \[\max\{k,k\log (k/\l)\}\le\exp\(\frac1{\sqrt{15}}\sqrt{\log_2 x\log_3 x}\),\quad \textrm{and}\quad\frac1{\sqrt{\log x}}\le \l\le \l'.\] We have\begin{equation}\label{31}
        S_{\l_k^{25/8}}=y\,\Theta_{\l_k^{25/8}}\left(1 + \mathcal{O}\left( 1/{\lambda_k^{21/20}}\right)\right). 
    \end{equation} 
\end{lem}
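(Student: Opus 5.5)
The idea is to treat $S_{w}$, with $w\defeq\lambda_k^{25/8}$, as a deterministic interval-sieve count that holds uniformly for every choice of the residues $(\a_p)_{p\le w}$, and then apply the fundamental lemma (Lemma \ref{lmsieve}). Fix any residues $\a_p \bmod p$ for $p\le w$ and let $\cA\defeq[1,y]\cap\Z$. The condition $n\in\ccS_w$ says that $n\not\equiv \a_p\bmod p$ for every $p\le w$. For squarefree $d\mid\prod_{p\le w}p$, the Chinese remainder theorem turns the simultaneous conditions $n\equiv \a_p \bmod p$ for $p\mid d$ into a single residue class modulo $d$. Hence
\[
\#\{n\in[1,y]:n\equiv \a_p\bmod p\ \ \forall\, p\mid d\}=\frac{y}{d}+r_d,\qquad |r_d|\le 1 .
\]
So the sieve axioms of Lemma \ref{lmsieve} hold (after the harmless shift by the class) with $g\equiv1$, $\alpha=1$ and an absolute constant $C$ coming from Mertens' theorem. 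The main term is then $y\,\Theta_w$, independently of $(\a_p)$.

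Next I choose the level $D\defeq y\,\lambda_k^{-3}$. The remainder satisfies $\sum_{d\le D}|r_d|\le D$, which is at most $y\Theta_w\lambda_k^{-21/20}$ because $\Theta_w\asymp 1/\log\lambda_k\gg\lambda_k^{-1/2}$. It remains to show that the fundamental-lemma error $\exp[-u\log u+O(u\log\log u)]$, with $u=\log D/\log w$, is at most $\lambda_k^{-21/20}$. Since $\lambda_k\ge(\log_2 x)^{200}$, we have $\log D=(1+o(1))\log y$. Write $L\defeq\log_2x\log_3x$. I split into two cases.

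\begin{itemize}
\item \textbf{Case $\lambda\ge1$.} Here $\log y\ge\log_2 x$ and $\log\lambda_k\le\frac{1}{\sqrt7}\sqrt L$, because $\lambda'$ dominates both the $k$-terms and $(\log_2x)^{200}$. This gives $u\ge(1+o(1))\frac{8\sqrt7}{25}\sqrt{\log_2x/\log_3x}$. With $\log u=(\tfrac12+o(1))\log_3x$ we get $u\log u\ge(\tfrac{4\sqrt7}{25}-o(1))\sqrt L\approx0.423\sqrt L$. This exceeds $\tfrac{21}{20}\log\lambda_k\le\frac{21}{20\sqrt7}\sqrt L\approx0.397\sqrt L$.
\item \textbf{Case $1/\sqrt{\log x}\le\lambda<1$.} Now only $\log y\ge(\tfrac12+o(1))\log_2x$ is available. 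On the other hand $\lambda_k$ is governed by $k$ or $(\log_2x)^{200}$, so $\log\lambda_k\le\frac{1}{\sqrt{15}}\sqrt L$. The same computation gives $u\log u\ge(\tfrac{4\sqrt{15}}{50}-o(1))\sqrt L\approx0.31\sqrt L$, which beats $\frac{21}{20\sqrt{15}}\sqrt L\approx0.27\sqrt L$.
\end{itemize}

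In both cases the lower-order terms $O(u\log\log u)$ and the factor $C^\eta$ are absorbed. The hypothesis $u\ge4\alpha+2$ holds trivially.

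The main obstacle is this bookkeeping of constants, since the margins are thin (0.423 versus 0.397, and 0.31 versus 0.27). I would be careful to take $D$ as close to $y$ as the remainder allows, so that $\log D=(1+o(1))\log y$. I would also keep the precise $(\tfrac12+o(1))\log_3 x$ for $\log u$, and check that the $o(1)$ terms, coming from $\eta(x)$-free quantities, are genuinely negligible. Combining the main term, the remainder bound and the sieve error yields \eqref{31} uniformly in $(\a_p)$, and in particular for the random sieved set.
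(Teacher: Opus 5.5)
Your proof is correct and follows the paper's argument: the fundamental lemma with $g\equiv1$, $\alpha=1$ applied to the shifted interval uniformly in $(\a_p)$, with level $D$ just below $y$ and constants checked so that $u\log u$ beats $\tfrac{21}{20}\log\lambda_k$. The paper takes $D=y^{125/128}$ (i.e.\ $u=\tfrac{5}{16}\log y/\log\lambda_k$) and avoids your case split by first showing $\lambda_k\le\exp\big(\tfrac{1}{\sqrt7}\sqrt{\log y\log_2 y}\big)$ in both regimes, whereas your $D=y\lambda_k^{-3}$ gives slightly more room; one small correction is that $\log D=(1+o(1))\log y$ follows from the upper bound $\log\lambda_k\ll\sqrt{\log_2x\log_3x}=o(\log y)$, not from the lower bound $\lambda_k\ge(\log_2x)^{200}$.
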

\begin{proof}
   Let $w_1$ be a real parameter, $P(w_1)=\prod_{p\le w_1}p$ and $\ccP=\{p\leq w_1\}$. Our strategy to prove this result is based on the following observation: Let $b$ be the unique integer less than $P(w_1)$ satisfying the congruences $b\equiv -a_{p}\pmod{p}$ for all primes $p\leq w_1$. Then by definition, 
   \[S_{w_1}=S(\mathcal{A}, \mathcal{P})=\sum_{\substack{b+1\le n\le b+y\\ (n,\ccP)=1}}1,\] where $\mathcal{A}=\{n\in \Z: b+1\le n\le b+y\}$. Now, for any $d\mid P(w_1)$, we have \[|\#\{\mathfrak{a} \in \mathcal{A} : d \mid \mathfrak{a}\}-y/d|\le 1.\] Choosing $g(d)=1$, we obtain that $|r_d|\le 1$. With the choices above, the two conditions in Lemma \ref{lmsieve} hold. Thus, applying Lemma \ref{lmsieve} with $\alpha=1$, $z=w_1$ and $D=w_1^u$, we obtain 
    \begin{equation}\label{fl}
        S_{w_1}= y\,\Theta_{w_1} \Big(1 + \theta \,e^{-u\log u+u\log_2 u+\O(u)} \Big) + \mathcal{O}\bigg( \sum_{d \mid P(w_1), d \leq w_1^{u}} 1 \bigg),
    \end{equation}
    uniformly for $u\ge 6$ and $|\theta|\le 1$. 
    Since $\log y\geq \frac12\log_2x$, with $\log y\geq\log_2x$ when
$\lambda\geq1$, and $\log_2y=\log_3x+O(1)$, the hypotheses imply, for
sufficiently large $x$, that
\[
\lambda_k\leq
\min\left\{\lambda',
\exp\left(\frac1{\sqrt7}\sqrt{\log y\log_2y}\right)\right\}.
\]Let $w_1\defeq \l_k^{25/8}$ and $u\defeq 5h/16$ where \[h\defeq \frac{\log y}{\log \l_k}\ge \sqrt{\frac{7\log y}{\log _2 y}},\] and hence, \be\label{hbound} h \log h\ge \({\sqrt{7}}/{2}+o(1)\)\sqrt{\log y\log_2 y}\ge (7/2+o(1))\log \l_k.\ee We shall appeal to this estimate momentarily. We have $w_1^{u}= y^{125/128}$. Therefore, \begin{equation}\label{fl1}\sum_{d \mid P(w_1), d \leq w_1^{u}} 1 \le y^{125/128}.
    \end{equation}
    Using our choice of $u$ and the estimate \eqref{hbound}, we compute: 
    \[u\log u=(5/16+o(1))h\log h\ge (35/32+o(1))\log \l_k. \] 
    Employing this bound with \eqref{fl} results in the estimate
    \alg{
        S_{w_1}&=y\,\Theta_{w_1}\bigg(1 + \mathcal{O}\Big(e^{-u\log u+u\log_2 u+\O(u)}\Big)\bigg)+ \mathcal{O}\left(y^{125/128}\right) \\&=y\,\Theta_{w_1}\left(1 + \mathcal{O}\left( 1/{\lambda_k^{21/20}}\right)\).\qedhere}         
\end{proof}
\begin{rem}\label{rem:constants}
The constants and parameters chosen in the preceding and subsequent sieve arguments are selected to balance three competing constraints. To successfully bound the error terms, we require parameters $\eta_1, \eta_2, \eta_3 > 0$ such that:
\begin{itemize}
    \item \emph{The Sieve Error:} In Section 4, we transition from the local sieve density $\Theta_{w_1}$ to the global density $1/\log x$. This requires the fundamental lemma error to be strictly smaller than $\l_k^{-1}$, which forces the condition $u\log u\ge (1+\eta_1)\log \l_k$.
    \item \emph{The Azuma Variance:} To achieve an $\exp(-3\lambda_k)$ tail bound in the upcoming Lemmas \ref{f3} and \ref{f4}, the martingale variance requires a lower bound $w_1 \ge \lambda_k^{3+\eta_2}$.
    \item \emph{The Level of Distribution:} For the combinatorial sieve estimates to hold, we require $w_1^u \le y^{1-\eta_3}$.
\end{itemize}
These conditions collectively force the regime boundary constant $\eta$ $\Big($where $\log \lambda_k \le \frac{1}{\sqrt{\eta}}\sqrt{\log y \log_2 y}$$\Big)$ to strictly satisfy \[\eta > 2 \frac{3+\eta_2}{1-\eta_3}(1+\eta_1).\] Taking the limit as our parameters vanish shows that $\eta > 6$ is the limiting barrier. Thus, when $\lambda\geq1$, the limiting constant produced by this approach is $\sqrt6$. We take $\eta=7$ in the proof
and use the constant $\sqrt{15}$ in the statement of Lemma \ref{f1} to
obtain a convenient bound that is uniform also for $\lambda<1$.
\end{rem}

\subsection{Sieving for Medium-Sized Primes (Using Azuma's Inequality)}

\begin{lem}\label{f3}
 Let $w_1\defeq  \lambda_0^{25/8}$ and
$w_2\defeq  y^{4/3} $ where $ \l\le\l'$.
Conditional on $\cF^0_{w_1}$ satisfying $S_{w_1}\gg y/\log w_1$, we have
\[
\P_{w_1,w_2}^{\,0}\bigg(|S_{w_2}-\hat\Theta_{w_1,w_2}\,S_{w_1}|
\ge {\hat\Theta_{w_1,w_2}\,S_{w_1}}/\lambda_0^{17/16}\bigg)\ll \exp(-3\lambda_0).
\] 
\end{lem}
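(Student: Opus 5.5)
We treat the sieving by the primes in $(w_1,w_2]$ as a process with a nearly martingale structure and apply the generalized Azuma inequality (Lemma \ref{lem:Azuma-ineq}). Conditional on $\cF^0_{w_1}$, list the primes $w_1<p_1<\cdots<p_m\le w_2$, and let $\cF_i$ record the classes $\a_{p_1},\ldots,\a_{p_i}$, each drawn uniformly from the nonzero classes. Define
\[
X_i\defeq \frac{S_{p_i}}{\hat\Theta_{w_1,p_i}},\qquad X_0=S_{w_1}.
\]
When $\a_{p_{i+1}}$ is uniform on $\{1,\ldots,p-1\}$, a point $n\in\ccS_{p_i}(y)$ is removed with probability $\mathds{1}_{p\nmid n}/(p-1)$, where $p=p_{i+1}$. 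Hence
\[
\E(S_{p_{i+1}}\mid\cF_i)=S_{p_i}\Big(1-\frac1{p-1}\Big)+\frac{\#\{n\in\ccS_{p_i}(y):p\mid n\}}{p-1}.
\]
Dividing by $\hat\Theta_{w_1,p_{i+1}}$ gives $X_i\le\E(X_{i+1}\mid\cF_i)\le X_i+d_i$ with
\[
d_i\ll\frac{\#\{n\le y:\,p\mid n\}}{(p-1)\hat\Theta_{w_1,p}}\ll\frac{y}{p^2}.
\]
Here we use $\hat\Theta_{w_1,w_2}\asymp\log w_1/\log w_2\ge\log w_1/\log y$, which is harmless. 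The total drift is therefore
\[
\sum_i d_i\ll \frac{y}{w_1\log w_1}\cdot\frac{\log y}{\log w_1}.
\]
We must compare this with the target deviation $t\defeq S_{w_1}/\lambda_0^{17/16}\gg y/(\lambda_0^{17/16}\log w_1)$. The comparison works because $w_1=\lambda_0^{25/8}$ dominates $\lambda_0^{17/16}$ even after the factor $\log y/\log w_1$ is included; note that $\log y\le\log_2 x+\log\lambda_0$ while $\lambda_0\ge(\log_2 x)^{200}$. So $t>2\sum d_i$ for large $x$. The event in the lemma is precisely $|X_m-X_0|\ge t$.

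For the increments: adding one prime $p$ removes at most $\lceil y/p\rceil$ elements of $[1,y]$. So
\[
|X_{i+1}-X_i|\le\frac{S_{p_i}}{\hat\Theta_{w_1,p_{i+1}}}\cdot\frac1{p-1}+\frac{y/p+1}{\hat\Theta_{w_1,p_{i+1}}}\ll\frac{\log p}{\log w_1}\Big(\frac yp+1\Big)\eqqcolon c_i.
\]
Splitting at $p=y$ gives
\[
\sum_{p\le y}c_i^2\ll\frac{y^2\log y}{w_1\log^2 w_1},\qquad \sum_{y<p\le y^{4/3}}c_i^2\ll\frac{y^{4/3}\log y}{\log^2 w_1}.
\]
Lemma \ref{lem:Azuma-ineq} then yields a probability bound of
\[
\exp\Big(-\frac{t^2}{8\sum c_i^2}\Big)\le\exp\Big(-c\,\min\Big\{\frac{w_1}{\lambda_0^{17/8}\log y},\ \frac{y^{2/3}}{\lambda_0^{17/8}\log y}\Big\}\Big).
\]
Since $w_1/\lambda_0^{17/8}=\lambda_0^{1}$, the first term gives $\exp(-c\lambda_0/\log y)$. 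This is too weak by the $\log y$ factor, and it is the main obstacle: the exponent must reach $3\lambda_0$.

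To fix this, we will bound $c_i$ more carefully. The term $S_{p_i}/(p\,\hat\Theta)$ involves the actual surviving count, which is $\asymp y/\log p$ rather than $y$, and the number of removed survivors is at most $y/p+1$ but typically about $y/(p\log p)$. A cleaner route is to insert a stopping time at the first $i$ where $S_{p_i}$ deviates by a factor of $2$ from its expected size $\hat\Theta_{w_1,p_i}S_{w_1}$. Before that time, $c_i\ll (y/p)\cdot(\log p/\log w_1)$ only through the trivial count of multiples of $p$. We will then use the sharper sum
\[
\sum_p\Big(\frac{y\log p}{p\log w_1}\Big)^2\ll\frac{y^2}{w_1\log w_1},
\]
whose gain comes from weighting by $\log p$ rather than $\log y$. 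This gives an exponent $\gg w_1\log w_1/\lambda_0^{17/8}\ge\lambda_0\log\lambda_0\ge3\lambda_0$. The tail range $p>y$ contributes $y^{2/3}$-type exponents, which are harmless because $y\ge(\log x)^{1/2}$ is much larger than $\lambda_0^{10}$. If the stopping refinement proves unnecessary, the direct sum already suffices, since we are working with the correct variance bound.
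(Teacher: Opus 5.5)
\textbf{There is a genuine gap in the increment bound.} Your setup matches the paper: the process $X_i=S_{p_i}/\hat\Theta_{w_1,p_i}$, the drift computation, the check $t>2\sum d_i$, and the appeal to Lemma~\ref{lem:Azuma-ineq} are all as in the paper's proof. The problem is the bound on the increments, which is the one place where the argument has to do real work.

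Your diagnosis of the first pass is correct: the trivial count $y/p+1$ of removed points only gives $\exp(-c\lambda_0/\log y)$. The proposed fix does not change this. It keeps the same $c_i\asymp \frac{y\log p}{p\log w_1}$ and only sums it more carefully, which gives $\sum c_i^2\ll y^2/(w_1\log w_1)$. Since $t=S_{w_1}/\lambda_0^{17/16}\asymp y/(\lambda_0^{17/16}\log w_1)$, you get
\[
\frac{t^2}{\sum c_i^2}\asymp \frac{y^2}{\lambda_0^{17/8}\log^2 w_1}\cdot\frac{w_1\log w_1}{y^2}=\frac{\lambda_0}{\log w_1}.
\]
This is not $w_1\log w_1/\lambda_0^{17/8}$: you dropped the factor $\log^2 w_1$ that comes from $S_{w_1}\asymp y/\log w_1$. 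The result is only $\exp(-c\lambda_0/\log\lambda_0)$, so your closing claim that ``the direct sum already suffices'' is false. The stopping time does not rescue the argument. It controls only the term $S_{p_i}/((p-1)\hat\Theta_{w_1,p_{i+1}})$, while the dominant term is the number of removed points, which you still bound by $y/p$. Saying this count is ``typically'' $y/(p\log p)$ is of no use for Azuma, which needs the bound for every value of $\a_{p_{i+1}}$.

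\textbf{The missing ingredient} is a deterministic upper-bound sieve estimate, valid for all earlier residues and every class. After a CRT shift, $\ccS_{p_i}(y)\cap(a\bmod p)$ (with $p=p_{i+1}\le y/2$) consists of the integers in an interval of length $y$, in a fixed progression modulo $p$, that are coprime to $\prod_{p'\le p_i}p'$. The argument of Lemma~\ref{lem:sieve-upper} then gives
\[
\big|\ccS_{p_i}(y)\cap(a\bmod p)\big|\ll\frac{y/p}{\log\min\{p_i,\,y/p\}},\qquad S_{p_i}\ll \frac{y}{\log p_i},
\]
for every choice of the residues. The factor $1/\log p_i$ cancels $\hat\Theta_{w_1,p}^{-1}\asymp\log p/\log w_1$. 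Hence $c_i\ll y/(p\log w_1)$ for $p\le\sqrt y$, there is a harmless variant with $\log(y/p)$ in the denominator for $\sqrt y<p\le y/2$, and $c_i\ll\log y/\log w_1$ for $p>y/2$. This saves $\log^2 w_1$ over your bound and gives
\[
\sum c_i^2\ll \frac{y^2}{w_1\log^3 w_1},\qquad \frac{t^2}{8\sum c_i^2}\gg\frac{w_1\log w_1}{\lambda_0^{17/8}}\asymp\lambda_0\log\lambda_0 ,
\]
which exceeds $3\lambda_0$ for large $x$. This is exactly the paper's argument, and no stopping time is needed.
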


\begin{proof}
   Let $p_0\defeq  w_1$ and let $p_1 < \ldots <p_m$
be the primes in $(w_1,w_2]$. We 
define random variables by
\[
X_{j}\defeq \hat\Theta_{w_1,p_j}^{-1}\,S_{p_j}\qquad(j=0,1,\ldots,m)
.\]
The sequence $X_0, X_1,\ldots,X_m$ satisfies the generalized martingale conditions of Lemma \ref{lem:Azuma-ineq} since
\begin{align}
\E^0_{p_j,p_{j+1}}(X_{j+1}|\cF^0_{p_j})&=\hat\Theta_{w_1,p_{j+1}}^{-1}
\E^0_{p_j,p_{j+1}}(S_{p_{j+1}}|\cF^0_{p_j})
\\&=X_j+\hat\Theta_{w_1,p_{j+1}}^{-1}\left(\frac{\big|\ccS_{p_j}(y) \cap (0\bmod{p_{j+1}}) \big|}{p_{j+1}-1}\right)\\& \le X_{j}+\frac{4y\log p_{j+1}}{p_{j+1}^{2}\log w_1}\qquad\(\hat\Theta_{w_1,p_{j+1}}^{-1}\le \frac{2\log p_{j+1}}{\log w_1}\)
\end{align} for large enough $x$. Note that $X_0=S_{w_1}\gg y/\log w_1$. Furthermore, we have \medskip
\begin{align}
|X_{j+1}-X_j|&=\hat\Theta_{w_1,p_j}^{-1}\,
\big|(1-(p_{j+1}-1)^{-1})^{-1}S_{p_{j+1}}-S_{p_j} \big|
\\&\ll \frac{\log p_j}{\log w_1}
({S_{p_{j+1}}}/{p_{j+1}}+S_{p_j} - S_{p_{j+1}}).
\end{align} 
If $p_{j+1}>y/2$, we have that $|X_{j+1}-X_j|\ll {(\log y)}/{\log w_1},$ since $S_{p_{j+1}}\le y$ and $S_{p_j}=S_{p_{j+1}}+\mathcal{O}(1)$.
 In the case $p_{j+1}\le y/2$, the same upper-bound sieve argument as in Lemma~\ref{lem:sieve-upper} shows that for any value of $\a_{p_{j+1}}\bmod{p_{j+1}}$, we have 
\begin{align}|X_{j+1}-X_j| &\ll \frac{\log p_j}{\log w_1}
\left({S_{p_{j+1}}}/{p_{j+1}}+\big|\ccS_{p_j}(y) \cap (\a_{p_{j+1}}\bmod{p_{j+1}})\big|\right)
\\&\ll \frac{\log p_j}{\log w_1}\left(\frac{y/p_{j+1}}{\min\{\log p_j,\log(y/p_{j+1})\}}\right).\end{align} 

Consequently, 
\alg{\sum_{j=0}^{m-1}|X_{j+1}-X_j|^{2}& \ll \frac{y^{2}}{\log^2 w_1}\sum_{w_1<p\le \sqrt{y}}\frac1{p^2}+\frac{y^2 \log^2 y}{\log^2 w_1}\sum_{\sqrt{y}<p\le y/2}\frac1{p^2 \log^2(y/p)}+y^{4/3}\log^2 y\\& \ll \frac{y^2}{w_1\log^3 w_1}.}

\medskip
Also, we know that 
$\sum_{p>w_1}{\log p}/{p^{2}}\ll 1/{w_1}$.
Thus, if $\alpha>0$ is a sufficiently small absolute constant and $x$ is sufficiently large,
then an application of Lemma \ref{lem:Azuma-ineq} shows that
\begin{align}
\label{eq:Pw3w}
\P_{w_1,w_2}^{\,0}\bigg(|X_m-X_0|\ge{ X_0/\lambda_0^{17/16}}\bigg)
&\le 2\exp\left\{-\frac{\alpha X_0^2\,\lambda_0\log^3 \lambda_0}{y^{2}}\right\}\\&\ll \exp\(-3\lambda_0\),
\end{align} where we use the fact that $t={X_0}/{\lambda^{17/16}_0}\ge {8y}/{(\lambda_0^{25/8}\log \lambda_0)}$ for sufficiently large $x$.

\noindent Note that
\[
\P_{w_1,w_2}^{\,0}\(\big|S_{w_2}-\hat\Theta_{w_1,w_2}\,S_{w_1}\big|\ge \hat\Theta_{w_1,w_2}\,S_{w_1}/\l_0^{17/16}\)
=\P_{w_1,w_2}^{\,0}\(\big|X_m-X_0\big|\ge X_0/\l_0^{17/16}\).
\]
In view of \eqref{eq:Pw3w}, this implies that
\[
\P_{w_1,w_2}^{\,0}\(\big|S_{w_2}-\hat\Theta_{w_1,w_2}\,S_{w_1}\big|\ge \hat\Theta_{w_1,w_2}\,S_{w_1}/\l_0^{17/16}\)
\ll \exp(-3\l_0)
\]
holds.
\end{proof}

\begin{lem}\label{f4}
 Let $w_1\defeq  \lambda_k^{25/8}$ and
$w_2\defeq  y^{4/3} $ where $ \l_k\le \l'$.
Conditional on $\cF_{w_1}$ satisfying $S_{w_1}\gg y/\log w_1$, we have
\[
\P_{w_1,w_2}\bigg(|S_{w_2}-\Theta_{w_1,w_2}\,S_{w_1}|
\ge {\Theta_{w_1,w_2}\,S_{w_1}}/\lambda_k^{17/16}\bigg)\ll \exp(-3\lambda_k).
\] 
\end{lem}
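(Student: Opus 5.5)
We follow the proof of Lemma \ref{f3}, now in the unrestricted setting. Here the residues $\a_p$ are uniform on all of $\Z/p\Z$, so the normalized sifted counts form a genuine martingale and the classical Azuma inequality (Lemma \ref{lem:Azuma}) applies directly. Let $p_0\defeq w_1=\l_k^{25/8}$ and let $p_1<\cdots<p_m$ be the primes in $(w_1,w_2]$. Define
\[
X_j\defeq \Theta_{w_1,p_j}^{-1}S_{p_j}\qquad(j=0,1,\ldots,m).
\]
Conditionally on $\cF_{p_j}$, each $n\in\ccS_{p_j}(y)$ survives sieving by $p_{j+1}$ with probability exactly $1-1/p_{j+1}$. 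Hence
\[
\E_{p_j,p_{j+1}}(S_{p_{j+1}}\mid \cF_{p_j})=(1-1/p_{j+1})S_{p_j},
\]
and therefore $\E(X_{j+1}\mid\cF_{p_j})=X_j$. So $(X_j)$ is a martingale with $X_0=S_{w_1}\gg y/\log w_1$. No drift terms $d_j$ appear, which is simpler than Lemma \ref{f3}.

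The main step is to bound the increments. Write
\[
|X_{j+1}-X_j|=\Theta_{w_1,p_j}^{-1}\,\bigl|(1-1/p_{j+1})^{-1}S_{p_{j+1}}-S_{p_j}\bigr|\ll \frac{\log p_j}{\log w_1}\Bigl(\frac{S_{p_{j+1}}}{p_{j+1}}+\bigl|\ccS_{p_j}(y)\cap(\a_{p_{j+1}}\bmod p_{j+1})\bigr|\Bigr),
\]
using Mertens' theorem for $\Theta_{w_1,p_j}^{-1}$. We split into two cases.
\begin{itemize}
\item If $p_{j+1}>y/2$, each residue class meets $[1,y]$ in $\O(1)$ points, so $|X_{j+1}-X_j|\ll (\log y)/\log w_1$.
\item If $p_{j+1}\le y/2$, the upper bound sieve (Lemma \ref{lem:sieve-upper}), applied uniformly in the residue class $\a_{p_{j+1}}$, gives $|X_{j+1}-X_j|\ll \frac{\log p_j}{\log w_1}\cdot\frac{y/p_{j+1}}{\min\{\log p_j,\log(y/p_{j+1})\}}$.
\end{itemize}
Summing the squares over three ranges, $p\le\sqrt y$, $\sqrt y<p\le y/2$ and $y/2<p\le y^{4/3}$, exactly as in Lemma \ref{f3}, gives
\[
\sum_j c_j^2\ll \frac{y^2}{w_1\log^3 w_1}.
\]
The contribution $y^{4/3}\log^2 y$ from large primes is negligible here, because $w_1\le \l'^{25/8}=y^{o(1)}$.

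Next apply Lemma \ref{lem:Azuma} with $\epsilon=X_0/\l_k^{17/16}$. Using $X_0\gg y/\log w_1$ and $w_1=\l_k^{25/8}$, the exponent is
\[
\frac{\epsilon^2}{2\sum_j c_j^2}\gg \frac{y^2}{\log^2 w_1\,\l_k^{17/8}}\cdot\frac{w_1\log^3 w_1}{y^2}=\l_k^{25/8-17/8}\log w_1=\l_k\log w_1\gg \l_k\log\l_k.
\]
For large $x$ this exceeds $3\l_k$, so the probability is $\ll\exp(-3\l_k)$.

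Finally, $|S_{w_2}-\Theta_{w_1,w_2}S_{w_1}|\ge \Theta_{w_1,w_2}S_{w_1}/\l_k^{17/16}$ holds if and only if $|X_m-X_0|\ge X_0/\l_k^{17/16}$, which completes the argument.

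The main obstacle is the uniform-in-residue-class increment bound for $p_{j+1}\le y/2$. Here one must check that Lemma \ref{lem:sieve-upper} applies to $\ccS_{p_j}(y)$ intersected with an arbitrary progression modulo $p_{j+1}$. This is legitimate because the set is a shifted interval sifted by all primes $\le p_j$ (via CRT, as in Lemma \ref{f1}), so the lemma can be applied with $q=p_{j+1}$ and $P$ the product of primes $\le p_j$. The case $\a_{p_{j+1}}\equiv$ shift, in which the progression is not coprime to $q$, needs separate but trivial handling.
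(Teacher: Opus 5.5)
Your proposal is correct and follows the paper's proof. The paper says only that the argument is identical to Lemma \ref{f3}, except that $X_j=\Theta_{w_1,p_j}^{-1}S_{p_j}$ is now a genuine martingale, so the classical Azuma inequality (Lemma \ref{lem:Azuma}) replaces the drifted version; your increment bounds, the estimate $\sum_j c_j^2\ll y^2/(w_1\log^3 w_1)$, and the resulting exponent $\gg\lambda_k\log\lambda_k$ are exactly what that argument gives, and your handling of the progression not coprime to $p_{j+1}$ (reducing to an interval of length $y/p_{j+1}$) fills in a detail the paper leaves implicit.
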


\begin{proof}
    The proof is identical to the previous one, except that the random variables $X_j$ now form a martingale sequence; consequently, we apply Lemma \ref{lem:Azuma}.
\end{proof}

\smallskip
\begin{lem}\label{f5}
 Let $\l=(\log x)^{\beta}$ for some constant $\beta\in (0,1)$. 
Also, let $w_1\in [\l/(\log_2 x)^{2/7},\l]$ and $w_2\defeq  y^{4/3} $.
Conditional on $\cF_{w_1}$ satisfying $S_{w_1}\gg y/\log w_1$, we have
\[
\P_{w_1,w_2}\bigg(|S_{w_2}-\Theta_{w_1,w_2}\,S_{w_1}|
\ge {\Theta_{w_1,w_2}\,S_{w_1}}/(\log_2 x)^{2/7}\bigg)\ll_{\beta} \exp(-3\l).
\] 
\end{lem}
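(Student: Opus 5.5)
We run the same martingale argument as in Lemma~\ref{f3}, now in the unrestricted setting of Lemma~\ref{f4}, and only the final exponent bookkeeping changes. Let $p_0\defeq w_1$ and let $p_1<\cdots<p_m$ be the primes in $(w_1,w_2]$. Put
\[
X_j\defeq \Theta_{w_1,p_j}^{-1}S_{p_j}.
\]
Since the $\a_p$ are uniform on all residues, $\E_{p_j,p_{j+1}}(X_{j+1}\mid\cF_{p_j})=X_j$. So $(X_j)$ is a genuine martingale and we may apply Lemma~\ref{lem:Azuma} directly; no drift terms $d_i$ are needed. We also have $X_0=S_{w_1}\gg y/\log w_1$ and $X_m=\Theta_{w_1,w_2}^{-1}S_{w_2}$. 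The event in the statement is therefore exactly $|X_m-X_0|\ge X_0/(\log_2 x)^{2/7}$.

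\emph{Increments.} We bound them as in Lemma~\ref{f3}, using $\Theta_{w_1,p}^{-1}\ll \log p/\log w_1$:
\[
|X_{j+1}-X_j|\ll \frac{\log p_j}{\log w_1}\Big(\frac{S_{p_{j+1}}}{p_{j+1}}+\big|\ccS_{p_j}(y)\cap(\a_{p_{j+1}}\bmod p_{j+1})\big|\Big).
\]
For $p_{j+1}\le y/2$, Lemma~\ref{lem:sieve-upper} (applied to $n\equiv\a_{p_{j+1}}$ modulo $p_{j+1}$, sifted by primes up to $p_j$) gives
\[
|X_{j+1}-X_j|\ll \frac{y}{p_{j+1}\log w_1}\cdot\frac{\log p_j}{\min\{\log p_j,\log(y/p_{j+1})\}}.
\]
For $p_{j+1}>y/2$ the increment is $\ll \log y/\log w_1$. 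Summing the squares exactly as before yields
\[
\sum_j c_j^2\ll \frac{y^2}{w_1\log^3 w_1}+y^{4/3}\log^2 y\ll \frac{y^2}{w_1\log^3 w_1}.
\]
The last step holds because $w_1\le \l\le \log x$ while $y=\l\log x$, so $y^{2/3}$ is far larger than $w_1\log^3 w_1\cdot\log^2 y$.

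\emph{Exponent bookkeeping.} This is where the argument differs from Lemma~\ref{f3}, and it is the step to check carefully. Lemma~\ref{lem:Azuma} with $\epsilon=X_0/(\log_2x)^{2/7}$ gives
\[
\P\ll \exp\Big(-\alpha\frac{X_0^2\,w_1\log^3 w_1}{y^2(\log_2 x)^{4/7}}\Big)\le \exp\Big(-\alpha'\frac{w_1\log w_1}{(\log_2x)^{4/7}}\Big),
\]
where we used $X_0^2\gg y^2/\log^2 w_1$. Now $w_1\ge \l/(\log_2x)^{2/7}$ and $\log w_1\ge (\beta+o(1))\log_2 x$, so
\[
\frac{w_1\log w_1}{(\log_2 x)^{4/7}}\gg_\beta \frac{\l\log_2 x}{(\log_2 x)^{6/7}}=\l(\log_2x)^{1/7},
\]
which exceeds $3\l/\alpha'$ for large $x$. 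This gives the bound $\ll_\beta\exp(-3\l)$. The exponents $2/7$ are chosen precisely so that the loss $(\log_2 x)^{6/7}$ from the lower bound on $w_1$ and from the deviation threshold is beaten by the $\log w_1\asymp_\beta\log_2x$ gain. The only real obstacle is ensuring that the implied constant in $S_{w_1}\gg y/\log w_1$ and the sieve constants depend only on $\beta$. We also need Lemma~\ref{lem:sieve-upper} to apply in the range $p_{j+1}\le y/2$, where $y/p_{j+1}\ge 2$ holds.
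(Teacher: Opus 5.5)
Your proposal is correct and follows essentially the same route as the paper: the martingale $X_j=\Theta_{w_1,p_j}^{-1}S_{p_j}$, the increment bounds carried over from Lemma~\ref{f3}, the variance bound $\sum_j c_j^2\ll y^2/(w_1\log^3 w_1)$, and the plain Azuma inequality (Lemma~\ref{lem:Azuma}). Your explicit check that the Azuma exponent is $\gg_\beta \lambda(\log_2 x)^{1/7}$ just spells out the final step that the paper leaves implicit.
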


\begin{proof}
   Let $p_0\defeq  w_1$ and let $p_1 < \ldots <p_m$
be the primes in $(w_1,w_2]$. We 
define random variables by
\[
X_{j}\defeq \Theta_{w_1,p_j}^{-1}\,S_{p_j}\qquad(j=0,1,\ldots,m)
.\]

\noindent The sequence $X_0, X_1,\ldots,X_m$ is a martingale since
\[
\E_{p_j,p_{j+1}}(X_{j+1}|\cF_{p_j})=\Theta_{w_1,p_{j+1}}^{-1}
\E_{p_j,p_{j+1}}(S_{p_{j+1}}|\cF_{p_j})
=\Theta_{w_1,p_{j+1}}^{-1}
\(1-p_{j+1}^{-1}\)S_{p_j}=X_j.
\]
Proceeding as before, we have 
\alg{\sum_{j=0}^{m-1}|X_{j+1}-X_j|^{2}\ll \frac{y^2}{w_1\log^3 w_1}.}
Applying Lemma~\ref{lem:Azuma} in conjunction with the bound $X_0\gg y/\log w_1$ yields
\begin{align}
\P_{w_1,w_2}\bigg(|X_m-X_0|\ge{ X_0/(\log_2 x)^{2/7}}\bigg)
&\le 2\exp\left\{-\frac{\alpha X_0^2\,w_1\log^3 w_1}{y^{2}(\log_2 x)^{4/7}}\right\}\\&\ll_{\beta} \exp\(-3\l\),
\end{align}  for sufficiently large $x$. As before, the argument is complete.
\end{proof}

\subsection{Sieving for Large Primes (Using Combinatorial Expansion)} \label{rapid3}The results presented here are slight modifications of those in \cite[Section 6]{ford2025}, adapted to incorporate the restriction $\a_p\neq 0$ for the relevant primes $p$.

\begin{lem}[Sieving for $w_2< p \leq w_3$]\label{lem:w4w5}
Let $w_2\defeq y^{4/3}$, 
$w_3\ge y^{4/3}$, and let 
$\vartheta=y^{-1/10}$.
Conditional on~$\cF^0_{w_2}$ satisfying $S_{w_2}\gg y/\log y$, we have 
\[
\P_{w_2,w_3}^{\,0}\Big(\big|S_{w_3}-\hat\Theta_{w_2,w_3}\,S_{w_2}\big| 
\ge \vartheta S_{w_2} \Big) \le \exp(-0.1\vartheta^{2}S_{w_2})
\]
\end{lem}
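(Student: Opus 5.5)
The plan is to exploit the fact that once $p>w_2=y^{4/3}>y$, each prime $p$ removes at most one element of $\ccS_{w_2}(y)\subset[1,y]$: the residue class $\a_p\bmod p$ meets $[1,y]$ in at most one integer. Conditionally on $\cF^0_{w_2}$ the set $\ccS_{w_2}(y)$ is fixed, of size $S_{w_2}$. Each $n\in\ccS_{w_2}(y)$ survives to $\ccS_{w_3}$ iff $\a_p\not\equiv n\pmod p$ for all $w_2<p\le w_3$. Since $1\le n\le y<p$, we have $n\not\equiv 0\bmod p$, so under $\P^0$ (with $\a_p$ uniform on $\{1,\dots,p-1\}$) the probability that $p$ kills $n$ is exactly $1/(p-1)$. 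Hence
\[
\E^0_{w_2,w_3}\big(S_{w_3}\mid\cF^0_{w_2}\big)=\hat\Theta_{w_2,w_3}\,S_{w_2}.
\]
This is where the factor $\hat\Theta$ rather than $\Theta$ enters; it is the only modification to the argument of \cite[Section 6]{ford2025}.

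For concentration, I will write $S_{w_3}=S_{w_2}-\sum_{w_2<p\le w_3}\xi_p$, where $\xi_p\in\{0,1\}$ indicates that $\a_p$ hits some element of $\ccS_{w_2}(y)$ not already removed by a smaller prime. The cleanest route is to use the exposure martingale $X_j=\hat\Theta_{w_2,p_j}^{-1}S_{p_j}$, as in Lemma~\ref{f3}. This is now a genuine martingale under $\P^0$, since $0\notin[1,y]$ causes no drift. Its increments are $O(1)$ for $p_j\le 2y^{4/3}$-type primes and of size $\ll S_{p_j}/p_j$ otherwise. This gives
\[
\sum_j c_j^2\ll \sum_{p>w_2}\bigl(\mathbb{1}+S_{w_2}/p\bigr)^2\cdot\frac{S_{w_2}}{p}\text{-weighted},
\]
which is not small enough in the crude form. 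So I would instead follow \cite{ford2025} more closely and use a multiplicative Chernoff bound on the sum of independent-like indicators. First I decompose the primes $(w_2,w_3]$ into dyadic blocks. Then I note that $\P(\xi_p=1\mid\text{past})\le S_{w_2}/(p-1)$, and that the total expected number of removals is $\mu=(1-\hat\Theta_{w_2,w_3})S_{w_2}$. Since $\hat\Theta_{w_2,w_3}\asymp\log w_2/\log w_3$, $\mu$ may be a constant fraction of $S_{w_2}$. The removals form a sum of Bernoulli variables with predictable conditional means summing deterministically (up to the martingale correction) to $\mu$. A Freedman/Bernstein-type martingale inequality, with variance proxy $\le\mu\le S_{w_2}$ and increments bounded by $1$, then gives deviation $\ge\vartheta S_{w_2}$ with probability at most $\exp\bigl(-\vartheta^2S_{w_2}^2/(2(S_{w_2}+\vartheta S_{w_2}))\bigr)\le\exp(-0.1\vartheta^2S_{w_2})$ for small $\vartheta=y^{-1/10}$.

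To make the conditional means deterministic, I would run the martingale on the normalized quantity $X_j=\hat\Theta_{w_2,p_j}^{-1}S_{p_j}$. Its increment at step $j$ is $\hat\Theta^{-1}_{w_2,p_{j+1}}\bigl(S_{p_j}/(p_{j+1}-1)-\xi_{p_{j+1}}\bigr)$, which has mean zero. The conditional variance is $\le \hat\Theta^{-2}S_{p_j}/p_{j+1}$ and the increment is bounded by $\hat\Theta^{-1}\ll\log w_3/\log y$. The main obstacle is that this normalization factor grows when $w_3$ is huge, which inflates both the variance sum and the increment bound. To handle it, I would bound $S_{p_j}\le \hat\Theta_{w_2,p_j}S_{w_2}(1+o(1))$ on the good event (by a stopping-time argument), so that the total variance is $\ll S_{w_2}\sum_p \hat\Theta_{w_2,p}^{-1}/p\ll S_{w_2}\hat\Theta_{w_2,w_3}^{-1}$. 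Deviation $\vartheta S_{w_2}$ for $S_{w_3}$ corresponds to deviation $\vartheta\hat\Theta^{-1}S_{w_2}$ for $X_m$, and the $\hat\Theta$ factors then cancel favorably, giving the exponent $\gg\vartheta^2S_{w_2}$. Checking that the constant is $0.1$, using $S_{w_2}\gg y/\log y$ so that $\vartheta S_{w_2}$ dominates the bounded-increment term, is the routine final step.
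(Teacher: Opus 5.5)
Your structural observations are exactly what the paper's proof consists of. The paper simply imports \cite[Lemma 6.1]{ford2025}, noting two things: $0\in\ccS_{w_3}$ is harmless because every $p>w_2$ exceeds $y$, so each $n\in\ccS_{w_2}(y)$ is a nonzero residue and is hit with probability $1/(p-1)$; and $p$ is replaced by $p-1$, i.e.\ $\Theta$ by $\hat\Theta$. The paper does not re-derive the concentration bound. Your Freedman/Bernstein argument on the normalized martingale is therefore a self-contained substitute, and it is the right kind of tool, since plain Azuma fails as you note.

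One step, however, does not work as written over the full range of the lemma. You control the predictable variance on the event $S_{p_j}\le(1+o(1))\hat\Theta_{w_2,p_j}S_{w_2}$ for all $j$. Near the end, $S_{p_j}$ behaves like a binomial with mean $\hat\Theta_{w_2,w_3}S_{w_2}$. So with $\epsilon=o(1)$ this event fails with probability about $\exp(-\epsilon^2\hat\Theta_{w_2,w_3}S_{w_2}/2)$. That is not $\le\exp(-0.1\vartheta^2S_{w_2})$ once $\vartheta^2\hat\Theta_{w_2,w_3}^{-1}\to\infty$, e.g.\ for $w_3$ a power of $x$ with $y=\lambda\log x$ and $\lambda$ small. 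This is harmless for $w_3=y^8$, the only case the paper uses, but the statement allows any $w_3\ge y^{4/3}$. Your earlier unnormalized Bernoulli attempt also fails as stated, because the compensator $\sum_j S_{p_j}/(p_{j+1}-1)$ is random; you do abandon it, though.

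The ``main obstacle'' you identify is not actually one, and no good event is needed. Rescale to the Doob martingale $Y_j\defeq\hat\Theta_{p_j,w_3}S_{p_j}=\E^0(S_{w_3}\mid\cF^0_{p_j})$, so that $Y_0=\hat\Theta_{w_2,w_3}S_{w_2}$ and $Y_m=S_{w_3}$.

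\begin{itemize}
\item \emph{Increments.} Write $b_j=\hat\Theta_{p_j,w_3}$. The increment is $b_{j+1}\big(S_{p_j}/(p_{j+1}-1)-\xi_{p_{j+1}}\big)$, where $\xi_{p}\in\{0,1\}$ records a hit, so it is bounded by $1$.
\item \emph{Predictable variance.} Using only the trivial bound $S_{p_j}\le S_{w_2}$, it is at most $S_{w_2}\sum_j b_{j+1}^2/(p_{j+1}-1)$. Since $b_{j+1}-b_j=b_{j+1}/(p_{j+1}-1)$, one has $b_{j+1}^2/(p_{j+1}-1)=\tfrac12(b_{j+1}^2-b_j^2)+\tfrac12(b_{j+1}-b_j)^2$. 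The sum therefore telescopes, and the variance is at most $(\tfrac12+O(1/w_2))S_{w_2}$, deterministically.
\item \emph{Conclusion.} Freedman's inequality with deviation $\vartheta S_{w_2}$ gives $\P^0\big(|S_{w_3}-\hat\Theta_{w_2,w_3}S_{w_2}|\ge\vartheta S_{w_2}\big)\le 2\exp\big(-\vartheta^2S_{w_2}/(1+\vartheta)\big)$. Since $\vartheta^2S_{w_2}\gg y^{4/5}/\log y$, the factor $2$ is absorbed and the constant $0.1$ follows.
\end{itemize}

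Note also that the bounded-increment term contributes only a factor $1+O(\vartheta)$ to the exponent. It is the variance term that dominates, not the other way around.
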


\begin{proof}
The proof requires only minor modifications to the argument in~\cite[Lemma 6.1]{ford2025}. First, note that the condition $0 \in \ccS_{w_3}$ presents no difficulties, since for every prime $p>w_2$ we have $p>y$. Secondly, the only difference in the proof is that each occurrence of the factor $p$ in the relevant products is replaced by $p-1$.
\end{proof}

By applying \eqref{31} with $k=0$ and Lemmas \ref{f3} and \ref{lem:w4w5} $\(\text{with } w_3=y^8\)$ together with the estimate \[\hat\Theta_{w_1,w}=\Theta_{w_1,w}\(1+\O\({1}/{\l_0^{3}}\)\)\] for all $w>w_1$, we obtain the following result.

\begin{coro}[Sieving for $w_1 < p \leq y^8$]\label{cor:w1w5}
Let $\l\le \l'$. Suppose that, for every prime $p\le y^8$, the residue class $\a_p \bmod p$ is chosen indpendently and uniformly from $\{1,\ldots,p-1\}$. Then,
with probability $1-\mathcal{O}(e^{-3\l_0})$, we have
\begin{equation}\label{corsieve}
S_{y^8}=y\,\Theta_{y^8}\(1+\mathcal{O}(1/\l_0^{21/20})\).
\end{equation} 
\end{coro}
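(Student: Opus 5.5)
The plan is to chain the three sieving stages: primes up to $w_1=\l_0^{25/8}$, then $(w_1,w_2]$ with $w_2=y^{4/3}$, then $(w_2,y^8]$. We track the relative errors at each stage, and use a union bound to collect the exceptional probabilities. Note that $\a_p$ ranges over $\{1,\ldots,p-1\}$, i.e.\ we work with $\cF^0$.

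\emph{Stage 1 (small primes).} Lemma~\ref{f1} with $k=0$ is purely deterministic: the fundamental lemma holds for every choice of residues $(\a_p)_{p\le w_1}$, in particular those avoiding $0$. So for every admissible $\cF^0_{w_1}$,
\[
S_{w_1}=y\,\Theta_{w_1}\bigl(1+\O(\l_0^{-21/20})\bigr).
\]
Mertens' theorem gives $y\Theta_{w_1}\gg y/\log w_1$, so the hypothesis of Lemma~\ref{f3} holds automatically.

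\emph{Stage 2 (medium primes).} Conditional on $\cF^0_{w_1}$, Lemma~\ref{f3} gives, outside an event of probability $\O(e^{-3\l_0})$,
\[
S_{w_2}=\hat\Theta_{w_1,w_2}S_{w_1}\bigl(1+\O(\l_0^{-17/16})\bigr).
\]
We then use
\[
\hat\Theta_{w_1,w}=\Theta_{w_1,w}\bigl(1+\O(\l_0^{-3})\bigr).
\]
This holds because $\log\bigl(1-\tfrac1{p-1}\bigr)-\log\bigl(1-\tfrac1p\bigr)=\O(p^{-2})$, and $\sum_{p>w_1}p^{-2}\ll 1/w_1\le \l_0^{-3}$. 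It yields
\[
S_{w_2}=y\,\Theta_{w_2}\bigl(1+\O(\l_0^{-21/20})\bigr),
\]
since $17/16>21/20$. In particular $S_{w_2}\gg y/\log y$ by Mertens.

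\emph{Stage 3 (large primes).} Conditional on such $\cF^0_{w_2}$, apply Lemma~\ref{lem:w4w5} with $w_3=y^8$ and $\vartheta=y^{-1/10}$. Outside probability $\exp(-0.1\,y^{-1/5}S_{w_2})$ we get
\[
S_{y^8}=\hat\Theta_{w_2,y^8}S_{w_2}+\O\bigl(y^{-1/10}S_{w_2}\bigr).
\]
Since $\hat\Theta_{w_2,y^8}\asymp 1$ (the ratio of logs is $6$), the error term is a relative error $\O(y^{-1/10})$. Replacing $\hat\Theta$ by $\Theta$ again and multiplying out the factors gives \eqref{corsieve}, provided $y^{-1/10}\ll \l_0^{-21/20}$.

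\emph{Main obstacle.} The delicate point is Stage 3 when $y=\l\log x$ is small, for instance $\l$ as small as $1/\sqrt{\log x}$. There are two things to check.
\begin{itemize}
\item We need $y^{-1/10}\le \l_0^{-21/20}$. Here $y\ge\sqrt{\log x}$ while $\l_0\le \l'=(\log x)^{o(1)}$, so $y^{-1/10}=(\log x)^{-1/20}$ is far smaller than any power of $\l_0$.
\item We need the exceptional probability $\exp(-0.1y^{-1/5}S_{w_2})$ to be $\ll e^{-3\l_0}$. We have $S_{w_2}\gg y/\log y$, so the exponent is $\gg y^{4/5}/\log y\ge (\log x)^{2/5-o(1)}$, which far exceeds $3\l_0$ because $\l_0\le \l'=\exp(\O(\sqrt{\log_2x\log_3x}))$.
\end{itemize}

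Finally, a union bound over the two exceptional events, each of size $\O(e^{-3\l_0})$, completes the argument. Averaging the conditional statements over $\cF^0_{w_1}$ and $\cF^0_{w_2}$ preserves this bound.
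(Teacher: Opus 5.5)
Your proposal is correct and takes the same route as the paper. The paper obtains Corollary~\ref{cor:w1w5} by chaining \eqref{31} with $k=0$, Lemma~\ref{f3}, and Lemma~\ref{lem:w4w5} with $w_3=y^8$, together with $\hat\Theta_{w_1,w}=\Theta_{w_1,w}(1+\O(\l_0^{-3}))$. Your write-up also correctly supplies the checks the paper leaves implicit: the fundamental-lemma stage is deterministic, so it is valid for residues avoiding $0$; $y^{-1/10}\ll \l_0^{-21/20}$; and the Stage~3 exceptional probability is far below $e^{-3\l_0}$.
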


Next, we require a variant of a result from \cite[Lemma 6.3]{ford2025} to handle sieving by large primes. For completeness, we include the proof here. We also note that the original published proof contained an error, which has been corrected in the arXiv version of \cite{ford2025}; our argument follows the corrected approach. 

\begin{lem}[Sieving for $w_3 < p \leq z$, I]\label{lem:w_3w_4}
Let $x\ge z\ge w_3\ge y^4$ and $\mathcal{P}$ be a set of primes in $(w_3,z]$ such that $\sum_{p\in \mathcal{P}}1/p \ge 1/10$. Let $\ccS\subset \ccS_{w_3}(y)$.
Conditional on $\cF^0_{w_3}$, we have
for all $0\le g\le |\ccS|$:
\[
\P_{\mathcal{P}}^{\,0}\(\Big|\ccS \setminus \bigcup_{p\in \mathcal{P}} (\a_p \bmod p)\Big|=g\)=
(1-\Theta)^{|\ccS|-g}\Theta^g \binom{|\ccS|}{g}(1+\mathcal{O}(y^3/w_3)),
\]
where
\[
\Theta \defeq  \prod_{p\in \ccP} (1-1/p).
\] 
\end{lem}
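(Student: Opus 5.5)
The plan is to compute the distribution of the removed set exactly as a sum of nonnegative terms, and then compare it term by term with the independent Bernoulli model whose answer is $(1-\Theta)^{|\ccS|-g}\Theta^g\binom{|\ccS|}{g}$.

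\emph{Step 1: the structure of a single large prime.} Every $p\in\ccP$ satisfies $p>w_3\ge y^4>y$. Hence the elements of $\ccS\subset[1,y]$ lie in distinct nonzero residue classes mod $p$. Under $\P^{\,0}_{\ccP}$, with $\a_p$ uniform on $\{1,\ldots,p-1\}$, the class $\a_p\bmod p$ therefore removes exactly one element $n\in\ccS$ with probability $1/(p-1)$ each, and removes nothing from $\ccS$ with probability $1-|\ccS|/(p-1)$. The condition $0\in\ccS$ plays no role here, as in Lemma~\ref{lem:w4w5}.

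By symmetry it suffices to fix a set $T\subset\ccS$ with $|T|=|\ccS|-g$ and to show
\[
\P^{\,0}_{\ccP}\big(\text{the removed set is exactly } T\big)=(1-\Theta)^{|T|}\Theta^{g}\big(1+\O(y^3/w_3)\big).
\]
Summing over the $\binom{|\ccS|}{g}$ choices of $T$ then gives the lemma.

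\emph{Step 2: exact expansion without cancellation.} I will not use inclusion--exclusion, since the alternating sum would destroy relative errors. Instead I decompose by the hitting pattern. The event that exactly $T$ is removed is the disjoint union, over families $(P_n)_{n\in T}$ of pairwise disjoint nonempty subsets of $\ccP$, of the events "$p$ removes $n$ for $p\in P_n$, and $p$ removes nothing from $\ccS$ for $p\notin\bigcup P_n$". Its probability is therefore
\[
\sum_{(P_n)\ \text{disjoint}}\ \prod_{p\in\cup P_n}\frac1{p-1}\prod_{p\in\ccP\setminus\cup P_n}\Big(1-\frac{|\ccS|}{p-1}\Big).
\]
In the independent model, where each $n\in\ccS$ is removed by $p$ independently with probability $1/p$, the target probability $\Theta^g\prod_{n\in T}(1-\prod_p(1-1/p))$ expands as the analogous sum over families $(P_n)$ of nonempty sets that are \emph{not} required to be disjoint. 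Each such term has weight
\[
\Theta^g\prod_{n}\Big[\prod_{p\in P_n}\frac1p\prod_{p\notin P_n}\Big(1-\frac1p\Big)\Big].
\]

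\emph{Step 3: term-by-term comparison.} For a disjoint family, I compare the two weights prime by prime.
\begin{itemize}
\item A prime $p$ in no $P_n$ contributes the ratio $(1-|\ccS|/(p-1))/(1-1/p)^{|\ccS|}=1+\O(y^2/p^2)$. Since $\sum_{p>w_3}y^2/p^2\ll y^2/w_3$, the product of these ratios is $1+\O(y^2/w_3)$.
\item A prime $p$ in exactly one $P_n$ contributes the ratio $\frac{p}{p-1}(1-1/p)^{-(|\ccS|-1)}=1+\O(y/p)$.
\end{itemize}
For the second kind of factor I absorb the error into the weights: replacing $1/p$ by $(1\pm Cy/p)/p$ changes $\prod_{p\in\ccP}(1-1/p)$ only by a factor $1+\O(y/w_3)$. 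Consequently each factor $1-\prod_p(1-1/p)=1-\Theta$ changes by a relative amount $\O(y/w_3)$; here I use $1-\Theta\gg1$, which follows from $\sum_{p\in\ccP}1/p\ge1/10$. Taking the product over $n\in T$, with $|T|\le y$, gives a relative error $\O(y^2/w_3)$.

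\emph{Step 4: the overlapping families (main obstacle).} It remains to show that, in the independent model, the families in which some prime belongs to two or more of the $P_n$ carry only a proportion $\O(y^3/w_3)$ of the total weight. The union bound runs over a pair $n\ne n'$ (at most $y^2$ choices) and a shared prime $p$, which costs an extra factor $1/p$ beyond the weight $1/p$ already paid. Summing over $p>w_3$ gives $\sum_p 1/p^2\ll 1/w_3$.

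The difficulty is that deleting $p$ from $P_{n'}$ may leave $P_{n'}$ empty, so the naive injection into disjoint families fails. I would handle this by conditioning on $n'$ instead. The probability that $n$ and $n'$ share a removing prime is at most $\sum_p p^{-2}\ll1/w_3$. This has to be compared with the probability $(1-\Theta)^2$ that both are removed, which is $\gg1$ because $\sum_{p\in\ccP}1/p\ge1/10$. So each of the at most $y^2$ pairs costs a relative $\O(1/w_3)$. I would make this rigorous by exploiting independence across the elements of $\ccS$ in the independent model. Combining Steps 3 and 4 then gives a total relative error $\O(y^3/w_3)$, which is the bound asserted in Lemma~\ref{lem:w_3w_4}.
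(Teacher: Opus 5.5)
Your proof is correct, but it evaluates the exact expansion differently from the paper.

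\textbf{Where the two proofs agree.} Your Step 2 is the decomposition the paper uses. Its events $E(\cT,\mathbf r)$ are your disjoint families $(P_n)_{n\in T}$ grouped by $r_n=|P_n|$. This gives the expression $V\binom{h}{r_1\cdots r_m}T_h$, where $V=\prod_{p\in\ccP}(1-\ell/(p-1))$ and $T_h$ is the $h$-th elementary symmetric sum of the numbers $1/(p-\ell-1)$.

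\textbf{How the paper finishes.} It computes directly. It shows $T_h=(-\log\Theta+\O(y^2/w_3))^h/h!$ for $h\le y^2$, and disposes of $h>y^2$ with a Rankin-type tail bound. The multinomial sum then collapses to $(e^{-\log\Theta+\O(y^2/w_3)}-1)^m=(\Theta^{-1}-1)^m(1+\O(y^3/w_3))$.

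\textbf{How you finish.} You compare each disjoint pattern with an independent Bernoulli model whose total is an explicit product. All that remains is to show that overlapping configurations of that model carry negligible relative weight. Your conditional-independence argument does this cleanly: given that exactly $T$ is removed, the sets $P_n$, $n\in T$, are independent, and each is nonempty with probability $1-\Theta\gg1$.

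\textbf{What each route buys.} Your route never truncates in $h$. So you do not need the paper's auxiliary inputs: $z\le x$ for the crude bound on $T_h$, and $y\ge\sqrt{\log x}$, $x\le e^{y^2}$ for the tail in $h$. It also gives the sharper relative error $\O(y^2/w_3)$, which you undersold. The paper's extra factor $y$ comes only from raising $1+\O(y^2/w_3)$ to the $m$-th power. The paper's route, in turn, is a short adaptation of the computation already carried out in \cite{ford2025}.

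\textbf{Two points to tighten when writing it up.} First, Step 4 is needed only for the lower bound; for the upper bound, simply drop the disjointness constraint. Second, Step 3 is cleanest with $q_p=1/(p-\ell)$. Then $q_p/(1-q_p)=1/(p-\ell-1)$, so the ratio of the true weight to the Bernoulli weight is the same constant $V/\prod_{p}(1-q_p)^{\ell}=1+\O(y^2/w_3)$ for every disjoint pattern. This avoids the $\pm Cy/p$ sandwich, and Step 4 applies verbatim with these $q_p$.
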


\begin{proof}
Put $\ell\defeq |\ccS|$,
and assume that $\ell\ge 1$
(the case $\ell= 0$ being trivial). Take $m\defeq \ell-g$, and let
$\cT$, $\mathbf{r}$, $E(\cT,\mathbf{r})$ and $h$ be defined as
in \cite[Lemma 6.3]{ford2025} with $|\cT|=m=\ell-g$. Therefore, as in \cite[Eq. 6.4]{ford2025} (with $p$ replaced by $p-1$), we have
\be\label{eq:Er2}
\P_{\mathcal{P}}^{\,0}(E(\cT,\mathbf{r}))
=\binom{h}{r_1\;r_2\;\cdots\;r_m}
\prod_{p\in \ccP}\bigg(1-\frac{\ell}{p-1}\bigg)
\sum_{\substack{p_1,\ldots,p_h\in\ccP\\p_1<\cdots<p_h}}
\prod_{j=1}^h\frac{1}{p_j-\ell-1}. 
\ee  Let $T_h$ be the sum over $p_1, \dots, p_h$ in \eqref{eq:Er2}. Summing over all vectors $\mathbf{r}$, we find that
$$
\mathbb{P}_{\mathcal{P}}^{\,0}(|\mathcal{S} \setminus \cup_{p \in \mathcal{P}} (\a_p \bmod p)| = \ell - m) = \sum_{\substack{T \subset \mathcal{S} \\ |T| = m}} \sum_{h}\sum_{r_1 + \dots + r_m = h} \binom{h}{r_1 r_2 \dots r_m} V{T_h}
$$
$$
= V \binom{\ell}{m} \sum_{\substack{r_1, \dots, r_m \ge 1 }} \frac{(r_1+\cdots+r_m)!}{r_1! \dots r_m!} T_{r_1+\cdots+r_m}
$$
where
$$
V \defeq  \prod_{p \in \mathcal{P}} \left( 1 - \frac{\ell}{p-1} \right)=\Theta^{\ell}\left(1+\mathcal{O}(y^2/w_3)\right).
$$ 
When $m=0$, the sum on the right side is interpreted to be 1. We have
$$
T_h = \frac{1}{h!} \left( \sum_{p \in \mathcal{P}} \frac{1}{p - \ell-1} +\O \left( \frac{h}{w_3} \right) \right)^h
$$
$$
= \frac{1}{h!} \left( \sum_{p \in \mathcal{P}} \frac{1}{p} + \O \left( \frac{h + \ell}{w_3} \right) \right)^h
$$
$$
= \frac{(-\log\Theta +\O(y^2/w_3))^h}{h!},
$$
provided that $h \le y^2$. For any $h$, we also have the crude upper bound
$$
T_h \le \frac{1}{h!} \left( \sum_{p \in \mathcal{P}} \frac{1}{p - \ell-1} \right)^h \le \frac{(2\log_2 x)^h}{h!}.
$$
Assuming that $m\ge 1$, let
\[
\alpha = \frac{y^2}{m \log_2 x}.
\]
Since $m\le 10y$, we have $\alpha \ge \frac{y}{10\log_2 x}\ge \frac{(\log x)^{1/2}}{10\log_2 x}$.
Thus,
\begin{align*}
\sum_{\substack{r_1,\ldots,r_m\ge 1 \\ h\defeq r_1+\cdots+r_m>y^2}} \frac{h! T_h}{r_1!\cdots r_m!} &\le
\sum_{r_1,\ldots,r_m\ge 0} \frac{(2\log_2 x)^{r_1+\cdots+r_m}}{r_1!\cdots r_m!} \alpha^{r_1+\cdots+r_m-y^2}\\
&=e^{2m\alpha \log_2 x-y^2 \log \alpha}=e^{2y^2-y^2 \log \alpha} < e^{- 2y^2}
\end{align*}
if $x$ is large enough.  It follows that
\begin{align*}
\sum_{\substack{r_1,\ldots,r_m\ge 1 \\ h\defeq r_1+\cdots+r_m}} \frac{h! T_h}{r_1!\cdots r_m!} &= \O(e^{-2y^2})+
\sum_{\substack{r_1,\ldots,r_m\ge 1}} \frac{(-\log\Theta+\O(y^2/w_3))^{r_1+\cdots+r_m}}{r_1!\cdots r_m!} \\
&= \O(e^{-2y^2})+\Big( e^{-\log\Theta+\O(y^2/w_3)} - 1 \Big)^m\\
&= \O(e^{-2y^2})+\big( 1 + \O(y^3/w_3) \big) \big( \Theta^{-1}-1 \big)^m\\
&=\big( 1 + \O(y^3/w_3) \big) \big( \Theta^{-1}-1 \big)^m,
\end{align*}
using in the last step that $(\Theta^{-1}-1)^m\ge 10^{-10 y}$ and $w_3\le x \le e^{y^2}$.
Bringing together all our bounds, we are done.
\end{proof}

\begin{coro}[Sieving for $w_3 < p \leq z$, II]\label{cor:binomSk}
Let $x^{1/2}\ge z^{1/2} \ge w_3 \ge y^4$. Assuming that $0\in \ccS_{w_3}$, we have
\[
\E_{w_3,z}^{\,0} \binom{S_z}{k}=\Theta_{w_3,z}^k\binom{S_{w_3}}{k}(1+\O(y^3/w_3)).
\] 
\end{coro}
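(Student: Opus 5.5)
We derive the corollary from Lemma~\ref{lem:w_3w_4} by summing over the possible values of $S_z$. Condition on $\cF^0_{w_3}$ with $0\in\ccS_{w_3}$. Apply Lemma~\ref{lem:w_3w_4} with $\ccS=\ccS_{w_3}(y)$ and $\ccP$ the set of primes in $(w_3,z]$. For these primes $p>w_3\ge y^4>y$, so the condition $\a_p\neq 0$ automatically keeps $0$ in the sifted set. Hence $S_z=|\ccS\setminus\bigcup_{p\in\ccP}(\a_p\bmod p)|$ under $\P^0_{w_3,z}$.

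The one hypothesis to check is $\sum_{p\in\ccP}1/p\ge 1/10$. This follows from $z\ge w_3^2$: by Mertens, $\sum_{w_3<p\le w_3^2}1/p=\log 2+o(1)>1/10$ for large $w_3$, and $w_3\ge y^4$ with $y\ge \sqrt{\log x}$ guarantees this. The lemma then gives, for every $0\le g\le \ell\defeq S_{w_3}$,
\[
\P^0_{w_3,z}(S_z=g)=\binom{\ell}{g}\Theta^g(1-\Theta)^{\ell-g}\bigl(1+\O(y^3/w_3)\bigr),\qquad \Theta=\Theta_{w_3,z}.
\]
In other words, the law of $S_z$ agrees with $\mathrm{Bin}(\ell,\Theta)$ up to a uniform multiplicative factor $1+\O(y^3/w_3)$.

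Since all summands are nonnegative, the uniform relative error passes through the expectation:
\[
\E^0_{w_3,z}\binom{S_z}{k}=\bigl(1+\O(y^3/w_3)\bigr)\sum_{g=k}^{\ell}\binom{g}{k}\binom{\ell}{g}\Theta^g(1-\Theta)^{\ell-g}.
\]
It remains to evaluate this sum, which is the $k$-th factorial moment of a binomial variable. Using $\binom{g}{k}\binom{\ell}{g}=\binom{\ell}{k}\binom{\ell-k}{g-k}$ and the binomial theorem, the sum equals $\binom{\ell}{k}\Theta^k\bigl(\Theta+(1-\Theta)\bigr)^{\ell-k}=\binom{\ell}{k}\Theta^k$. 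This gives the claim. When $k>\ell$ both sides vanish, and the case $\ell=0$ is trivial.

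The argument is essentially routine. The only point needing care is that Lemma~\ref{lem:w_3w_4} is stated for $\P^0$ on $\ccP$ and uses $\Theta$ with $p$ rather than $p-1$, so it must match $\Theta_{w_3,z}$ exactly. It does, because the lemma as stated already absorbs the $p$ versus $p-1$ discrepancy into its $\O(y^3/w_3)$ error.
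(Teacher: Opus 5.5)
Your proposal is correct and follows essentially the paper's argument: you apply Lemma~\ref{lem:w_3w_4} with $\ccS=\ccS_{w_3}(y)$, pass the uniform relative error $1+\O(y^3/w_3)$ through the nonnegative sum weighted by $\binom{g}{k}$, and evaluate the binomial factorial moment via $\binom{g}{k}\binom{\ell}{g}=\binom{\ell}{k}\binom{\ell-k}{g-k}$. Your explicit check that $z\ge w_3^2$ gives $\sum_{p\in\ccP}1/p\ge 1/10$ is a detail the paper leaves implicit.
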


\begin{proof}
Let $w\defeq w_3$ and $\Theta\defeq \Theta_{w,z}$.
By Lemma~\ref{lem:w_3w_4} with $\ccS\defeq \ccS_w\cap [1,y]$ and $\ccP$ the set of primes in $(w,z]$, we have
\begin{align*}
\E_{w,z}^{\,0}\binom{S_z}{k}&=(1+\O(y^3/w))\sum_{g=k}^{S_w}
(1-\Theta)^{S_w-g}\Theta^g \binom{S_w}{g}\binom{g}{k}\\
&=(1+\O(y^3/w))\Theta^k\binom{S_w}{k}\sum_{j=0}^{S_w-k}
(1-\Theta)^{S_w-k-j}\Theta^j \binom{S_w-k}{S_w-k-j}\\
&=(1+\O(y^3/w))\Theta^k\binom{S_w}{k}.\qedhere
\end{align*}
\end{proof}

\begin{lem}\label{Ebo}
    Uniformly for $x\ge z\ge \sqrt{x}$ and integer $K\ge 800 \l$, we have \[\E_{z} \binom{S_z}{K}\ll e^{-K}.\]
\end{lem}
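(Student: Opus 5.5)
Write $\E_z\binom{S_z}{K}=\sum_{\cH\subset[1,y],\,|\cH|=K}\P(\cH\subset\ccS_z)$ and bound each summand by the full product
\[
\P(\cH\subset\ccS_z)=\prod_{p\le z}\Big(1-\frac{|\cH\bmod p|}{p}\Big).
\]
Comparing this product with $\Theta_z^{K}$ prime by prime does not work: it gives $\E_z\binom{S_z}{K}\approx\binom{y}{K}\Theta_z^K$ times an average singular series, and averaging $\fS$ over large $K$ is exactly what we want to avoid. So I would not expand over $\cH$ at all. Instead I would use the sieve structure directly:
\[
\binom{S_z}{K}\le \frac{S_z^K}{K!}\le\Big(\frac{eS_z}{K}\Big)^K .
\]
If $S_z\le K/e^2$ almost surely, the bound $\binom{S_z}{K}\le e^{-K}$ is immediate. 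The issue is that $S_z$ is random, so the plan is to combine a deterministic upper bound with a large-deviation tail.

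\emph{Deterministic bound.} For every choice of residues $(\a_p)_{p\le w}$, Lemma~\ref{lem:sieve-upper} with $q=1$ gives $S_w\le 2e^{\gamma}y\,\Theta_{\sqrt y}\ll y/\log y$, taking $w=\sqrt y$ when $y$ is large. With $y=\l\log x$ and $\Theta_z\asymp e^{-\gamma}/\log z\asymp 1/\log x$ (since $z\ge\sqrt x$), we have $y\Theta_z\asymp\l$. Hence $K\ge 800\l$ means $K$ is a large multiple of the expected size $\E S_z\approx y\Theta_z\le 2\l$. The remaining task is to bound the moment $\E\binom{S_z}{K}$ when $K$ far exceeds the mean.

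\emph{Main step: large primes.} Take $w_3=y^4$ (assuming $y^4\le\sqrt z$; if $y$ is so small that this fails, then $S_z\le y$ and $\binom{S_z}{K}=0$ once $K>y$, while $K\le y$ forces $\l\ll1/\log x$, which is excluded). Condition on $\cF_{w_3}$ and apply the unrestricted analogue of Corollary~\ref{cor:binomSk}, i.e.\ the result of \cite[Section 6]{ford2025} without the $\a_p\ne0$ restriction:
\[
\E_{w_3,z}\binom{S_z}{K}=\Theta_{w_3,z}^K\binom{S_{w_3}}{K}\big(1+\O(y^3/w_3)\big).
\]
Then use $\binom{S_{w_3}}{K}\le (eS_{w_3}/K)^K$ together with the deterministic bound $S_{w_3}\le C y\,\Theta_{w_3}$, obtained from Lemma~\ref{lem:sieve-upper} for small primes up to $\sqrt y$ combined with the trivial fact that sieving only removes elements. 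This needs care, because the sieve upper bound only captures primes $\le\sqrt y$; the factor $\Theta_{\sqrt y,w_3}$ is bounded below by a constant ($\log\sqrt y/\log y^4=1/8$). So $S_{w_3}\le 16e^{\gamma}y\,\Theta_{w_3}$ deterministically, and therefore
\[
\E_z\binom{S_z}{K}\le 2\Big(\frac{16e^{\gamma+1}\,y\,\Theta_{z}}{K}\Big)^K\le 2\Big(\frac{16e^{\gamma+1}\cdot 2\l}{800\l}\Big)^K .
\]
Since $32e^{\gamma+1}\approx 155<800/e$, this is $\ll e^{-K}$.

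The main obstacle is verifying the constants. One must check that $y\Theta_z\le 2\l$ uniformly for $\sqrt x\le z\le x$ (Mertens gives $\Theta_z\le 2e^{-\gamma}/\log x\cdot(1+o(1))$), and that the upper sieve constant together with the loss from $\sqrt y$ to $y^4$ stays below $800/e$. This is presumably why the constant $800$ appears in the statement.
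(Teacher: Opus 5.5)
Your argument is correct and is essentially the paper's proof. Both condition on the residues of small primes, apply the unrestricted form of \cite[Corollary 6.4]{ford2025} to reduce to $\Theta_{w_3,z}^K\binom{S_{w_3}}{K}$, bound $S_{w_3}$ deterministically via the upper bound sieve, and finish with $\binom{n}{K}\le (en/K)^K$. The only differences are cosmetic: the paper conditions at $y^8$ and bounds $S_{y^8}\le 3y/\log y$ and $\Theta_{y^8,z}\le 20(\log y)/\log x$ separately to get $(60e\lambda/K)^K$, whereas you condition at $y^4$ and telescope $\Theta_{w_3,z}\Theta_{w_3}=\Theta_z$ (and your worry about $y^4\le\sqrt z$ failing is moot, since $y\le\log^2 x$ and $z\ge\sqrt x$).
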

\begin{proof}
    By the law of total expectation and an application of \cite[Corollary 6.4]{ford2025}, we find that 
\begin{align*}
\E_{z}\binom{S_{z}}{K} &= \E_{y^8}\left[\E_{z}\(\binom{S_{z}}{K}\,\bigg| \cF_{y^8}\)\right]=\E_{y^8}\(\Theta_{y^8,z}^K\binom{S_{y^8}}{K}(1+\O(y^{-5}))\).
\end{align*} The upper bound sieve (Lemma~\ref{lem:sieve-upper}) implies
the crude bound $S_{y^8} \le 3y/\log y$. Using the standard upper bound $\binom{n}{K}\le (en/K)^{K}$, this implies \[\binom{S_{y^8}}{K}\le \(\frac{3ey}{K\log y}\)^{K}.\] By the inequality $\Theta_{y^{8},z}\le 20(\log y)/\log x$, we deduce \[\E_{z}\binom{S_{z}}{K}\ll \(\frac{60ey}{K\log x}\)^K= \(\frac{60e\l}{K}\)^K<e^{-K}.\qedhere\]
\end{proof}

\begin{lem}\label{ford}
    Let $w_2\defeq y^{4/3}$ where $y\le z^{1/16}$ and $\ccP$ be the set of primes in $(w_2,z]$ such that $x^{1/16}\le z\le x^{3/5}$. Conditional on $\cF_{w_2}$ satisfying $S_{w_2}\gg y/\log y$, we have for all $0\le g\le \sqrt{S_{w_2}}$, \alg{\P_{\mathcal{P}}&\(\Big|\ccS_{w_2}(y) \setminus  \bigcup_{p\in \mathcal{P}} (\a_p \bmod p)\Big| =g\)\\[2ex]&=\frac{\(\Theta S_{w_2}\)^g}{g!}\exp\bigg[-\Theta S_{w_2}+\O\Big(\big(\Theta S_{w_2}+g\big)\big(\Theta+y^{-1/10}\big)+y^{-5}\Big)\bigg]+\O\(e^{-y^{3/5}}\),} where \[\Theta\defeq \prod_{p\in \ccP}\(1-1/{p}\).\]
\end{lem}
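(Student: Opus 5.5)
The plan is to start from the exact binomial formula of Lemma \ref{lem:w_3w_4}, but that lemma requires $w_3\ge y^4$, while here sieving begins at $w_2=y^{4/3}$. So we split $\ccP$ at $w_3\defeq y^8$, which satisfies $w_3\le z^{1/2}$ because $y\le z^{1/16}$. First, apply Lemma \ref{lem:w4w5} in its unrestricted form (the same proof with $p-1$ replaced by $p$, i.e.\ the original \cite[Lemma 6.1]{ford2025}) to the primes in $(w_2,w_3]$, with $\vartheta=y^{-1/10}$. Since $S_{w_2}\gg y/\log y$, outside an event of probability at most $\exp(-0.1 y^{-1/5}S_{w_2})\le \exp(-y^{3/5})$ we have
\[
S_{w_3}=\Theta_{w_2,w_3}S_{w_2}+\O(y^{-1/10}S_{w_2}),
\]
and the exceptional event contributes the $\O(e^{-y^{3/5}})$ term. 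Then condition on $\cF_{w_3}$ and apply the unrestricted version of Lemma \ref{lem:w_3w_4} (i.e.\ \cite[Lemma 6.3]{ford2025}) with $\ccS=\ccS_{w_3}(y)$ and the primes in $(w_3,z]$. The hypothesis $\sum 1/p\ge 1/10$ holds because $\log z/\log w_3\gg \log x/\log y$ is large. This gives a binomial law $\mathrm{Bin}(L,\Theta')$ with $L\defeq S_{w_3}$ and $\Theta'\defeq\Theta_{w_3,z}$, up to a factor $1+\O(y^3/w_3)=1+\O(y^{-5})$. Note that $\Theta=\Theta_{w_2,w_3}\Theta'$.

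Next I would convert the binomial to Poisson form. For $g\le\sqrt{S_{w_2}}\le L$ (note $L\asymp S_{w_2}$),
\[
\binom{L}{g}\Theta'^g(1-\Theta')^{L-g}=\frac{(L\Theta')^g}{g!}\prod_{i<g}\Bigl(1-\frac iL\Bigr)\exp\bigl((L-g)\log(1-\Theta')\bigr).
\]
The product is $\exp(\O(g^2/L))=\exp(\O(1))$. This is the place where the restriction $g\le\sqrt{S_{w_2}}$ enters, and I expect this step to be the main obstacle: the product must be absorbed into the stated error $\O\bigl(g(\Theta+y^{-1/10})\bigr)$, which is smaller than $\O(1)$. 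So I would need to refine the estimate. Rewrite the product as $\exp(-g^2/2L+\dots)$. When $g\le\Theta L$ it is $\O(g\Theta)$. When $g$ is large it is not small, and then I would check whether the intended error really allows it (for instance by comparing with how the lemma is used later, where only $g\le K$ bounded by powers of $\log_2 x$ matters, so that $g^2/L\ll g y^{-1/10}$). The plan is to prove the bound with $g^2/L\le g/\sqrt{L}\ll g y^{-1/10}$, which holds for all $g$ since $L\gg y/\log y$.

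Finally, the exponent. We have $(L-g)\log(1-\Theta')=-L\Theta'+\O(L\Theta'^2+g\Theta')$, and $\Theta'\asymp\Theta\log w_3/\log w_2$, so the error terms are $\O((\Theta S_{w_2}+g)\Theta)$. Substituting $L\Theta'=\Theta S_{w_2}(1+\O(y^{-1/10}))$ changes the exponent by $\O(\Theta S_{w_2}y^{-1/10})$. It also changes $(L\Theta')^g$ by $\exp(\O(gy^{-1/10}))$. Collecting these errors with the $y^{-5}$ factor gives the claimed formula. Integrating over $\cF_{w_2,w_3}$ on the good event, and adding the bad event, completes the argument.
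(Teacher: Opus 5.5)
Your proposal is correct and follows the paper's proof essentially step for step. You split at $y^8$, apply \cite[Lemma 6.1]{ford2025} on $(w_2,y^8]$ with $\vartheta=y^{-1/10}$ to isolate an exceptional set of probability $\O(e^{-y^{3/5}})$, then apply \cite[Lemma 6.3]{ford2025} on $(y^8,z]$ and convert the resulting binomial law to Poisson form.

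The step you flagged as the main obstacle is not really one, and the paper handles it the same way. Since $g\le\sqrt{S_{w_2}}$ and $L\asymp S_{w_2}$, each factor satisfies $1-j/L=1+\O(S_{w_2}^{-1/2})=1+\O(y^{-1/10})$, so $\prod_{j<g}(1-j/L)=(1+\O(y^{-1/10}))^g$. This is your bound $g^2/L\ll g\,y^{-1/10}$; note that it relies on $g\le\sqrt{S_{w_2}}$ and does not hold for all $g$.
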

\begin{proof}
    Put $v\defeq y^8$ and $\vartheta\defeq y^{-1/10}$. By \cite[Lemma 6.1]{ford2025},
\[
\P_{w_2,v}\( \left|S_v-\Theta_{w_2,v}S_{w_2}\right|\geq \vartheta S_{w_2}\)
\leq \exp\(-0.1\vartheta^2S_{w_2}\)\ll e^{-y^{3/5}},
\]
where we used $S_{w_2}\gg y/\log y$. Let
\[
\mathcal{G}:=\left\{\left|S_v-\Theta_{w_2,v}S_{w_2}\right|<\vartheta S_{w_2}\right\}.
\]
Fix a choice of $\cF_v$ that satisfies $\mathcal{G}$. We may then employ \cite[Lemma 6.3]{ford2025} with $\ccS=\ccS_v(y)$. Since $v=y^8\leq z^{1/2}$,
\[
\sum_{v<p\leq z}\frac{1}{p}
=\log\(\frac{\log z}{\log v}\)+o(1)\ge\log 2+o(1)>\frac{1}{10}.
\]
Moreover, the elements of $\mathcal S_v(y)$ are distinct modulo $p$ for $p>v$. Consequently,
\be\label{BFT1}
\P_{v,z}\( S_z=g\,|\,\cF_v\)
=\(1-\Theta_{v,z}\)^{S_v-g}\Theta_{v,z}^g\binom{S_v}{g}
\(1+\O\( y^{-5}\)\).
\ee
On $\mathscr G$,
\[
S_v=\Theta_{w_2,v}S_{w_2}\(1+\O\( y^{-1/10}\)\).
\] 
Since $\Theta=\Theta_{w_2,v}\Theta_{v,z}$ and $\Theta_{w_2,v}\asymp1$, we obtain
\be\label{BFT2}
\(1-\Theta_{v,z}\)^{S_v}
=\exp\(-\Theta S_{w_2}\(1+\O\(\Theta+y^{-1/10}\)\)\).
\ee
Furthermore, uniformly for $0\leq g\leq\sqrt{S_{w_2}}$,
\be\label{BFT3}
\begin{aligned}
\(1-\Theta_{v,z}\)^{-g}\Theta_{v,z}^g\binom{S_v}{g}
&=\frac{1}{g!}\(\frac{\Theta_{v,z}S_v}{1-\Theta_{v,z}}\)^g
\prod_{0\leq j<g}\(1-\frac{j}{S_v}\)\\
&=\frac{1}{g!}\(\Theta S_{w_2}\)^g
\(1+\O\(\Theta+y^{-1/10}\)\)^g,
\end{aligned}
\ee
where $g/S_v\ll S_{w_2}^{-1/2}\ll y^{-1/10}$. 
Combining \eqref{BFT1}, \eqref{BFT2}, and \eqref{BFT3}, we get that
\alg{
\P_{v,z}\( S_z=g\,|\,\cF_v\)
=\frac1{g!}&\exp\Big[-\Theta S_{w_2}\big(1+\O(\Theta+y^{-1/10})\big)\Big](\Theta S_{w_2})^g\\&\times\(1+\O(\Theta+y^{-1/10})\)^{g}\(1+\O(y^{-5})\),
}
uniformly over all choices of $\cF_v$ for which $\mathcal{G}$ holds. Therefore, 
\alg{
\P_{\ccP}\( S_z=g\,|\,\mathcal{G}\)&=\frac1{g!}\exp\Big[-\Theta S_{w_2}\big(1+\O(\Theta+y^{-1/10})\big)\Big](\Theta S_{w_2})^g\\&\times\(1+\O(\Theta+y^{-1/10})\)^{g}\(1+\O(y^{-5})\).
}
Finally, using
\[
\P_{w_2,v}\(\mathcal G^c\)\ll e^{-y^{3/5}},
\]
we may conclude that 
\alg{
\P_{\ccP}\(S_z=g\)&=\P_{w_2,z}\(S_z=g\,|\,\mathcal{G}\)\P_{w_2,v}\(\mathcal{G}\)+\O\(e^{-y^{3/5}}\)
\\&=\frac1{g!}\exp\Big[-\Theta S_{w_2}\big(1+\O(\Theta+y^{-1/10})\big)\Big](\Theta S_{w_2})^g\\&\times\(1+\O(\Theta+y^{-1/10})\)^{g}\(1+\O(y^{-5})\)+\O\(e^{-y^{3/5}}\)\\&
=\frac{\(\Theta S_{w_2}\)^g}{g!}\exp\bigg[-\Theta S_{w_2}+\O\Big(\big(\Theta S_{w_2}+g\big)\big(\Theta+y^{-1/10}\big)+y^{-5}\Big)\bigg]+\O\(e^{-y^{3/5}}\).
}
\end{proof}
\section{Endgame: Part-1 (Small $\l$)}\label{section1}

Our strategy in this section and Section \ref{section2} relies on a probabilistic interpretation of the gap counts. We first invoke Lemma \ref{probru} to relate the quantities $M_{\cA}$ and $N_{\cA}$ to the distribution of the random variables $S_{z(t)}$. We then deploy the random sieve estimates from Section \ref{sieve} to evaluate the corresponding probabilities.  We retain some of the assumptions from Section \ref{sieve}. That is, for sufficiently large $x$, we set $y\defeq \l \log x$, where $0<\l\le \log x$.  Recall from \eqref{lk} and \eqref{l0} that \[\lambda_k\defeq \max\{\lambda,k, k\log (k/\l), (\log_2 x)^{200}\},\] and \[\lambda_0\defeq \max\{\lambda, (\log_2 x)^{200}\}.\]  Here, we follow the aforementioned approach to prove Theorems \ref{thm:HLgaps} and \ref{thm:HLpoisson}. To begin, we assume for this section that $\cA$ satisfies $\mathbf{HL}\left[y,\exp\(\sqrt{\log_2 x\log_3 x}\)\right]$. 

\smallskip
\noindent We assume throughout this section that $\l_k\le \l'$, which implies the upper bound  $y\le \log^2 x$ for $x>e^e$. For any integer $k\in [0,\l']$, we define an integer parameter \be\label{LK}\mathcal{K}_{k}\defeq k^2+\flr{800\l_k}.\ee  We also define the parameter $x_k$ as
\be\label{xk}x_k\defeq x^{1-1/\l_k^{21/20}},\ee where the exponent $21/20$ arises from the sieving error discussed in Remark \ref{rem:constants}. If $\l\ge 1/\sqrt{\log x}$, then  \be\label{LK1}\cK_k\le \min\{(\l')^2+800\l',y/3\}\le \min\Big\{\frac1{6}\exp\(\sqrt{\log_2 x\log_3 x}\),y/3\Big\},\ee and $x_k\ge y^{100}$ for sufficiently large $x$ and any non-negative integer $k\le \l'$. The choices above also yield \be x_k y^{2\cK_k+2}\ll xe^{-\cK_k \log_2 x},\ee uniformly in the present range. so the final error term in Lemma \ref{probru} may be absorbed into the existing error terms below. We define the truncated counting functions \[M'_{\cA}(x;y)\defeq \big|\{ x_0<n\le x: n\in \cA\text{ and } [n+1,n+y]\cap \cA=\varnothing\}\big|,\] and \[N'_{\cA}(x;y,k)\defeq \big|\{x_k<n\le  x:|[n+1,n+y]\cap\cA|=k\}\big|.\] 

\medskip
\noindent By \eqref{r} and \eqref{Gall}, we trivially have the relations \[M_{\cA}(x;y)=M'_{\cA}(x;y)+\O\(x^{1-1/\l_0^{21/20}}\),\] and \[N_{\cA}(x;y,k)=N'_{\cA}(x;y,k)+\O\(x^{1-1/\l_k^{21/20}}\).\] To verify that the error terms are negligible, it will suffice to prove that \[ \l_k^2\,e^{\l}\,k!\,\l^{-k}\,\log x< x^{1/\l_k^{21/20}}.\]Using the crude bound $k!\le k^k$ and taking logarithms on both sides, it is clear (since $\l_k=(\log x)^{o(1)}$) that \[\log x\ge 4\l_k^{3}\ge \l_k^{21/20}(\l+2\log \l_k+k\log (k/\l)+\log_2 x).\]

\medskip
\subsection{Proof of Theorem \ref{thm:HLgaps}}\label{exp} We split the proof into two parts depending on the size of $\l$. 

\subsubsection{Case 1: $\l\ge 1/\sqrt{\log x}$}
As stated at the beginning of this section, we start by applying Lemma \ref{probru} (ii) with $x'=x_0$ and $K=\cK_0$ as above. Thus, we can write $M'_{\cA}(x;y)$ as \be\label{UK1}
M'_{\cA}(x;y)= \int_{x_0}^{x}\P\(0\in \ccS_{z(t)}\)\( \P_{z(t)}^{\,0}(S_{z(t)}=0) +\O\(
\E_{z(t)}^{\,0}\binom{S_{z(t)}}{\cK_0+1}\)\)\, \dd t+\O\(xe^{-\cK_0\log_2 x}\).
\ee 

We handle the expression in (\ref{UK1}) term by term. Let $w=y^8$, $z_1\defeq z(x_0)$ and $z_2\defeq z(x)$. Firstly, it is easy to observe that \be\P\(0\in \ccS_{z(t)}\)= \prod_{p\le z(t)}\(1-\frac1{p}\)\in \left[\Theta_{z_2},\Theta_{z_1}\right].\ee By Mertens' theorem, we find that \be\label{euler}\Theta_{z_1}=\frac1{\log x}\(1+\O\pfrac{1}{\l_0^{21/20}}\)\quad\text{and}\quad\Theta_{z_2}=\frac1{\log x}\(1+\O\pfrac{1}{x^{1/e^{\gamma}}}\),\ee where the error term in the first expression arises from approximating $1/\log x_0$ with $1/\log x$. Hence, we obtain \be\label{eupro}\P\(0\in \ccS_{z(t)}\)=\Theta_{z(t)}=\frac1{\log x}\(1+\O\pfrac{1}{\l_0^{21/20}}\)\quad\text{uniformly for }t\in [x_0,x].\ee 

\medskip
\noindent To bound the expectation appearing in the error term of \eqref{UK1}, we follow an identical argument to that of Lemma \ref{Ebo}. The upper bound sieve (Lemma~\ref{lem:sieve-upper}) implies
the crude bound $S_w \le 3y/\log y$. Corollary \ref{cor:binomSk}
and the bound $\Theta_{w,z_1} \le 20\,(\log y)/{\log x}$
imply that
\begin{align}\label{Ebound}
\E_{z(t)}^{\,0}\binom{S_{z(t)}}{\cK_0+1} &\le \E_{z_1}^{\,0}\binom{S_{z_1}}{\cK_0+1}\notag\\
&\ll \Theta_{w,z_1}^{\cK_0+1} \E_w^{\,0} \binom{S_w}{\cK_0+1}\notag\\
&\ll \(\Theta_{w,z_1} \frac{3ey}{\cK_0\log y}\)^{\cK_0+1} \notag\\
&\ll e^{-\cK_0} \ll e^{-4\l_0},
\end{align}
where we used \eqref{LK} in the last step.
Finally, we need to obtain an asymptotic for  $\P_{z(t)}^{\,0}(S_{z(t)}=0)$. To this end, we deduce the following pair of inequalities:
\be\label{fi}\P_{z_2}^{\,0}\(S_{z_2}=0\)\ge \P_{z(t)}^{\,0}\(S_{z(t)}=0\)\ge \P_{z_1}^{\,0}\(S_{z_1}=0\).\ee

\noindent We now proceed to the {calculation of $\P_{z_j}^{\,0}\(S_{z_j}=0\)$ for $j\in \{1,2\}.$} Let $\mathscr{C}$ be the event that the estimate in \eqref{corsieve} holds for $S_{w}$. By Corollary \ref{cor:w1w5}, we know that $\P^{0}_{z_j}(\mathscr{C}^{c})\ll e^{-3\l_0}$ holds uniformly for $j\in \{1,2\}$. Observe that 
\begin{align*}\P^{\,0}_{z_j}(S_{z_j}=0)&=\PR^{\,0}_{z_j}(S_{z_j}=0 \mid \mathscr{C})\cdot\P^{\,0}_{z_j}(\mathscr{C})+\P^{\,0}_{z_j}(S_{z_j}=0 \mid \mathscr{C}^{c})\cdot\P^{\,0}_{z_j}(\mathscr{C}^{c})\\&=\P^{\,0}_{z_j}(S_{z_j}=0 \mid \mathscr{C})+\O(e^{-3\l_0}).\end{align*} We take $\ccS=\ccS_{w}(y)$ and $\ccP$ as the set of primes in $(w,z_j]$. Thus, applying Lemma \ref{lem:w_3w_4}, we obtain 
\begin{align*} \P^{\,0}_{z_j}(S_{z_j}=0 \mid \mathscr{C})&=\exp\bigg[{y\cdot \Theta_{\,w}\Big(-\Theta_{w,z_j}+\O\(\Theta_{w,z_j}^{2}\)\Big)}\Big(1+\O\(1/\l_0^{21/20}\)\Big)\bigg]\\&=\exp\bigg[{-y\cdot \Theta_{\,z_j}\Big(1+\O\(1/\l_0^{21/20}\)\Big)\Big(1+\mathcal{O}\big((\log y)/\log x\big)\Big)}\bigg]\\&=\exp(-\lambda+\O\(\l/ \l_0^{21/20}\))= e^{-\lambda}\(1+\mathcal{O}\({\l}/{\l_0^{21/20}}\)\),\,\end{align*} where we used Mertens' theorem to bound the relative error $\Theta_{w,z_j}\ll (\log y)/\log x$, and the final equality follows from \eqref{euler} alongside the restriction $\l\le \l_0$.

Combining both these estimates with (\ref{fi}), we arrive at \be\label{fi2}\P^{\,0}_{z(t)}(S_{z(t)}=0)=e^{-\lambda}\(1+\mathcal{O}\({\l}/{\l_0^{21/20}}\)\)\quad\text{ for }t\in [x_0,x]. \ee Collecting estimates \eqref{eupro}, \eqref{Ebound}, \eqref{fi2} and substituting into \eqref{UK1},  we obtain that \[M'_{\cA}(x;y)=\frac{xe^{-\lambda}}{\log x}\(1+\mathcal{O}\pfrac{\l+1}{\l_0^{21/20}}\).\] 

\subsubsection{Case 2: $\l<1/\sqrt{\log x}$} In this bounded regime, the random sieving machinery developed in Section \ref{sieve} is not necessary. Instead, one can rely on a direct argument. We claim that \be \1{rrr}M_{\cA}(x;y)=\frac{x}{\log x}\(1+\O\pfrac{1}{\sqrt{\log x}}\).\ee This matches the asymptotic $xe^{-\l}/\log x$ since  $\l< 1/\sqrt{\log x}$ implies $e^{-\l}=1+\O(1/\sqrt{\log x})$. To prove the claim, we recall from \eqref{r} that \be\label{r1}M_{\cA}(x;y)=\big|\{n\le x: n\in \cA\}\big|-\big|\{n\le x: n\in \cA\text{ and }[n+1,n+y]\cap \cA\neq\varnothing\}\big|.\ee 
For the first term in \eqref{r1}, we can use \eqref{eq:HLA} with $\cH=\{0\}$ to obtain \be\label{r2}\big|\{n\le x: n\in \cA\}\big|=\frac{x}{\log x}+\O\(\frac{x}{\log^2 x}\).\ee
For the second term in \eqref{r1}, we can apply a union bound over all possible gap distances up to $y$. We have: \be\label{r3}\big|\{n\le x: n\in \cA\text{ and }[n+1,n+y]\cap \cA\neq\varnothing\}\big|\le \sum_{k\le y}\big|\{n\le x: n\in \cA\text{ and }n+k\in \cA\}\big|\ee 
To compute the sum, we again use \eqref{eq:HLA} with $\cH=\{0,k\}$ for $k\le y$. Consequently, we have \alg{\sum_{k\le y}\big|\{n\le x: n\in \cA\text{ and }n+k\in \cA\}\big|&= \sum_{k\le y}\(\int_{2}^{x}\frac{\fS(\{0,k\})}{\log^2 t}\dd t+\O\(\frac{x}{\log^3 x}\)\)\\&\ll\frac{x}{\log^2 x}\sum_{k\le \sqrt{\log x}}\fS(\{0,k\})+\O\(\frac{x}{\log^{5/2} x}\).} It is well-known that $\fS(\{0,k\})=0$ for odd $k$. For even $k$, \[\fS(\{0,k\})=\fS_2\prod_{\substack{p>2\\p\mid k}}\(\frac{p-1}{p-2}\),\] where $\fS_2$ is the twin-prime constant \[\fS_2\defeq 2\prod_{p>2}\(1-\frac1{(p-1)^{2}}\)=1.3203236\ldots\] Applying the estimate \cite[Eq. 12]{HLM}, we see that \[\sum_{k\le w}\fS(\{0,k\})={w}+\O(\log w).\] Thus, we get that \[\sum_{k\le \sqrt{\log x}}\big|\{n\le x: n\in \cA\text{ and }n+k\in \cA\}\big|\ll \frac{x}{\log^{3/2} x}.\] Combining this estimate with \eqref{r1}, \eqref{r2}, and \eqref{r3}, it is immediate that \eqref{rrr} holds.

\smallskip
\subsection{Proof of Theorem \ref{thm:HLpoisson}} Under the hypotheses of Theorem \ref{thm:HLpoisson}, we have 
\[\max\{k,k\log(k/\l)\}\le \min\left\{\l',\exp\(\frac1{\sqrt{7}}\sqrt{\log (\l \log x) \log_2 (\l \log x)}\)\right\}.\]
Turning our attention to $N'_{\cA}$, we apply Lemma \ref{probru} (i) with $x'=x_k$ and $K=\cK_k$ as in \eqref{LK} and \eqref{xk}. This implies that $N'_{\cA}(x;y,k)$ can be expressed as \be\label{UK12}
N'_{\cA}(x;y,k)= \int_{x_k}^{x}\P_{z(t)}(S_{z(t)}=k) +\O\((\cK_k+1)^k\,
\E_{z(t)}\binom{S_{z(t)}}{\cK_k+1}\)\, \dd t +\O\(xe^{-\cK_k\log_2 x}\).
\ee 

For $t\in [x_k,x]$, we apply Lemma \ref{Ebo} with $z=z(t)$ and $K=\cK_k+1$. Therefore, we conclude that 
\be \label{E2}(\cK_k+1)^k\,
\E_{z(t)}\binom{S_{z(t)}}{\cK_k+1}\ll (\cK_k+1)^ke^{-\cK_k}\ll e^{-3\l_k}. \ee Just as in \eqref{fi}, we are left to obtain an asymptotic for $\P_{z(t)}(S_{z(t)}=k)$ uniform for $t\in [x_k,x]$. Fix $t\in [x_k,x]$. By the same estimates used to derive \eqref{euler} and \eqref{eupro}, our current choice of $x_k$ yields \be\label{eupro1}\Theta_{z(t)}=\frac1{\log x}\(1+\O\pfrac1{\l_k^{21/20}}\).\ee Combining the estimate \eqref{31} with Lemma \ref{f4} results in the asymptotic \be\label{eq:asymp} S_{y^{4/3}}=y\,\Theta_{y^{4/3}}\(1+\O\(1/\l_k^{21/20}\)\),\ee with probability $1-\O\(e^{-3\l_k}\)$. 

\smallskip
   It follows that $k\le \sqrt{S_{y^{4/3}}}$ for large $x$, since $k\le \l_k=y^{o(1)}$ and $S_{y^{4/3}}\gg y/\log y$. An application of Lemma \ref{ford} ($z=z(t),\, g=k,\,\Theta=\Theta_{y^{4/3},z(t)}$), along with \eqref{eupro1} and \eqref{eq:asymp}, gives 
   \alg{
   \P_{z(t)}(S_{z(t)}=k)=\frac{\l^k}{k!}&\exp\left[-\l+\O\(\frac{\l+k}{\l_k^{21/20}}+(\l+k)\(\Theta+y^{-1/10}\)+y^{-5}\)\right]\\&+\O\(e^{-3\l_k}\),
   } where we used $e^{-y^{3/5}}\ll e^{-3\l_k}$. We may note that $\Theta\ll y^{-1/10}$ by Mertens' theorem. Turning to the error term inside the exponential, we have 
   \[\frac{\l+k}{\l_k^{21/20}}+(\l+k)\(\Theta+y^{-1/10}\)+y^{-5}\ll \l_k^{-1/20},
   \] since $\l+k\le 2\l_k$ and $\l_k=y^{o(1)}$. Consequently, \[\P_{z(t)}(S_{z(t)}=k)=\frac{e^{-\l}\l^k}{k!}\exp\(\O\(\l_k^{-1/20}\)\)+\O\(e^{-3\l_k}\).\] For $k\ge 1$, the bound $k!\le k^k$ implies 
   \[\frac{e^{-\l}\l^k}{k!}\ge \exp\(-\l-k\log (k/\l)\)\ge e^{-2\l_k},\] and the same bound holds for $k=0$ as well. We finally conclude that 
   \[\P_{z(t)}(S_{z(t)}=k)=\frac{e^{-\l}\l^k}{k!}\(1+\O\(\l_k^{-1/20}\)\),\] uniformly for $t\in [x_k,x]$.  Inserting this expression into \eqref{UK12} and integrating over $t\in [x_k,x]$, the estimate \eqref{E2} then implies that \[N'_{\cA}(x;y,k)=\frac{xe^{-\lambda}\l^{k}}{k!}\(1+\mathcal{O}\pfrac{1}{\l_k^{1/20}}\).\] This completes the proof of Theorem \ref{thm:HLpoisson}.\qed

\section{Endgame: Part-2 (Large $\l$)}\label{section2}
In this section, we prove Theorems~\ref{UK4} and  \ref{thm:HLasymp} using a combination of techniques from both the previous section and~\cite[Section 8]{ford2025}. To establish the necessary bounds, we first set up our premise for the proofs. Let $z_2\defeq z(x)$. For any $w<x$ and $h>1$, we set \be N_{\cA}(x,w;h)\defeq \big|\{w<n\le  x:[n+1,n+h]\cap \cA=\varnothing\}\big|.\ee Furthermore, for any $h'>h$, we split the quantity $N_{\cA}(x,w;h)$ into two parts as follows: \be\label{decomp} N_{\cA}(x,w;h)=N_{\cA}(x,w;h,h')+ N_{\cA}(x,w;h'),\ee where \[N_{\cA}(x,w;h,h')\defeq |\{w< n\le x: [n+1,n+h]\cap \cA=\varnothing\textrm{ and }[n+1,n+h']\cap \cA\neq\varnothing\}|.\]

\smallskip
First, we give a general upper bound for $N_{\cA}(x,w;h)$, which will be useful for proofs of the subsequent theorems. 

\begin{lem}\label{crux}
    Let $\v_0,\v_1\in (0,1)$ with $\v_0>\v_1$. Assume that \be\label{kappa} \cA\subset \N\text{ satisfies }{\mathbf{HL}}\left[\log^{1+\v_0} x,\log^{\v_0} x\right].\ee Furthermore, let $w=w(x)$ be any parameter satisfying $\log w \sim \log x$ and $w\le x\exp(-2\log^{\v_0} x)$ for sufficiently large $x$. We have \be \label{decompupper}N_{\cA}(x,w;\log^{1+\v_1} x)\le x\exp\(-\(1-o_{\v_0,\v_1}(1)\)f\(1+\frac1{\v_1}\)\log^{\v_1} x\).\ee
\end{lem}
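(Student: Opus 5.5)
We would prove Lemma \ref{crux} by the same Brun-sieve-plus-random-sieve route used in Section \ref{section1}. The difference is that the Poisson main term is now replaced by a deterministic extremal sieve bound, namely the lower bound sieve of Lemma \ref{lem:sieve-lower}. Put $h\defeq \log^{1+\v_1}x$. Apply Lemma \ref{lem:UK}(i) with $k=0$, $\cN=(w,x]\cap\Z$, $\Omega=[1,h]$ and an even truncation $K$; this gives an upper bound for $N_{\cA}(x,w;h)$. We take $K\asymp \log^{\v_0}x/\log_2 x$ or slightly smaller, so that $K\le \psi(x)/6$. We then insert \eqref{eq:HLA}, which is legitimate because $\cH\subset[1,h]\subset[0,\log^{1+\v_0}x]$ and $|\cH|\le K\le\log^{\v_0}x$. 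As in the proof of Lemma \ref{probru}(i), the result is
\[
N_{\cA}(x,w;h)\le \int_w^x\Big(\P_{z(t)}(S_{z(t)}=0)+\O\Big(\E_{z(t)}\tbinom{S_{z(t)}}{K+1}\Big)\Big)\dd t+\O\big(x\log^{-K}x+wh^{2K+2}\big),
\]
with $S$ now counting survivors in $[1,h]$. The hypothesis $w\le x\exp(-2\log^{\v_0}x)$ is exactly what makes $wh^{2K+2}$ negligible against $x\exp(-\log^{\v_1}x)$. Likewise $\log^{-K}x$ is negligible because $K\log_2 x\gg\log^{\v_0}x\gg\log^{\v_1}x$, using $\v_0>\v_1$. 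The replacement of singular series by $V_\cH(z(t))$ costs $t^{-0.54}$ times $h^{K+1}$, which is harmless.

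The main term comes from a deterministic sieve input. Set $z_0\defeq h^{1/(1+1/\v_1)-\delta}$ for a small $\delta>0$; then $\log z_0\approx \frac{\v_1}{1+\v_1}\log h\approx \v_1\log_2 x$ and $u=\log h/\log z_0\to 1+1/\v_1$ from above. For every choice of $(\a_p)_{p\le z_0}$, Lemma \ref{lem:sieve-lower} gives
\[
S_{z_0}\ge (1-o(1))e^{-\gamma}f(u)\,h/\log z_0 .
\]
This requires $u\ge 2+\alpha$. If $1+1/\v_1\le 2$ we only have $f=0$, and the claim is trivial modulo $e^{o(\log^{\v_1}x)}$ factors, so no sieve input is needed in that case. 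Next, condition on $\cF_{z_0}$ and handle the primes in $(z_0,z(t)]$. For $p\in(z_0,h^{4/3}]$ we apply the Azuma-type concentration of Lemma \ref{f5}, adapted to $w_1=z_0$, and for larger primes Lemma \ref{lem:w4w5}. These show that $S_{h^8}\ge (1-o(1))\Theta_{z_0,h^8}S_{z_0}$ except with probability $\exp(-c\,h/(z_0\log^{O(1)}))$. That exceptional probability is far smaller than $\exp(-\log^{\v_1}x)$ because $h/z_0$ is a power of $\log x$ exceeding $\log^{\v_1}x$; this is where the choice $u>1+1/\v_1$ enters. On the good event, Lemma \ref{lem:w_3w_4} with $g=0$ gives
\[
\P(S_{z(t)}=0\mid\cF_{h^8})\le (1+o(1))(1-\Theta_{h^8,z(t)})^{S_{h^8}}\le\exp\big(-(1-o(1))\Theta_{h^8,z(t)}S_{h^8}\big).
\]
Combining with Mertens' theorem, $\Theta_{h^8,z(t)}\Theta_{z_0,h^8}\cdot e^{-\gamma}/\log z_0\sim 1/\log t\sim 1/\log x$ since $\log w\sim\log x$. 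Hence $\P_{z(t)}(S_{z(t)}=0)\le \exp(-(1-o(1))f(u)\,h/\log x)$, and $h/\log x=\log^{\v_1}x$. Finally we let $\delta\to0$ and use the continuity of $f$.

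The error term $\E\binom{S}{K+1}$ should be handled as in Lemma \ref{Ebo}. One bounds it by $(60e\log^{\v_1}x/K)^{K+1}$, which is $\le e^{-K}$ once $K\ge 800\log^{\v_1}x$. This is compatible with $K\le\log^{\v_0}x/(6\log_2x)$ precisely because $\v_0>\v_1$. Integrating over $t\in(w,x]$ then yields \eqref{decompupper}.

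The main obstacle is the concentration step from $z_0$ to $h^{4/3}$. There the tail probability must beat $\exp(-f(u)\log^{\v_1}x)$, while the relative deviation must be $o(1)$ and the step must hold uniformly over the worst-case conditioning $\cF_{z_0}$. We would first try Lemma \ref{lem:Azuma} exactly as in Lemma \ref{f5}: the variance is $\ll h^2/(z_0\log^3 z_0)$ and $X_0\gg h/\log z_0$, which gives an exponent of order $z_0\log z_0\cdot o(1)^2$. Here $z_0=\log^{(1+\v_1)/(1+1/\v_1)-}x$. Whether this exceeds $\log^{\v_1}x$ depends delicately on the exponents, since $(1+\v_1)\v_1/(1+\v_1)=\v_1$ exactly. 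This is why $u$ must be taken slightly larger than $1+1/\v_1$ only in the limiting sense, with $z_0$ kept at least about $\log^{\v_1}x\cdot(\log_2x)^{C}$. The $o(1)$ in the exponent absorbs this adjustment.
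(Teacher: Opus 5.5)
Your architecture matches the paper's. You apply Lemma \ref{probru}(i) with $k=0$ and cutoff $w$, bound the binomial moment as in Lemma \ref{Ebo}, use a deterministic lower-bound sieve near level $\log^{\v_1}x$, get Azuma concentration up to $h^{4/3}$ as in Lemma \ref{f5}, and finish with a Poisson-type estimate for the remaining primes (the paper does this last step in one stroke with Lemma \ref{ford} at $g=0$). Your handling of $wh^{2K+2}$, $x\log^{-K}x$ and $\E\binom{S}{K+1}$ is fine.

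\textbf{The gap is in the concentration step, for the parameter you actually choose.} With $\delta>0$ fixed, $z_0=h^{\v_1/(1+\v_1)-\delta}=\log^{\v_1-(1+\v_1)\delta}x$. Let $\epsilon$ be the relative deviation. The Azuma exponent is then $\epsilon^2X_0^2/\sum_j c_j^2\asymp\epsilon^2(h/\log z_0)^2\cdot z_0\log^3 z_0/h^2=\epsilon^2 z_0\log z_0$. This does not involve $h$ at all, so the exceptional probability $\exp(-c\,h/(z_0\log^{O(1)}x))$ you claim is not what Lemma \ref{lem:Azuma} gives. A bound of that shape is what Lemma \ref{lem:w4w5} provides beyond $h^{4/3}$, where each prime removes at most one survivor; it is not available on $(z_0,h^{4/3}]$.

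Consequently the exceptional event is only controlled up to probability $\exp(-\log^{\v_1-(1+\v_1)\delta+o(1)}x)=\exp(-o(\log^{\v_1}x))$. This swamps the target $\exp(-f\log^{\v_1}x)$, and letting $\delta\to0$ at the end cannot rescue an estimate that is not established for any fixed $\delta$. There is also a warning sign. For fixed $\delta$, your intermediate bound $\exp(-(1-o(1))f(u)\log^{\v_1}x)$ with $u>1+1/\v_1$ would be strictly stronger than the lemma itself, since $f$ is increasing. This is precisely why Lemma \ref{f5} insists on $w_1\ge\lambda/(\log_2x)^{2/7}$.

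Your last paragraph computes the right exponent $\epsilon^2 z_0\log z_0$, but then contradicts your own construction. Keeping $z_0\gtrsim\log^{\v_1}x\,(\log_2x)^C$ makes $u$ slightly \emph{below} $1+1/\v_1$, not above it. The repair is to drop $\delta$ and take $z_0=\log^{\v_1}x$, so that $u=1+1/\v_1$ exactly. This exceeds $2$ because $\v_1<1$, so your ``trivial case'' never arises. Then:
\begin{itemize}
\item Lemma \ref{lem:sieve-lower} gives $S_{z_0}\ge(1/\v_1-o(1))f(1+1/\v_1)e^{-\gamma}h/\log_2x$ for every choice of residues.
\item Lemma \ref{f5} applies verbatim with $\beta=\v_1$, $w_1=\log^{\v_1}x$ and $\epsilon=(\log_2x)^{-2/7}$. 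Its exponent is $\asymp(\log_2x)^{-4/7}\log^{\v_1}x\cdot\log_2x$, which beats $3\log^{\v_1}x$ exactly because of the factor $\log z_0\asymp\log_2x$.
\item Mertens' theorem and Lemma \ref{ford} with $g=0$ then produce $f(1+1/\v_1)$ directly, with no limiting argument.
\end{itemize}
This is the paper's proof.

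A minor point: Lemmas \ref{lem:w4w5} and \ref{lem:w_3w_4} are stated for the restricted model $\P^0$. For $N_\cA$ you need the unrestricted versions from \cite{ford2025}, as used in Lemma \ref{ford}.
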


\begin{proof}
    We set $y_1\defeq \log^{1+\v_1}x$ for simplicity. We begin by applying Lemma \ref{probru} (i) with $y_1$ in place of $y$, lower cutoff $x'=w$, $k=0$, and $K=\flr{800\,\log^{\v_1}x}$. Under this substitution, the random variable $S_{z(t)}$ counts the elements in $[1,y_1]$. Verifying the hypotheses of Lemma \ref{probru}, we see that $x'\ge (y_1)^{100}$ and \[K\le \min\Big\{(1/6)\log^{\v_0} x,y_1/3\Big\}\]for sufficiently large $x$ depending only on $\v_0$ and $\v_1$. Therefore, we obtain \be \label{U1} N_{\cA}(x,x';y_1)=\int_{x'}^{x}\P_{z(t)}\(S_{z(t)}=0\)\,\dd t+ \O\(\int_{x'}^{x}\,\E_{z(t)}\binom{S_{z(t)}}{K+1}\,\dd t\)\,+\O\(xe^{-K\log_2 x}\).\ee  With our setup in place, we can estimate the quantities in (\ref{U1}). Arguing as in Lemma \ref{Ebo}, by \cite[Corollary 6.4]{ford2025} and the crude bounds $S_{y_1^8}\le 3y_1/\log y_1$, $\Theta_{y_1^8,z_1}\le 20\,{(\log y_1)}/{\log x}$, we deduce \be\label{expect}\E_{z(t)}\binom{S_{z(t)}}{K+1}\ll e^{-K}\ll \exp\(-4\log^{\v_1} x\).\ee 
    It remains to bound $\P_{z(t)}\(S_{z(t)}=0\)$ from above. It suffices to prove an upper bound for $\P_{z_2}\(S_{z_2}=0\)$ since $\P_{z_2}\(S_{z_2}=0\)\ge \P_{z(t)}\(S_{z(t)}=0\)$ for $t\in [x',x]$. 
 Applying Lemma \ref{lem:sieve-lower} ($y=y_1$ and $z=\log^{\v_1} x$), we find that \[S_{\log^{\v_1} x}\ge (1/\v_1-o(1))f\(1+1/\v_1\)e^{-\gamma}y_1/\log_2 x.\]

\smallskip
 Combining this estimate with Lemma \ref{f5} ($w_1=\log^{\v_1} x$ and $\beta=\v_1$) yields the lower bound: \be\label{lower1}S_{y_1^{4/3}}\ge (3/4-o(1))f\(1+1/\v_1\)e^{-\gamma}y_1/\log y_1\ee with probability $1-\O\(\exp\(-3\log^{\v_1} x\)\)$. This implies that \alg{\P_{z_2}\(S_{z_2}=0\)&\le \P_{z_2}\(S_{z_2}=0\mid \eqref{lower1}\text{ holds}\)+\P\(\eqref{lower1}\text{ fails}\)\\&\le\P_{z_2}\(S_{z_2}=0\mid \eqref{lower1}\text{ holds}\)+\O\(\exp\(-3\log^{\v_1} x\)\).}Moreover, an application of Lemma \ref{ford} ($z=z_2,\,g=0,\,\Theta=\Theta_{y_1^{4/3},z_2}$) tells us that \alg{\P_{z_2}\(S_{z_2}=0\)&\le \exp\bigg[(1 - o(1))\, f\(1 + \frac{1}{\v_1}\)
    \, e^{-\gamma} \frac{3\,y_1}{4\log y_1}\cdot \(-\Theta_{y_1^{4/3},z_2}+\O\(\Theta_{y_1^{4/3},z_2}y_1^{-1/10}\)\)\bigg]\\&\le \exp\(-\(1-o(1)\)f\(1+\frac1{\v_1}\)\log^{\v_1} x\).} Employing this estimate along with \eqref{U1} and \eqref{expect} leads to the bound \[N_{\cA}(x,x';y_1)\le x\exp\(-\(1-o(1)\)f\(1+\frac1{\v_1}\)\log^{\v_1} x\).\qedhere\] \end{proof}

\smallskip
Next, we develop the general setup for the lower bounds.
\begin{lem}\label{lo}
    We have $M_{\cA}(x;h)\ge N_{\cA}(x,w;h,h')/h'$ for $w\ge \min \cA$.
\end{lem}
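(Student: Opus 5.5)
The bound will come from a many-to-one map. Each $n$ counted by $N_{\cA}(x,w;h,h')$ is sent to an element of $\cA$ that begins a gap of length at least $h$. Fix such an $n$, so $w<n\le x$, $[n+1,n+h]\cap\cA=\varnothing$ and $[n+1,n+h']\cap\cA\neq\varnothing$. Let $m$ be the largest element of $\cA$ with $m\le n$. It exists because $n>w\ge\min\cA$, and it satisfies $m\le n\le x$. By maximality of $m$ there is no element of $\cA$ in $(m,n]$, and by hypothesis none in $[n+1,n+h]$. Hence $[m+1,m+h]\subset(m,n+h]$ misses $\cA$, so $m$ is counted by $M_{\cA}(x;h)$. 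Write $m=\phi(n)$.

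Next I bound the fibres of $\phi$. Let $q$ be the least element of $\cA$ in $[n+1,n+h']$. Since $(m,n]$ contains no element of $\cA$, $q$ is also the least element of $\cA$ exceeding $m$, so $q$ is the successor of $m$ in $\cA$ and is determined by $m$ alone. The condition $n+1\le q\le n+h'$ then forces $n\in[q-h',q-1]$, an interval containing at most $\lceil h'\rceil$ integers. So every fibre $\phi^{-1}(m)$ has at most $h'$ elements when $h'$ is an integer, which is the case of interest. For non-integer $h'$ the same argument gives at most $\lfloor h'\rfloor+1$, and I would either note that the intended usage has integral $h'$ or replace $h'$ by $\lceil h'\rceil$, at negligible cost.

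Summing over the image of $\phi$ then gives
\[
N_{\cA}(x,w;h,h')\le h'\cdot\big|\phi\big(\{\text{counted }n\}\big)\big|\le h'\,M_{\cA}(x;h),
\]
which is the claim. The only step needing care is the fibre bound: one must see that the successor $q$ depends only on $m$ and not on $n$. Everything else is bookkeeping.
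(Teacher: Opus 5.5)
Your proof is correct and is essentially the paper's argument: the paper also sends each counted $n$ to the largest element $a_j\le n$ of $\cA$ and controls the fibre through the successor $a_{j+1}$, obtaining $\max(a_j,a_{j+1}-\lfloor h'\rfloor)\le n\le a_{j+1}-\lfloor h\rfloor-1$ and hence at most $\lfloor h'\rfloor-\lfloor h\rfloor$ values of $n$ per gap. The one slip is your fibre count. Since $n$ and $q$ are integers, $q-h'\le n\le q-1$ is equivalent to $q-\lfloor h'\rfloor\le n\le q-1$, which has exactly $\lfloor h'\rfloor\le h'$ solutions, not $\lfloor h'\rfloor+1$. So the lemma holds as stated for every real $h'$ and no integrality hedge is needed. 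This matters because in the paper's applications $h'=\log^{1+\varepsilon_2}x$ is not an integer, so neither of your suggested escape routes would have been available.
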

\begin{proof}
    Let $\cA=(a_n)$.
 We begin by observing that $N_{\cA}(x,w;h,h')$ is the number of $w< n\le x$ for which the interval $[n+1,n+\lfloor h\rfloor ]$ contains no elements from $\cA$ while the interval $[n+1,n+\lfloor h'\rfloor]$ contains at least one such element. The condition on $n$ holds if and only if there is a $j$ for which $a_j\le n$ and $a_{j+1}\in [ n+\flr{h}+1,n+\flr{h'}] $, which can occur only if $a_{j+1}-a_j\ge \flr{ h}+1$. Hence, in this case, we have \[\max(a_j,a_{j+1}-\lfloor h'\rfloor)\le n \le a_{j+1}-\flr{h}-1,\] and there are at most $\lfloor h'\rfloor-\flr{h}$ such $n$'s for this $a_j$. Therefore, \alg{N_{\cA}(x,w;h,h')&\le h'\sum_{\substack{ a_j\le x\\ a_{j+1}-a_j\ge \lfloor h\rfloor+1}}1\le h'\,M_{\cA}(x;h).} We arrive at the lower bound \[ M_{\cA}(x;h)\ge {N_{\cA}(x,w;h,h')}/{h'}.\qedhere \] 
\end{proof}

\medskip
With our general bounds established, we now fix the parameters for the proofs of our main theorems. As in Section \ref{section1}, we write $y\defeq \l \log x$, noting that $y\ll \log^2 x$. We assume for the remainder of this section that $\l\ge (\log_2 x)^{200}$ for sufficiently large $x$. Recalling \eqref{LK}, this implies that $\cK_0=\flr{800\l}$. For sufficiently large $x$, we introduce the  lower cutoff $x'\ge y^{100}$, and we choose $y'$ such that $y'>y$.

\smallskip
Dropping the parameter $k$ from \eqref{Gall}, we write $N_{\cA}(x;y)\defeq N_{\cA}(x;y,0)$. Comparing this to  the quantity $N_{\cA}(x,w;h)$ with $w=x'$ and $h=y$, we trivially obtain 
\be\label{imp} N_{\cA}(x;y)=N_{\cA}(x,x';y)+\O\(x'\).\ee We are now in a position to prove Theorems \ref{UK4} and \ref{thm:HLasymp}.
\smallskip
\subsection{Proof of Theorem \ref{UK4}}\label{rapid1}
For the following proof, we assume $c,c'>0$ are fixed parameters as given in Theorem \ref{UK4}. Consequently, all implied constants are permitted to depend on $c$ and $c'$. Assume that \[\cA\text{ satisfies }\mathbf{HL}\left[\log^{1+c'} x,\log^{c'} x\right].\] Let $\l\defeq \log^c x$ and choose $\v_2\defeq(c+c')/2$. We set $x'\defeq  x\exp\(-2\log^{c'} x\)$ and $y'\defeq \log^{1+\v_2} x$. Put $z_1\defeq z(x')$.
\subsubsection{Upper bound for $N_{\cA}$ and $M_{\cA}$:}  Taking $\v_0=c'$ and $\v_1=c$ in Lemma \ref{crux}, it follows from \eqref{imp} that \[M_{\cA}(x;y)\le N_{\cA}(x;y)=  N_{\cA}(x,x';y)+\O(x')\le x\exp\(-\(1-o(1)\)f\(1+\frac1{c}\)\l\).\] This proves the upper bound in Theorem \ref{UK4}.\qed
 
 \subsubsection{Lower bound for $N_{\cA}$ and $M_{\cA}$:}\label{lower2} We deduce from Lemma \ref{lo} and \eqref{decomp} with $w=x'$, $h=y$, and $h'=y'$ that \begin{align*} M_{\cA}(x;y) \ge\frac{N_{\cA}(x,x';y)-N_{\cA}(x,x';y')}{\log ^{1+\v_2} x}.\end{align*} Therefore, we get that \[M_{\cA}(x;y)\ge \frac{N_{\cA}(x,x';y)}{\log^{1+\v_2} x}+\O\bigg(x\exp\(-\(1-o(1)\)f\(1+\frac1{\v_2}\)\log^{\v_2} x\)\bigg),\] which again follows from Lemma \ref{crux} with $\v_0=c',\v_1=\v_2$ and $w=x'$. Thus, it suffices to show a lower bound for $N_{\cA}(x,x';y)$. By another application of Lemma \ref{probru} (i) with $k=0$ and $K=\cK_0$ (we can check that $\cK_0\le \frac1{6}\log^{c'} x$ for large $x$) , we find that \be \label{U11.1} N_{\cA}(x,x';y)=\int_{x'}^{x}\P_{z(t)}\(S_{z(t)}=0\)\,\dd t+ \O\(\int_{x'}^{x}\,\E_{z(t)}\binom{S_{z(t)}}{\cK_0+1}\,\dd t\)\,+\O\(xe^{-\cK_0\log_2 x}\),\ee where the additional term $x'y^{2K_0+2}$ from Lemma \ref{probru} is absorbed into the first error term.  As before, using Lemma \ref{Ebo} with $z=z(t)$ and $K=\cK_0+1$ yields\be\label{U7}\E_{z(t)}\binom{S_{z(t)}}{\cK_0+1}\ll e^{-4\l}.\ee  It remains to bound $\P_{z_1}\(S_{z_1}=0\)$ since $\P_{z_1}\(S_{z_1}=0\)\le \P_{z(t)}\(S_{z(t)}=0\)$ for $t\in[x',x]$. Let $\v_3>0$ be very small and $ w\defeq  \l/{(\log_2 x)^{2/7}}$. For brevity, we denote 
 \[
 L_{\v_3}\defeq\frac{c}{c-\v_3}\(f^{+}\left(\frac{c + 1}{c - \v_3}\right) +\v_3\).
 \] 
 By the definition of $f^{+}$, 
we find that there exists a choice of residues $\a=(\a_p\bmod{p}: p \le w)$ such that
\[
S_w \le S_{(\log x)^{c-\v_3}}
    \le 
        L_{\v_3}
      e^{-\gamma} \frac{y}{\log w}
\]
for all sufficiently large $x$ depending on $\v_3$. Fix $\cF_w=\a$ so that $S_w$ is fixed. Its probability is exactly 
\be\label{ir1}
\P_{w}(\cF_w=\a)= \prod_{p\le w}\frac1{p}=e^{-o(\lambda)}.
\ee
 It is clear that \be\label{cond}\P_{z_1}\(S_{z_1}=0\)\ge \P_{z_1}\(S_{z_1}=0\,|\, \cF_w=\a\) \cdot \P_{w}\(\cF_w=\a\).\ee 
 By Lemma \ref{lem:sieve-lower} ($z=w$), we know that $S_{w}\gg_c y/\log w$. Employing Lemma \ref{f5} ($w_1=w$ and $\beta=c$), we obtain the bound \[\frac{y}{\log y}\ll_{c}S_{y^{4/3}}\le \, 
    \(L_{\v_3}+o_{\v_3}(1)\)
    \frac{3e^{-\gamma}}{4} \frac{y}{\log y}
\] with probability $1-\O\(e^{-3\lambda}\)$. Invoking Lemma \ref{ford} ($z=z_1,\,g=0,\,\Theta=\Theta_{y^{4/3},z_1}$) yields \alg{\P_{z_1}\(S_{z_1}=0\mid \cF_w=\a\)&\ge \exp\bigg[\(L_{\v_3}+o_{\v_3}(1)\)
    e^{-\gamma} \frac{3y}{4\log y}\cdot \(-\Theta_{y^{4/3},z_1}+\O\(\Theta_{y^{4/3},z_1}y^{-1/10}\)\)\bigg]\\&\ge \exp\(-\(L_{\v_3}+o_{\v_3}(1)\)\lambda\).} Multiplying this probability by the one in \eqref{ir1}, it is immediate from \eqref{cond} that \[\P_{z_1}\(S_{z_1}=0\)\ge \exp\(-\(L_{\v_3}+o_{\v_3}(1)\)\lambda\).\]Together with the estimates in \eqref{U11.1} and~(\ref{U7}), we infer that \[N_{\cA}(x,x';y)\ge x\exp\(-\(L_{\v_3}+o_{\v_3}(1)\)\l\).\]  This, in turn, implies that \[M_{\cA}(x;y)\ge N_{\cA}(x,x';y)/(2\log^{1+\v_2} x)\ge x\exp\(-\(L_{\v_3}+o(1)\)\l\),\] as $x\rightarrow\infty$. Since \[\lim_{{\v_3\rightarrow0+}}L_{\v_3}=f^{+}\big((1+1/c)+\big),\] taking the limit as $\v_3\rightarrow 0^{+}$ yields 
    \[
    M_{\cA}(x;y)\ge x\exp\left[-\bigg(f^{+}\Big(\big(1+1/{c}\big)+\Big)+o(1)\bigg)\l\right],
    \] finishing the proof.\qed

\subsection{Proof of Theorem \ref{thm:HLasymp}}\label{rapid}  We fix $c>0$. Henceforth, all implied constants depend at most on $c$. Assume that \[\cA\text{ satisfies }\mathbf{HL}\left[\log^{1+c} x,\log^{c} x\right].\] Let  $\l $ satisfy  \[\l\ge \l''\text{ with }\l=(\log x)^{o(1)}.\] Denote $x'\defeq x^{1-1/\l^{21/20}}$ and $y'\defeq \log ^{1+c/2} x$. Let $z_1\defeq z(x')$. The proof proceeds similarly to that of Theorem \ref{UK4}. 

The primary quantity of interest is $N_{\cA}(x,x';y)$. Observing that $\cK_0\le \frac1{6}\log^c x$ for large $x$, the first step in the proof is to invoke Lemma \ref{probru} (i) with $k=0$ and $K=\cK_0$, followed by an application of Lemma \ref{Ebo}. We get that \be \label{U11.2} N_{\cA}(x,x';y)=\int_{x'}^{x}\P_{z(t)}\(S_{z(t)}=0\)\,\dd t+ \O\(xe^{-4\l}\)\,+\O\(xe^{-\cK_0\log_2 x}\),\ee where the additional term $x'y^{2K_0+2}$ from Lemma \ref{probru} is absorbed into the first error term.
 By Lemma \ref{lo} and \eqref{decomp}, it is clear that \begin{align*} M_{\cA}(x;y) \ge\frac{N_{\cA}(x,x';y)-N_{\cA}(x,x';y')}{\log ^{1+c/2} x}.\end{align*}As a consequence, we have \alg{M_{\cA}(x;y)\ge \frac{N_{\cA}(x,x';y)}{\log^{1+c/2} x}+\O\bigg(x\exp\(-\(1-o(1)\)f\(1+\frac2{c}\)\log^{c/2} x\)\bigg).} Here, the error term is bounded using Lemma \ref{crux} with $\v_0=c$, $\v_1=c/2$ and $w=x'$. Following the same logic as in Section \ref{lower2}, evaluating the main term in \eqref{U11.2} reduces to bounding the probability $\P_{z_1}\(S_{z_1}=0\)$ from below. To this end, we define \[w\defeq \exp\(\frac1{\sqrt{2}}\sqrt{ \log_2 x\log_3 x}\),\quad \textrm{and}\quad u=(\log y)/\log w.\] By Lemma \ref{maierlem} ($z=w$), we find that there exists a choice of residues $\a=(\a_p\bmod{p}: p \le w)$ such that 
\be\label{l22}
S_{w}\le y\,\Theta_{w}\Big(1-\exp\big[-(1+o(1))u\log u\big]\Big),
\ee for sufficiently large $x$. As before, fix $\cF_w=\a$. We have \be\label{l2}\P_{z_1}\(S_{z_1}=0\)\ge \P_{z_1}\(S_{z_1}=0\mid \cF_w=\a\) 
\cdot \P_{w}\(\cF_w=\a\),\ee where $\P_w(\cF_w=\a)\ge e^{-2w}$ by a crude estimate using prime number theorem. Fix a choice of $\cF_{y^8}$ which extends $\cF_w=\a$. Applying \cite[Lemma 6.3]{ford2025}, it follows that 
\alg{
\P_{z_1}\(S_{z_1}=0\mid \cF_{y^8}\)&=
(1-\Theta_{y^8,z_1})^{S_{y^8}}(1+\O(y^{-5})).
}
For the conditional probability in \eqref{l2}, we may apply the law of total probability and sum over the possible values of $f$ of $\cF_{y^8}$ that extend $\cF_w=\a$. Indeed,
\alg{\P_{z_1}\(S_{z_1}=0\mid \cF_w=\a\)&=\sum_{f}\P_{y^8,z_1}\(S_{z_1}=0\mid \cF_{y^8}=f\)\,\P_{w,y^8}\(\cF_{y^8}=f\mid \cF_w=\a\) 
\\&\ge (1-\O(y^{-5}))\sum_{f}(1-\Theta_{y^8,z_1})^{S_{y^8}}\P_{w,y^8}\(\cF_{y^8}=f\mid \cF_w=\a\)
\\&=(1-\O(y^{-5}))\,\E_{w,y^8}\left[(1-\Theta_{y^8,z_1})^{S_{y^8}}\mid \cF_w=\a\right]
\\&\ge (1-\O(y^{-5}))\,(1-\Theta_{y^8,z_1})^{\E_{w,y^8}\left[S_{y^8}\mid \cF_w=\a\right]},
} where we use Lemma \ref{lem:tower} for the last inequality. We now handle the expectation in the same expression. We may calculate it as 
\alg{
\E_{w,y^8}\left[S_{y^8}\mid \cF_w =\a\right]&=\E_{w,y^8}\left[\sum_{n\in \ccS_w(y)}\mathds{1}_{\ccS_{y^8}}(n)\,\Big|\, \cF_w=\a\right]\\&=\sum_{n\in \ccS_w(y)}\P_{w,y^8}\(n\in \ccS_{y^8}\)=\Theta_{w,y^8}S_{w}\\&
\le y\,\Theta_{y^8}\Big(1-\exp\big[-(1+o(1))u\log u\big]\Big),
}where the last inequality follows from \eqref{l22}. Combining the last two estimates and substituting them into \eqref{l2}, we therefore conclude that 
\be\label{eq:central}
\begin{aligned}
\P_{z_1}\(S_{z_1}=0\)&\ge \exp\bigg[ 
    y\,\Theta_{y^{8}}\Big(1-\exp\big[-(1+o(1))u\log u\big]\Big)\cdot \Big(-\Theta_{y^{8},z_1}+\O\big(\Theta_{y^{8},z_1}^2\big)\Big)\bigg]\cdot e^{-2w}
    \\&=\exp\bigg[-\l\Big(1-\exp\big[-(1+o(1))u\log u\big]\Big)\Big(1+\O\big(1/\l^{21/20}\big)\Big)-2w\bigg].
\end{aligned}
\ee
We now treat the error terms in the above expression. We first note that 
\alg{
u=\frac{\log y}{\log w}=\(\sqrt{2}+o(1)\)\sqrt{\frac{\log_2 x}{\log_3 x}},
}
which further implies that \[u\log u=\(1/\sqrt{2}+o(1)\)\sqrt{\log_2 x\log_3 x}.\]
Therefore, by the lower bound on $\l$ and the sufficiently slow choice of $\eta(x)$ in \eqref{eq:lambda_thresholds},
\[
w=o\(\l\exp\(-(1+o(1))u\log u\)\).
\]
Combining these estimates together, we infer that
\[\P_{z_1}\(S_{z_1}=0\)\ge \exp\bigg[-\l\Big(1-\exp\big[-(1/\sqrt{2}+o(1))\sqrt{\log_2 x\log_3 x}\big]\Big)\bigg].\]
\smallskip
\noindent Substituting this lower bound into the estimate~(\ref{U11.2}), we conclude that \[N_{\cA}(x,x';y)\ge xe^{-\l}\exp\bigg[\l\exp\Big(-\big(1/\sqrt{2}+o(1)\big)\sqrt{\log_2 x\log_3 x}\Big)\bigg]\] for large $x$. This, in turn, implies that \[M_{\cA}(x;y)\ge N_{\cA}(x,x';y)/(2\log^{1+c/2} x)\ge xe^{-\l}\exp\bigg[\l\exp\Big(-\big(1/\sqrt{2}+o(1)\big)\sqrt{\log_2 x\log_3 x}\Big)\bigg],\] for sufficiently large $x$.\qed
\begin{rem}
    The constant $\sqrt{2}$ in Theorem \ref{thm:HLasymp} arises as a limitation of our techniques. To see this, write $s=\log w$. We must have $s<\log \l$. Assuming that the phase transition occurs around $\log \l=\O\(\sqrt{\log_2 x\log_3 x}\)$, we may see that
    \[u\log u\ge (1+o(1))\frac{\log_2x \log_3 x}{2s}.\] Now, for the error terms to be small in our probabilistic calculations, we need that
    \[\log \l \ge s+\frac{\log_2 x\log_3 x}{2s}+o\(\sqrt{\log_2 x\log_3 x}\).\] Since 
    \[s+\frac{\log_2 x\log_3 x}{2s}\ge \sqrt{2\log_2 x\log_3 x},\] our method cannot go below the constant $\sqrt{2}$. Equality occurs at \[s=\frac1{\sqrt{2}}\sqrt{\log_2 x\log_3 x},\] which is what we chose above.
\end{rem}

\section{Concluding remarks and further improvements}\label{6}

As noted in the introduction, our results locate the phase transition for these distributions on the scale 
\[\exp\(\Theta\(\sqrt{\log_2 x\log_3 x}\)\),\] although our current methods do not yet allow us to pinpoint the precise threshold within this scale. We anticipate that an improvement to the ranges in Theorems \ref{thm:HLgaps} and \ref{thm:HLpoisson} will require a more effective treatment of small primes. Currently, the probabilistic techniques utilized here and in \cite{ford2025} are effective only for somewhat larger primes, forcing us to apply the fundamental lemma up to a power of $\l$. Overcoming this limitation remains a major obstacle for future improvements.

\medskip
Conversely, strengthening the breakdown results in Theorems \ref{UK4} and \ref{thm:HLasymp} relies on refining the extremal interval sieve estimates. By appealing to known irregularities in the sieving process, our current bounds in \eqref{ir1} and \eqref{l2} exploit the fact that there exists a single, unusual choice of residue classes yielding a sieved set significantly smaller than its expected size. To push these limits further, it will be necessary to develop such methods capable of generating a large multiplicity of such extreme residue classes.
\bibliographystyle{amsplain}
\bibliography{main}

\end{document}